\newtheorem{theorem}{Theorem}	
\newtheorem{lemma}{Lemma}[section]		
\newtheorem{corollary}{Corollary}		
\newtheorem{proposition}{Proposition}
\newtheorem*{example}{Example}
\newtheorem*{question}{Question}
\begin{document}
\title[From separation center set to the upper and lower bound theorems]
{
From spherical separation center set to the upper and lower bound theorems}
\author{
Huhe Han}
\address{College of Science, Northwest Agriculture and Forestry University, China}
\email{han-huhe@nwafu.edu.cn}
\begin{abstract} 
For any disjoint spherical closed convex set $C_1, C_2$, set
$\mathcal{C}(C_1, C_2)= 
\{
P\in S^{d}| P\cdot Q_1\geq 0\ \mbox{for\ any}\ Q_1\in C_ 1\mbox{and}\ P\cdot Q_2\leq 0\ \mbox{for\ any}\ Q\in C_2
\}$. 
Geometrically, the union $\mathcal{C}(C_1, C_2)\cup \mathcal{C}(C_2, C_1)$ is the set consisting of the center of the hemisphere which separates $C_1, C_2$. 
We call  $\mathcal{C}(C_1, C_2)\cup \mathcal{C}(C_2, C_1)$ 
the separation center set of $C_1, C_2$.
It is known that $\mathcal{C}(C_1, C_2)$ is a spherical convex set. 
In this paper, we first prove that for any spherical closed convex set $C$ there exist disjoint 
spherical closed convex sets $C_1, C_2$ such that $C=\mathcal{C}(C_1, C_2)$. 
We also study the properties of the separation center set of (1) the two disjoint spherical convex set and (2) the finite closed set. The motivation for considering the case (1) is that it is the classical case, and the vertices of a spherical polytope may be considered as the case (2).
A disjoint spherical polytopes pair $(\mathcal{P}_1, \mathcal{P}_2)$ is said to be a
 spherical face-partition pair of $\mathcal{P}$ if $\mathcal{P}$ is the spherical convex hull of the union 
$\mathcal{P}_1\cup -\mathcal{P}_2$ and 
$\mathcal{P}_1$ or -$\mathcal{P}_2$ is a face of $\mathcal{P}$.
The number of spherical face-partition pair $(\mathcal{P}_1, \mathcal{P}_2)$ of $\mathcal{P}$
is denoted by $\#_f\mathcal{P}$. 
For any simple spherical polytope $\mathcal{P}$ with $p$ facets, 
applying the Upper Bound Theorem and the Lower Bound Theorem, 
we provide the upper bound and the lower bound of $\#_f\mathcal{P}$.
\end{abstract}
\subjclass[2020]{52A55, 52A37} 
\keywords{\color{black} Separation center set, spherical finite set, spherical convex body, spherical polytope, spherical polar set, the Upper Bound Theorem, the Lower Bound Theorem} 
\maketitle  
\section{Introduction}
Throughout this paper, all convex sets are assumed closed without otherwise stated.
\par
A polytope $\mathcal{P}$ in $\mathbb{R}^d$ is said to be {\it simplicial} if each proper face of $\mathcal{P}$ is a simplex. 
A polytope $\mathcal{P}$ in $\mathbb{R}^d$ is said to be {\it simple} if each vertex of $\mathcal{P}$ is contained exactly in $d$ facets. 
It is well-known that if $\mathcal{P}_1$ and $\mathcal{P}_2$ are duals polytopes, then 
$\mathcal{P}_1$ is simple if and only if $\mathcal{P}_2$ is simplicial. 
A $d-$polytope $\mathcal{P}$ is said to be {\it $k-$ neighbourly} 
if convex hull of $X$ is a proper face of $\mathcal{P}$ for any $k-$subset $X$ of vertices of $\mathcal{P}$; and 
$\left \lfloor \frac{d}{2} \right \rfloor$-neighbourly $d-$polytope is called {\it neighbourly polytope} (\cite{arne}).
Following \cite{arne}, we define for $j\geq0$
\[
\Phi_j(d,p):=\sum_{i=0}^{n}\binom{i}{j}\binom{p-d+i-1}{j} +\sum_{i=0}^{m}\binom{d-i}{j}\binom{p-d+i-1}{j}.
\]
Here 
$m:=\left \lfloor \frac{d-1}{2} \right \rfloor,n:=\left \lfloor \frac{d}{2} \right \rfloor$;
\[
\varphi_j(d,p):=
\binom{d}{j+1} p-\binom{d+1}{j+1} (d-1-j),\ \ \ \ \ \ \ j=1,\dots,d-2.
\]
Then the Upper Bound Theorem and the Lower Bound Theorem may be stated as follows (\cite{arne}):
\begin{theorem}[The Upper Bound Theorem]\label{UBT}
For any simplicial $d-$polytope $\mathcal{P}$ with $p$ vertices, the following inequality holds:
\[
f_j(\mathcal{P})\leq \Phi_{d-1-j}(d, p),\ \ \ j=1,\dots,d-1. 
\]
If $\mathcal{P}$ is dual neighbourly , then 
\[
f_j(\mathcal{P})= \Phi_{d-1-j}(d, p),\ \ \ j=1,\dots,d-1.
\]
If $\mathcal{P}$ is not dual neighbourly , then 
\[
f_j(\mathcal{P})<\Phi_{d-1-j}(d, p),\ \ \ j=1,\dots,m+1.
\]
(and possibly also for larger values of $j$).
\end{theorem}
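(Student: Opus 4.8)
This is the Upper Bound Theorem, due to McMullen, and the proof I would give is essentially his, organised around the $h$-vector and shellability; the supporting facts are all assembled in \cite{arne}. The first move is to replace the face numbers of $\mathcal P$ by the $h$-vector of the boundary complex $\partial\mathcal P$: setting $\sum_{i=0}^{d}h_i\,t^{d-i}=\sum_{k=0}^{d}f_{k-1}(\mathcal P)(t-1)^{d-k}$ one obtains $h_0=1$, $h_1=p-d$, and the inversion $f_{k-1}(\mathcal P)=\sum_{i=0}^{k}\binom{d-i}{k-i}h_i$, whose coefficients are nonnegative. Using the Dehn--Sommerville relations $h_i=h_{d-i}$, I would fold this expansion so that, for each $j$,
\[
f_j(\mathcal P)=\sum_{i=0}^{\lfloor d/2\rfloor}c_{ij}\,h_i,\qquad c_{ij}=\binom{d-i}{d-1-j}+\binom{i}{d-1-j},
\]
with all $c_{ij}\ge 0$, the two summands coinciding into one when $d$ is even and $i=d/2$. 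By the computation of the face numbers of neighbourly polytopes recalled in \cite{arne}, $\Phi_{d-1-j}(d,p)$ is exactly the value of this expression when $h_i=\binom{p-d+i-1}{i}$ for all $0\le i\le\lfloor d/2\rfloor$ (the $h$-vector of any neighbourly polytope). So the whole theorem is reduced to controlling the first half of the $h$-vector.

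The core step --- and the one I expect to be the real obstacle --- is the estimate
\[
h_i(\partial\mathcal P)\le\binom{p-d+i-1}{i}\qquad(0\le i\le\lfloor d/2\rfloor).
\]
Here I would use that $\partial\mathcal P$ is shellable (Bruggesser--Mani line shellings). Fixing a line shelling $F_1,\dots,F_s$ of $\partial\mathcal P$ and recalling the identity $h_i=\#\{\,k:\ |\mathcal R(F_k)|=i\,\}$, where $\mathcal R(F_k)$ is the restriction (minimal new) face added with $F_k$, the problem becomes combinatorial: choosing the sweeping hyperplane generically, one argues that a restriction face of cardinality $i\le\lfloor d/2\rfloor$ avoids the vertices first met by the sweep and, since its ambient facet is a simplex, is an $i$-element subset of a pool of only $p-d+i-1$ vertices, so there are at most $\binom{p-d+i-1}{i}$ of them. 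Carrying out this restriction-face bookkeeping precisely is the delicate part of the argument. (An alternative I would keep in reserve: $\partial\mathcal P$ is a simplicial sphere, hence Cohen--Macaulay by Reisner's criterion; reducing its Stanley--Reisner ring modulo a linear system of parameters gives an Artinian graded algebra generated by $h_1=p-d$ elements of degree one whose Hilbert function is the $h$-vector, and Macaulay's bound yields precisely $h_i\le\binom{p-d+i-1}{i}$.)

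Granting the $h$-inequality, the upper bound is immediate: $f_j(\mathcal P)=\sum_i c_{ij}h_i\le\sum_i c_{ij}\binom{p-d+i-1}{i}=\Phi_{d-1-j}(d,p)$ for $j=1,\dots,d-1$. For the equality clause, every $c_{ij}$ with $0\le i\le\lfloor d/2\rfloor$ is strictly positive for at least one admissible $j$, so $f_j(\mathcal P)=\Phi_{d-1-j}(d,p)$ for all $j$ forces $h_i=\binom{p-d+i-1}{i}$ throughout $0\le i\le\lfloor d/2\rfloor$; reading this back through $f_{i-1}(\mathcal P)=\binom{p}{i}$ says exactly that $\mathcal P$ is (dual) neighbourly, and the cyclic polytopes $C(p,d)$ show the bound is attained. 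Finally, if $\mathcal P$ is not (dual) neighbourly, let $\ell$ be the least index with $h_\ell<\binom{p-d+\ell-1}{\ell}$ (so $2\le\ell\le\lfloor d/2\rfloor$, since $h_0,h_1$ are always extremal); as $c_{\ell j}\ge\binom{d-\ell}{d-1-j}$ is positive precisely for $\ell-1\le j\le d-1$, one gets $f_j(\mathcal P)<\Phi_{d-1-j}(d,p)$ on that whole range, which, by the meaning of dual neighbourliness in \cite{arne}, includes $j=1,\dots,m+1$, with still larger $j$ possible when the $h$-vector carries further slack.
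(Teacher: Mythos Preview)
The paper does not prove the Upper Bound Theorem. It is stated in the introduction as a known result, attributed to McMullen \cite{mcmullen} (with the formulation taken from Br{\o}ndsted \cite{arne}), and is then invoked as a black box in the proof of Theorem~\ref{theoremulb}. There is no proof in the paper to compare your proposal against.

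For what it is worth, your sketch follows the standard modern presentation of McMullen's argument --- pass to the $h$-vector, use Dehn--Sommerville to fold, and bound $h_i$ for $i\le\lfloor d/2\rfloor$ --- and the overall architecture, including the treatment of the equality and strict-inequality clauses, is correct. One point to tighten: your account of how shellability yields $h_i\le\binom{p-d+i-1}{i}$, via restriction faces sitting inside a ``pool of only $p-d+i-1$ vertices'', is not how the bound is actually established. McMullen's argument (and the shelling version one finds in standard references) proceeds instead through the recursive inequality $h_{i+1}\le\frac{p-d+i}{i+1}\,h_i$ for $i<\lfloor d/2\rfloor$, proved by double counting over vertex links, and then iterates. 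Your Cohen--Macaulay alternative via the Stanley--Reisner ring and Macaulay's bound is clean and does give the inequality directly.
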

\begin{theorem}[The Lower Bound Theorem]\label{LBT}
For any simplicial $d-$polytope $\mathcal{P}$ with $p$ vertices, we have
\[
f_j(\mathcal{P})\geq \varphi_{d-1-j}(d, p),\ \ \ j=1,\dots,d-1. 
\]
Moreover, there are simple d-polytopes $P$ with $p$ vertices such that 
\[
f_j(\mathcal{P})=\varphi_{d-1-j}(d, p),\ \ \ j=1,\dots, d-1. 
\]
\end{theorem}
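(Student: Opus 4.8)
The inequality is the classical Lower Bound Theorem of Barnette, and the plan is to reduce it to positivity of the $g$-vector. I would first pass from the face numbers to the $h$-vector $(h_0,\dots,h_d)$ of the simplicial $d$-polytope $\mathcal{P}$, which satisfies $f_{k-1}(\mathcal{P})=\sum_{i=0}^{d}\binom{d-i}{d-k}h_i$ together with the Dehn--Sommerville relations $h_i=h_{d-i}$; in particular $h_0=h_d=1$ and $h_1=f_0(\mathcal{P})-d=p-d$. Every coefficient $\binom{d-i}{d-k}$ is nonnegative, so each $f_{k-1}(\mathcal{P})$ is nondecreasing in each $h_i$, and a routine binomial identity shows that substituting the minimal feasible values $h_1=\dots=h_{d-1}=p-d$, $h_0=h_d=1$ into the formula for $f_{k-1}$ returns exactly $\varphi_{d-1-j}(d,p)$ (the facet case $j=d-1$ being $f_{d-1}=\sum_i h_i$, handled the same way). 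Thus the whole family of inequalities reduces to the single assertion $h_i\ge h_1$ for $1\le i\le d-1$.

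To obtain $h_i\ge h_1$ I would use the Dehn--Sommerville symmetry to reduce to $1\le i\le\lfloor d/2\rfloor$ and then invoke $g_i:=h_i-h_{i-1}\ge 0$ in that range. I expect the proof of this to be the main obstacle, and there are two standard routes. One may quote the necessity half of the $g$-theorem, which yields $g_i\ge 0$ for all $1\le i\le\lfloor d/2\rfloor$ at once but ultimately rests on the hard Lefschetz theorem. Alternatively --- more in the spirit of a self-contained argument --- it already suffices to prove the single inequality $g_2\ge 0$, i.e.\ $f_1(\mathcal{P})\ge d\,f_0(\mathcal{P})-\binom{d+1}{2}$, for simplicial polytopes in every dimension: the remaining LBT inequalities then follow by the McMullen--Perles--Walkup reduction, an induction on dimension that passes to the vertex figures of $\mathcal{P}$ (each again a simplicial polytope). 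And $g_2\ge 0$ has Kalai's rigidity proof: the graph of a simplicial $d$-polytope is generically $d$-rigid --- seen by building its boundary complex up from that of the $d$-simplex by operations preserving generic $d$-rigidity --- so its generic rigidity matrix attains the maximal rank $d\,f_0-\binom{d+1}{2}$, forcing $f_1\ge d\,f_0-\binom{d+1}{2}$.

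For the equality statement, the stacked $d$-polytopes realise the bound (the word \emph{simple} in the statement should be read as \emph{simplicial}, since these extremal examples are simplicial). The boundary of the $d$-simplex has $h$-vector $(1,1,\dots,1)$, and each stacking operation --- glue a new $d$-simplex onto a facet --- adds one vertex and raises each of $h_1,\dots,h_{d-1}$ by exactly $1$ while keeping $h_0=h_d=1$. After $p-d-1$ stackings one obtains a simplicial $d$-polytope on $p$ vertices with $h$-vector $(1,p-d,\dots,p-d,1)$; plugging this into $f_{k-1}=\sum_i\binom{d-i}{d-k}h_i$ gives $f_j(\mathcal{P})=\varphi_{d-1-j}(d,p)$ for every $j$, so equality holds throughout.
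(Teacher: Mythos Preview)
The paper does not actually prove this statement: the Lower Bound Theorem is quoted in the Introduction as a known result and attributed to Barnette (references \cite{barnette1,barnette2} in the paper), and is then used as a black box in the proof of Theorem~\ref{theoremulb}. There is therefore no ``paper's own proof'' to compare against.

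That said, your sketch is a correct outline of a modern proof and is genuinely different from Barnette's original argument. Barnette worked directly with facet systems and combinatorial manipulations on simple/simplicial polytopes, without the $h$-vector machinery. Your route --- rewrite $f_j$ in terms of the $h$-vector, reduce all the inequalities to $h_i\ge h_1$ via Dehn--Sommerville, and then establish $g_2\ge 0$ by Kalai's rigidity argument together with the McMullen--Perles--Walkup reduction --- is the now-standard streamlined approach. It has the advantage of isolating a single nontrivial inequality ($g_2\ge 0$) and explaining conceptually why stacked polytopes are extremal (they are exactly the case $h_1=\cdots=h_{d-1}$). Your observation that the word \emph{simple} in the equality clause should read \emph{simplicial} is also correct: the extremal examples are the stacked polytopes, which are simplicial.

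One small caution: the step ``the remaining LBT inequalities then follow by the McMullen--Perles--Walkup reduction'' hides real work (an induction through vertex figures and a careful double count), and in a full write-up you would need to spell this out rather than cite it as routine. Likewise, the assertion that stacking raises each of $h_1,\dots,h_{d-1}$ by exactly one is true but deserves a one-line justification via the effect of a stellar subdivision on the $h$-polynomial.
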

The Upper Bound Theorem and the Lower Bound Theorem are main achievements in the modern theory of convex polytopes, they were proved by McMullen (\cite{mcmullen}) and Barnette (\cite{barnette1, barnette2}) respectively.  
More details on the Upper Bound Theorem, the Lower Bound Theorem and their related topics see for instance \cite{bron,ewald, matousek, novik}.
\par
Let $S^d$ be the unit sphere of $\mathbb{R}^{d+1}$ and
let $C_1, C_2$ of $S^{d}$ be two disjoint spherical closed convex sets. It is known that the set 
\[
\mathcal{C}(C_1, C_2)= 
\{
P\in S^{d}| P\cdot Q_1\geq 0\ \mbox{for\ any}\ Q\in C_ 1\mbox{and}\ P\cdot Q_2\leq 0\ \mbox{for\ any}\ Q\in C_2
\}
\]
is a spherical convex set(\cite{hannishimura2020, zalinescu}), 
where $P\cdot Q$ stands for the standard scalar product 
for two vectors $P, Q\in \mathbb{R}^{d+1}$.  
Geometrically, $\mathcal{C}(C_1, C_2)\cup -\mathcal{C}(C_1, C_2)$
is the set consisting of 
the center of the hemisphere which separates $C_1$ and $C_2$. 
It is natural to ask: for any given spherical closed convex set $C$, are there exist disjoint spherical convex sets 
$C_1, C_2$ such that $C=\mathcal{C}(C_1, C_2)$? 
In this paper, we give an affirmative answer to this question (Proposition \ref{recog}), and provid the largest disjoint convex sets pair separated by $C$ (Corollary \ref{corolargest}). 
Especially, if $C$ is a spherical polytope then $C= \mathcal{C}(\mathcal{P}_1, \mathcal{P}_2)$ if and only if 
$C$ is the spherical convex hull of $\mathcal{P}_1\cup -\mathcal{P}_2$, where $\mathcal{P}_1, \mathcal{P}_2$ are disjoint spherical polytopes (Theorem \ref{polytopesepa}).
A disjoint spherical polytopes pair $(\mathcal{P}_1, \mathcal{P}_2)$ is said to be a
{\it spherical face-partition pair} of $\mathcal{P}$ if $\mathcal{P}$ is the spherical convex hull of the union 
$\mathcal{P}_1\cup -\mathcal{P}_2$ and 
$\mathcal{P}_1$ or -$\mathcal{P}_2$ is a face of $\mathcal{P}$.
The {\it sum of the spherical face-partition pair } $(\mathcal{P}_1, \mathcal{P}_2)$ of $\mathcal{P}$
is denoted by $\#_f\mathcal{P}$.
Then combine the central projection and the Upper and Lower Bound Theorems, we have the following:
\begin{theorem}\label{theoremulb}
Let $\mathcal{P}$ be a simple spherical polytope with $p$ facets. 
Then we have the following inequality,
\[
2\left(p+\sum_{j=1}^{d-1}\varphi_{d-1-j}(d,p)\right)\leq\#_f\mathcal{P}\leq 2 
\left(p+\sum_{j=1}^{d-1}\Phi_{d-1-j}(d,p)\right).
\]
\end{theorem}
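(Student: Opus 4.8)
The plan is to split the proof into a purely combinatorial counting identity and a routine application of Theorems~\ref{UBT} and~\ref{LBT}. \emph{Step 1.} First I would establish that $\#_f\mathcal{P}=2\sum_{k=0}^{d-1}f_k(\mathcal{P})$, i.e.\ that $\#_f\mathcal{P}$ equals twice the number of nonempty proper faces of $\mathcal{P}$. By Theorem~\ref{polytopesepa} a disjoint pair $(\mathcal{P}_1,\mathcal{P}_2)$ with $\mathcal{P}$ the spherical convex hull of $\mathcal{P}_1\cup-\mathcal{P}_2$ is precisely a pair with $\mathcal{P}=\mathcal{C}(\mathcal{P}_1,\mathcal{P}_2)$, and Corollary~\ref{corolargest} isolates the largest such pair. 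The intended correspondence sends a face-partition pair to the datum consisting of the nonempty proper face $F$ of $\mathcal{P}$ occurring among $\mathcal{P}_1,-\mathcal{P}_2$ together with the binary choice of which of the two it is; conversely, once $F$ and this choice are prescribed, the remaining member of the pair is forced to be the matching member of the largest pair furnished by Corollary~\ref{corolargest}. Carrying this out rigorously is the main obstacle, and the points needing care are: that the companion of a prescribed face $F$ is uniquely determined and disjoint from $F$; and that this companion is itself never a face of $\mathcal{P}$, so that the families ``$\mathcal{P}_1$ is a face'' and ``$-\mathcal{P}_2$ is a face'' are disjoint and the multiplicity is exactly $2$. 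Granting this, the claimed identity follows because the nonempty proper faces of $\mathcal{P}$ have dimensions $0,\dots,d-1$.

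\emph{Step 2.} Next I would pass to the spherical polar set $\mathcal{P}^{*}$. Since $\mathcal{P}$ is a simple spherical polytope with $p$ facets, $\mathcal{P}^{*}$ is a simplicial spherical polytope with $p$ vertices, and spherical polarity reverses the face lattice, so $f_j(\mathcal{P}^{*})=f_{d-1-j}(\mathcal{P})$ for all $j$; moreover central projection identifies $\mathcal{P}^{*}$ with a Euclidean simplicial $d$-polytope with $p$ vertices and the same $f$-vector. Re-indexing, $\sum_{j=1}^{d-1}f_j(\mathcal{P}^{*})=\sum_{k=0}^{d-2}f_k(\mathcal{P})$, and adding $p=f_{d-1}(\mathcal{P})$ to both sides together with Step~1 gives
\[
2\Bigl(p+\sum_{j=1}^{d-1}f_j(\mathcal{P}^{*})\Bigr)=2\sum_{k=0}^{d-1}f_k(\mathcal{P})=\#_f\mathcal{P}.
\]

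\emph{Step 3.} Finally, applying the Upper and Lower Bound Theorems (Theorems~\ref{UBT} and~\ref{LBT}) to the simplicial $d$-polytope $\mathcal{P}^{*}$ with $p$ vertices gives $\varphi_{d-1-j}(d,p)\le f_j(\mathcal{P}^{*})\le\Phi_{d-1-j}(d,p)$ for $j=1,\dots,d-1$. Summing these inequalities over $j$, adding $p$, and doubling, the identity of Step~2 converts them into
\[
2\Bigl(p+\sum_{j=1}^{d-1}\varphi_{d-1-j}(d,p)\Bigr)\le\#_f\mathcal{P}\le 2\Bigl(p+\sum_{j=1}^{d-1}\Phi_{d-1-j}(d,p)\Bigr),
\]
which is the asserted inequality; equality on the right holds in particular when $\mathcal{P}^{*}$ is neighbourly, e.g.\ for a spherical simplex.
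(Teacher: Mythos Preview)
Your three-step architecture---reduce $\#_f\mathcal{P}$ to twice the total number of proper faces of $\mathcal{P}$, pass to the simplicial polar $\mathcal{P}^{*}=\mathcal{P}^\circ$, centrally project, and apply Theorems~\ref{UBT} and~\ref{LBT}---is exactly the paper's strategy, and your Steps~2 and~3 are carried out correctly and in fact more cleanly than in the paper.

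Step~1, however, contains two concrete errors beyond the gaps you flag. First, the relation you attribute to Theorem~\ref{polytopesepa} is off by a polar: by Proposition~\ref{propo1}(6) the condition $\mathcal{P}=\mbox{s-conv}(\mathcal{P}_1\cup-\mathcal{P}_2)$ is equivalent to $\mathcal{P}^{\circ}=\mathcal{C}(\mathcal{P}_1,\mathcal{P}_2)$, not to $\mathcal{P}=\mathcal{C}(\mathcal{P}_1,\mathcal{P}_2)$. Corollary~\ref{corolargest} then only yields $\mathcal{P}_1,-\mathcal{P}_2\subset(\mathcal{P}^{\circ})^{\circ}=\mathcal{P}$, which you already knew, and it does \emph{not} single out a unique companion for a prescribed face $F$. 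Second, your assertion that the companion ``is itself never a face of $\mathcal{P}$'' is false for simple polytopes: in a spherical triangular prism the two triangular facets partition the vertex set and together span the prism, so the pair $(\mathcal{P}_1,\mathcal{P}_2)=(F_1,-F_2)$ has \emph{both} $\mathcal{P}_1$ and $-\mathcal{P}_2$ equal to facets. The map you propose from face-partition pairs to pairs (proper face, binary slot) is therefore not well defined, and the two families you want disjoint are not.

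The paper does not attempt your bijection. It obtains the factor $2$ from the fixed-point-free involution $(\mathcal{P}_1,\mathcal{P}_2)\mapsto(-\mathcal{P}_2,-\mathcal{P}_1)$ on face-partition pairs (via Proposition~\ref{propo1}(1)), and it identifies the orbit count with the total face number of $\alpha_N(\mathcal{P}^{\circ})$ using the convex-set-mapping machinery of Lemma~\ref{nuberinequality} applied to the central projection $\alpha_N$ and its inverse. If you want to salvage your Step~1, replace the ``companion is forced and never a face'' mechanism by this involution argument; the rest of your outline then goes through.
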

\bigskip
This paper organized as follows. 
In Section 2, the preliminaries are given.
In Section 3, there are two cases of the separation center sets are considered.
Namely, (i) the separation center set of two disjoint spherical convex sets and (ii) the separation center set of finite sets. 
The case (i) is the classic case, and the vertices of a spherical polytope may be considered as the case (ii). 
In Section 4, some results between spherical polytopes and the separation center set of spherical polytopes are given.
The proof of Theorem \ref{theoremulb} is given in Section 5. 

\section{preliminaries}
\subsection{Spherical polytopes}
Let $H(P)$ be the set consisting of $Q\in S^d$ satisfying $P\cdot Q\geq 0$.   
The central projection 
$\alpha_{{}_P}: S^d\backslash H(-P)\to P+T_{{}_{P}}(S^d)$, defined by
\[
\alpha_{{}_{P}}(Q)=
\frac{1}{P\cdot Q}Q
\]
for any $Q$ such that $P\cdot Q >0$,
where $
T_{{}_{P}}(S^d)
=
\{Q\in \mathbb{R}^{n+1}\mid Q\cdot P=0
\}
$
is the tangent vector space of $S^d$ at $P$ 
and 
$P+T_{{}_{P}}(S^d)$ is the tangent affine space of $S^d$ at $P$ defined by 
$\left\{P+x\; \left|\; x\in T_{{}_{P}}(S^d)\right.\right\}$. 
A spherical polytope $\widetilde{\mathcal{P}}$ is said to be 
{\it simplicial} if 
the polytope $\alpha_M(\widetilde{\mathcal{P}})$ is simplicial in $M+T_{{}_{M}}(S^d)$;
a spherical polytope $\widetilde{\mathcal{P}}$ is said to be 
{\it simple} if 
the polytope $\alpha_M(\widetilde{\mathcal{P}})$ is simple in $M+T_{{}_{M}}(S^d)$,
where $M$ is an (relative) interior point of $\widetilde{\mathcal{P}}$.  
\par 
A subset $W$ of $S^d$ is said to be 
\textit{hemispherical} if there exists a point $P\in S^d$ 
such that 
\[
W\cap H(P)=\emptyset .
\]
A hemispherical subset $W$ of $S^d$ is said to be \textit{spherical convex} 
if for any 
$P, Q\in W$ and any $t\in [0,1]$ 
the unit vector 
\[
\frac{tP+(1-t)Q}{||tP+(1-t)Q||}
\]
is contained in $W$.  
Here, $|| \cdot ||$ denotes the standard $(d+1)-$dimensional Euclidean norm. 
Since $W$ is hemispherical, the denominator $||tP+(1-t)Q||$ always positive.
Let $\{W_i\}_{i=1}^m$ be a finite closed sets of $S^d$ such that $W_i\cap W_j=\emptyset$ 
for any $i, j\in \{1, \dots, m\}$,
and let $(I, J)$ be a {\it partition} of $\{1,\dots, m\}$, namely, 
$I$ and $J$ are nonempty subsets of $\{1,\dots, m\}$ such that $I\cap J =\emptyset $ 
and $I\cup J=\{1,\dots, m\}$. 
A point $P$ of $S^d$ is said to be a 
${\it separation \ center} $ of $\{W_i\}_{i=1}^m$ if there exists a partition $(I, J)$ of $\{1,\dots, m\}$ such that 
\[ 
P\cdot Q\geq 0\ \mbox{for\ any}\ Q\in W_i, i\in I\ \mbox{and}\ P\cdot R\leq 0\ \mbox{for\ any}
\ R\in W_j, j\in J.
\]
Then it is clear that if $P$ is a separation center of $\{W_i\}_{i=1}^m$ then is so for $-P$.
The set consisting of separation centers of 
$\{W_i\}_{i=1}^m$ is called {\it the separation center set} of $\{W_i\}_{i=1}^m$.
By definition it is clear that separation center set is a spherical central symmetry set. 
In particular,
the subset 
\[
\{P\in S^d\mid P\cdot Q\geq 0\ \mbox{for\ any}\ Q\in W_1\ \mbox{and}\ P\cdot R\leq 0\ \mbox{for\ any}
\ R\in W_2\},
\]
of the separation center set of two spherical closed sets 
$W_1$ and $W_2$ 
is called {\it semi-separation center set} of $W_1, W_2$, 
denoted by $\mathcal{C}(W_1, W_2)$.
Therefore the union 
\[
\mathcal{C}(W_1, W_2)\cup\mathcal{C}(W_2, W_1)
\]
 is the separation center set of 
$W_1, W_2$. 
%
 \subsection{Spherical polar sets}
 For any nonempty hemispherical set $W$, the 
{\it spherical polar set} of $W$ is the following spherical convex set, 
denoted by $W^{\circ}$,
\[
\bigcap_{P\in W}H(P).
\] 
It is clear that $W^\circ$ is always nonempty convex set. 
\begin{lemma}\label{lemmaobvi}
The following equality holds for any $P\in S^d$ and $0<\varepsilon<
\pi/2$,
\[
B(H(P),\varepsilon)=\bigcup_{P_1\in B(P, \varepsilon)}H(P_1).
\]
\end{lemma}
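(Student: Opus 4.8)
The plan is to translate both sides into statements about the intrinsic spherical metric $\rho(Q,R)=\arccos(Q\cdot R)$ on $S^d$ and to recognize each side as a single metric ball centred at $P$. The starting observation is that $H(P)$ is exactly the closed metric ball of radius $\pi/2$ about $P$, since $P\cdot Q\ge 0$ if and only if $\rho(P,Q)\le \pi/2$. From this one obtains the distance formula $\rho(Q,H(P))=\max\{0,\ \rho(Q,P)-\pi/2\}$, valid for every $Q\in S^d$: if $\rho(Q,P)\le \pi/2$ then $Q\in H(P)$ and the distance is $0$, while if $\rho(Q,P)>\pi/2$ the nearest point of $H(P)$ to $Q$ lies on a minimizing geodesic from $Q$ to $P$, at the place where that geodesic meets the boundary sphere $\{R\mid P\cdot R=0\}$, and the distance to it is $\rho(Q,P)-\pi/2$. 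Hence $Q\in B(H(P),\varepsilon)$ if and only if $\rho(Q,P)<\pi/2+\varepsilon$; that is, the left-hand side equals the ball $B(P,\pi/2+\varepsilon)$.

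Next I would show the right-hand side equals the same ball, by double inclusion. For $\bigcup_{P_1\in B(P,\varepsilon)}H(P_1)\subseteq B(H(P),\varepsilon)$: if $Q\in H(P_1)$ with $\rho(P,P_1)<\varepsilon$, then either $Q\in H(P)\subseteq B(H(P),\varepsilon)$, or $\rho(Q,H(P))=\rho(Q,P)-\pi/2\le \rho(Q,P_1)+\rho(P_1,P)-\pi/2<\pi/2+\varepsilon-\pi/2=\varepsilon$ by the triangle inequality together with $\rho(Q,P_1)\le\pi/2$; in both cases $Q\in B(H(P),\varepsilon)$. For the reverse inclusion, take $Q$ with $\rho(P,Q)<\pi/2+\varepsilon$. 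If $\rho(P,Q)\le\pi/2$ then $Q\in H(P)$ and $P\in B(P,\varepsilon)$, so $Q$ lies in the union. Otherwise choose $\delta$ with $\rho(P,Q)-\pi/2<\delta<\varepsilon$, which is possible precisely because $\rho(P,Q)-\pi/2<\varepsilon$, and let $P_1$ be the point at distance $\delta$ from $P$ along the minimizing geodesic toward $Q$; since $\delta<\varepsilon<\pi/2<\rho(P,Q)$ this point is well defined, and then $\rho(P_1,Q)=\rho(P,Q)-\delta<\pi/2$ gives $Q\in H(P_1)$ while $\rho(P,P_1)=\delta<\varepsilon$ gives $P_1\in B(P,\varepsilon)$. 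Combining the two inclusions with the previous paragraph yields the claimed equality.

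The only points requiring care are the distance formula $\rho(Q,H(P))=\max\{0,\rho(Q,P)-\pi/2\}$ and the geodesic-interpolation step in the last inclusion; both are standard facts about metric balls in $S^d$ and both use the hypothesis $\varepsilon<\pi/2$, which keeps every distance involved strictly below $\pi$, so that minimizing geodesics are unique and lengths add along them. If one prefers to avoid metric language, the same two facts can be extracted from the explicit parametrization $P_1=(\cos\delta)P+(\sin\delta)V$ with $V$ a unit vector orthogonal to $P$ lying in the plane spanned by $P$ and $Q$, followed by a direct computation of $P_1\cdot Q$; but the metric formulation keeps the bookkeeping shortest. I expect the verification of the distance formula to be the main (and only mildly delicate) obstacle, since it is the one place where the geometry of the boundary sphere genuinely enters.
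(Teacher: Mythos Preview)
Your proof is correct and follows essentially the same route as the paper's: both directions rest on the triangle inequality for the spherical metric and on choosing a point along the minimizing geodesic from $P$ to $Q$, exactly as the paper does. Your version is somewhat more explicit in that you first isolate the distance formula $\rho(Q,H(P))=\max\{0,\rho(Q,P)-\pi/2\}$ and identify $B(H(P),\varepsilon)$ with the single ball $B(P,\pi/2+\varepsilon)$, whereas the paper uses these facts implicitly; but the underlying argument is the same.
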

\begin{proof}
Let $Q\in B(H(P), \varepsilon)$. 
If $Q\in H(P)$, it is clear that $Q\in \cup_{P_1 \in B(P, \varepsilon)}H(P_1)$. 
If $Q\notin H(P)$, set $r=\arccos(P\cdot Q)$, then $\pi/2<r<\pi/2+\varepsilon$.
Set $R=PQ\cap \partial\overline{B(P, r-\pi/2)}$. 
Then it follows that 
\[
\arccos(Q\cdot R)=\arccos(P\cdot Q)-\arccos(R\cdot P)=r-(r-\frac{\pi}{2})=\frac{\pi}{2}.
\]
This implies $Q\in H(R)$. 
Since $R\in \partial\overline{B(P, r-\pi/2)}\subset B(P, \varepsilon)$, 
we have that $Q\in \cup_{P_1\in B(P, \varepsilon)}H(P_1).$ 
Conversely, let $Q\in \cup_{P_1\in B(P, \varepsilon)}H(P_1)$. 
Then there exists a point $R\in B(P, \varepsilon)$ such that $Q\in H(R)$. 
This means that 
\[
\arccos(P\cdot Q)\leq \arccos(P\cdot R)+\arccos(R\cdot Q)\leq \varepsilon+\frac{\pi}{2}.
\]
Therefore, $\cup_{P_1\in B(P, \varepsilon)}H(P_1)$ is a subset of $B(H(P),\varepsilon)$.
\end{proof}
The following properties of spherical polar sets are needed in later sections.
\begin{lemma}[\cite{nishimurasakemi2}]\label{lemmasconv}
Let $W$ be nonempty hemispherical subsets of $S^{d}$.        
Then, the equality $W^{\circ}= \left(\mbox{\rm s-conv}(W)\right)^{\circ}$ holds, 
where $\mbox{\rm s-conv}(W)$ is the spherical convex hull of $W$, namely,
\[
\mbox{\rm s-conv}(W)=
\left\{
\frac{\sum_{i=1}^m \lambda_iP_i}{||\sum_{i=1}^m \lambda_iP_i||}\in S^d\mid \sum_{i=1}^m \lambda_i=1, P_i\in W
\right\}.
\]

\end{lemma}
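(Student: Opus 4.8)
The plan is to prove the two inclusions $(\mbox{s-conv}(W))^{\circ} \subseteq W^{\circ}$ and $W^{\circ} \subseteq (\mbox{s-conv}(W))^{\circ}$ separately; together they yield the claimed equality. Both rest only on unwinding the definition $W^{\circ} = \{Q \in S^d \mid P \cdot Q \geq 0 \ \mbox{for\ any}\ P \in W\}$ and on the explicit description of $\mbox{s-conv}(W)$ recorded in the statement.

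For the first inclusion I would exploit the inclusion-reversing behaviour of the polar operation. Since every point of $W$ is trivially a spherical convex combination of points of $W$, we have $W \subseteq \mbox{s-conv}(W)$. Consequently, if $Q \in (\mbox{s-conv}(W))^{\circ}$ then $P \cdot Q \geq 0$ for every $P \in \mbox{s-conv}(W)$, and in particular for every $P \in W$; hence $Q \in W^{\circ}$. This direction is immediate.

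The content lies in the reverse inclusion. I would fix $Q \in W^{\circ}$, so that $P_i \cdot Q \geq 0$ for every $P_i \in W$, and then take an arbitrary point $P = \frac{\sum_{i=1}^m \lambda_i P_i}{\|\sum_{i=1}^m \lambda_i P_i\|}$ of $\mbox{s-conv}(W)$, with $P_i \in W$ and nonnegative coefficients $\lambda_i$ summing to one, and compute
\[
P \cdot Q = \frac{\sum_{i=1}^m \lambda_i (P_i \cdot Q)}{\left\| \sum_{i=1}^m \lambda_i P_i \right\|}.
\]
The numerator is a nonnegative combination of the nonnegative quantities $P_i \cdot Q$, hence is $\geq 0$; the denominator is strictly positive because $W$ is hemispherical, so that each spherical convex combination of its points is again a genuine unit vector and the underlying Euclidean sum never vanishes. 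Therefore $P \cdot Q \geq 0$, i.e. $Q \in H(P)$. As $P$ ranges over all of $\mbox{s-conv}(W)$, this gives $Q \in (\mbox{s-conv}(W))^{\circ}$.

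The only step requiring genuine care — the main obstacle, though a mild one — is the positivity of the normalizing denominator: I must invoke the hemisphericity of $W$ to guarantee that $\sum_{i=1}^m \lambda_i P_i \neq 0$, so that the quotient defining $P \cdot Q$ is well defined and its sign is controlled entirely by the numerator. Everything else reduces to a direct sign computation.
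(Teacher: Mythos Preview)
Your argument is correct. The two-inclusion strategy is the natural one, and both halves are executed soundly: the first inclusion follows from $W\subseteq\mbox{s-conv}(W)$ together with the inclusion-reversing property of the polar operation, and for the second you correctly reduce the sign of $P\cdot Q$ to that of a nonnegative combination of nonnegative terms, using hemisphericity only to guarantee the normalizing denominator does not vanish.

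As for comparison with the paper: the paper does not supply its own proof of this lemma. It is quoted from \cite{nishimurasakemi2} and stated without argument, so there is nothing to compare against. Your write-up stands on its own as a complete and standard proof of the result.
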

\begin{lemma}\label{lemma2.1}
Let $W$ be a nonempty hemispherical set of $S^d$.
Then $(-W)^\circ=-W^\circ$.
\end{lemma}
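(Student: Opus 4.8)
The plan is to compute $(-W)^\circ$ directly from the definition of the spherical polar set and reduce everything to the elementary identity $H(-P)=-H(P)$. First note that if $W$ is hemispherical then so is $-W$ (indeed, if $W\cap H(P)=\emptyset$ then $-W\cap H(-P)=\emptyset$), so both $W^\circ$ and $(-W)^\circ$ are well-defined nonempty spherical convex sets, and $-W^\circ$ is again a subset of $S^d$. By the definition of the polar set,
\[
(-W)^\circ=\bigcap_{P\in -W}H(P)=\bigcap_{P\in W}H(-P),
\]
so the whole statement hinges on understanding $H(-P)$ in terms of $H(P)$.

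For this, I would check that $H(-P)=-H(P)$ as subsets of $S^d$: a point $Q$ lies in $H(-P)$ iff $(-P)\cdot Q\geq 0$, equivalently $P\cdot(-Q)\geq 0$, i.e. $-Q\in H(P)$, i.e. $Q\in -H(P)$. Since the antipodal map $Q\mapsto -Q$ is a bijection of $S^d$ onto itself, it commutes with arbitrary intersections, so $\bigcap_{P\in W}\bigl(-H(P)\bigr)=-\bigcap_{P\in W}H(P)$. Substituting into the display above gives
\[
(-W)^\circ=\bigcap_{P\in W}H(-P)=\bigcap_{P\in W}\bigl(-H(P)\bigr)=-\bigcap_{P\in W}H(P)=-W^\circ,
\]
which is exactly the desired equality.

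I expect no genuine obstacle here: the only point requiring (minimal) care is the identity $H(-P)=-H(P)$, after which the conclusion is immediate from unwinding definitions. One could alternatively route through Lemma \ref{lemmasconv}, observing that the spherical convex hull of $-W$ equals $-\,$(spherical convex hull of $W$), but this does not shorten the argument, so I would keep the direct computation above.
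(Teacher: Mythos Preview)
Your proof is correct and is essentially the same as the paper's: both unwind the definition of the polar set and use the identity $(-P)\cdot Q=P\cdot(-Q)$, with the paper phrasing this as a pointwise chain of equivalences $P\in(-W)^\circ\Leftrightarrow -P\in W^\circ$ and you packaging the same computation via $H(-P)=-H(P)$ and commuting the antipodal map through the intersection. There is no substantive difference in method.
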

\begin{proof}
Let $P$ be a point of $(-W)^{\circ}$. 
Then the following holds.
  \begin{eqnarray*}
P\in  (-W)^{\circ}& \Longleftrightarrow &
   P\cdot Q\geq 0\ for\ any\ Q\ \in (-W)\\
           & \Longleftrightarrow & P\cdot (-Q)\geq 0\ for\ any\ Q\in W\\
           & \Longleftrightarrow & -P\cdot Q\geq 0\ for\ any\ Q\in W\\
           & \Longleftrightarrow & -P\in W^\circ\\
           & \Longleftrightarrow & P\in -W^\circ.
\end{eqnarray*}
\end{proof}
\par
\noindent
By the proof of Lemma 4.1 of \cite{han1}, we have the following: 
\begin{lemma}\label{lemma1}
Let $W_1$ and $W_2$ be nonempty subsets of $S^d$. Then 
\[
W_1^{\circ}\cap W_2^{\circ}=\left(W_1\cup W_2\right)^{\circ}.
\]
\end{lemma}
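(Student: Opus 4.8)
The plan is to argue directly from the definition of the spherical polar set as an intersection of hemispheres $H(P)$, reducing the claimed identity to the elementary fact that an intersection indexed by a union of index sets splits as the intersection of the two sub-intersections. Concretely, I would first recall that by definition $W^{\circ}=\bigcap_{P\in W}H(P)$, so that
\[
(W_1\cup W_2)^{\circ}=\bigcap_{P\in W_1\cup W_2}H(P).
\]
Then I would observe that a point $Q\in S^d$ lies in the right-hand side precisely when $P\cdot Q\geq 0$ for every $P\in W_1\cup W_2$, which is equivalent to the conjunction of ``$P\cdot Q\geq 0$ for every $P\in W_1$'' and ``$P\cdot Q\geq 0$ for every $P\in W_2$'', i.e.\ to $Q\in W_1^{\circ}$ and $Q\in W_2^{\circ}$ simultaneously. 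This yields
\[
(W_1\cup W_2)^{\circ}=\Big(\bigcap_{P\in W_1}H(P)\Big)\cap\Big(\bigcap_{P\in W_2}H(P)\Big)=W_1^{\circ}\cap W_2^{\circ},
\]
which is the assertion. Following the style of the proof of Lemma \ref{lemma2.1}, the whole argument can be presented compactly as a short chain of equivalences for an arbitrary $Q\in S^d$.

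Since the nonemptiness hypothesis on $W_1$ and $W_2$ plays no role beyond ensuring the polar sets are the objects under discussion, and since the reasoning is purely set-theoretic, there is essentially no analytic obstacle here. The only point to phrase with a little care is the equivalence ``$P\cdot Q\geq 0$ for all $P\in W_1\cup W_2$'' $\Longleftrightarrow$ ``($\forall P\in W_1$) and ($\forall P\in W_2$)'' in the possible absence of any hemisphericity assumption on $W_1$ or $W_2$; but this causes no difficulty, because the identity $\bigcap_{i\in A\cup B}X_i=\big(\bigcap_{i\in A}X_i\big)\cap\big(\bigcap_{i\in B}X_i\big)$ is valid for an arbitrary indexed family of sets. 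Hence the lemma follows at once, as the text indicates, being already contained in the proof of Lemma 4.1 of \cite{han1}.
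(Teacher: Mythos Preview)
Your argument is correct and is essentially identical to the paper's own proof, which also unfolds the definition of the polar set and records the chain of equivalences $P\in W_1^{\circ}\cap W_2^{\circ}\Longleftrightarrow P\cdot Q\ge 0$ for all $Q\in W_1$ and $P\cdot R\ge 0$ for all $R\in W_2\Longleftrightarrow P\cdot Q\ge 0$ for all $Q\in W_1\cup W_2\Longleftrightarrow P\in (W_1\cup W_2)^{\circ}$. Nothing further is needed.
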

In fact Lemma \ref{lemma1} follows from the following:
\begin{eqnarray*}
  P\in  W_1^{\circ}\cap W_2^{\circ}& \Longleftrightarrow &
   P\cdot Q\geq 0\ for\ any\ Q\ \in W_1\ and\ P\cdot R\geq 0\ for\ any\ R \in W_2\\
           & \Longleftrightarrow & P\cdot Q\geq 0\ for\ any\ Q\in \left(W_1\cup W_2\right)\\
           & \Longleftrightarrow & P\in \left(W_1\cup W_2\right)^{\circ}.
\end{eqnarray*}
\begin{lemma}[\cite{nishimurasakemi2}]\label{lemmainclusion}
Let $W_{1}, W_{2}$ be nonempty subsets of $S^{d}$.    
Suppose that the inclusion ${W_{1}\subset W_{2}}$ holds.    
Then, the inclusion $W_{2}^{\circ}\subset W_{1}^{\circ}$ holds.
\end{lemma}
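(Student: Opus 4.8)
The plan is to argue directly from the definition of the spherical polar set. Recall that $W^{\circ}=\bigcap_{P\in W}H(P)$, and since $H(P)=\{Q\in S^d\mid P\cdot Q\geq 0\}$, this unfolds to the characterization
\[
W^{\circ}=\{Q\in S^d\mid P\cdot Q\geq 0\ \mbox{for\ any}\ P\in W\}.
\]
First I would fix an arbitrary point $Q\in W_2^{\circ}$ and record what its membership means: the inequality $P\cdot Q\geq 0$ holds for every $P\in W_2$.

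Next I would invoke the hypothesis $W_1\subset W_2$. Since every point of $W_1$ is in particular a point of $W_2$, the inequality $P\cdot Q\geq 0$ continues to hold for every $P\in W_1$; restricting the range of $P$ to the smaller set $W_1$ imposes no new constraint. By the characterization above, this is precisely the statement that $Q\in W_1^{\circ}$. As $Q\in W_2^{\circ}$ was arbitrary, this yields $W_2^{\circ}\subset W_1^{\circ}$, the desired inclusion.

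I expect no real obstacle here: the statement is simply the order-reversing (anti-monotone) behaviour of polarity, reflecting the fact that intersecting over a larger index family can only shrink the result. An alternative one-line route uses Lemma \ref{lemma1}: from $W_1\subset W_2$ we get $W_1\cup W_2=W_2$, whence $W_2^{\circ}=(W_1\cup W_2)^{\circ}=W_1^{\circ}\cap W_2^{\circ}\subset W_1^{\circ}$. I would present the direct element-chase as the main argument for transparency, and could remark on the Lemma \ref{lemma1} shortcut if brevity is preferred.
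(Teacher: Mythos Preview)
Your argument is correct; the element-chase from the definition $W^{\circ}=\bigcap_{P\in W}H(P)$ is exactly the right (and standard) way to see the anti-monotonicity of spherical polarity, and your alternative via Lemma~\ref{lemma1} is also valid. Note that the paper does not actually supply its own proof of this lemma---it is quoted from \cite{nishimurasakemi2}---so there is nothing in the present text to compare against; your write-up would serve perfectly well as an in-text proof.
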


\begin{lemma}[\cite{nishimurasakemi2}]\label{lemmas-conv}
For any nonempty closed hemispherical subset $W \subset S^{d}$, 
the equality $\mbox{ \rm s-conv}(W)= \left(
\mbox{ \rm s-conv}\left( W \right) \right)^{\circ\circ }$ holds.
\end{lemma}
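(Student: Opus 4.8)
The plan is to reduce the spherical bipolar identity to the classical duality theorem for closed convex cones in $\mathbb{R}^{d+1}$. Write $K=\mbox{\rm s-conv}(W)$, and to any nonempty $A\subset S^{d}$ associate its conical hull $\mathrm{cone}(A)=\{\lambda Q\mid \lambda\geq 0,\ Q\in A\}$, and to any $C\subset\mathbb{R}^{d+1}$ its dual cone $C^{*}=\{R\mid R\cdot x\geq 0\ \text{for all}\ x\in C\}$. The proof will rest on three facts: (i) $\mathrm{cone}(K)$ is a closed convex cone; (ii) for every nonempty $A\subset S^{d}$ with $A^{\circ}\neq\emptyset$ one has $\mathrm{cone}(A^{\circ})=\mathrm{cone}(A)^{*}$ and $A=\mathrm{cone}(A)\cap S^{d}$; and (iii) the Euclidean duality theorem $C^{**}=C$ for any closed convex cone $C$. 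Fact (ii) is purely formal: for $R\in S^{d}$ the condition $R\in A^{\circ}$ reads $R\cdot Q\geq 0$ for all $Q\in A$, which, since $\mathrm{cone}(A)$ is generated by $A$, is exactly $R\in\mathrm{cone}(A)^{*}$; homogenizing gives $\mathrm{cone}(A^{\circ})=\mathrm{cone}(A)^{*}$, both cones containing $0$, and $A=\mathrm{cone}(A)\cap S^{d}$ is immediate from $|Q|=1$.

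The geometric content is fact (i). Since $W$ is closed in the compact sphere it is compact, and since it is hemispherical there exist $M\in S^{d}$ and $\delta>0$ with $M\cdot Q\geq\delta$ for all $Q\in W$; by Lemma \ref{lemmasconv} (or directly from the formula for $\mbox{\rm s-conv}$) the same bound holds on $K$, so $K$ is compact and $0\notin\mathrm{conv}(W)$. Spherical convexity of $K$ yields convexity of $\mathrm{cone}(K)$: for $P,Q\in K$ and $\lambda_{1},\lambda_{2}\geq 0$ not both zero, $\lambda_{1}P+\lambda_{2}Q=(\lambda_{1}+\lambda_{2})\,\| tP+(1-t)Q\|\cdot\tfrac{tP+(1-t)Q}{\| tP+(1-t)Q\|}$ with $t=\lambda_{1}/(\lambda_{1}+\lambda_{2})$, and the normalized geodesic combination lies in $K$. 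For closedness, if $\lambda_{n}Q_{n}\to y\neq 0$ with $Q_{n}\in K$, then $\delta\lambda_{n}\leq M\cdot(\lambda_{n}Q_{n})\to M\cdot y$ keeps $\lambda_{n}$ bounded; passing to a subsequence $\lambda_{n}\to\lambda$ and $Q_{n}\to Q\in K$ gives $y=\lambda Q\in\mathrm{cone}(K)$.

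Now I would run the chain. Applying (ii) with $A=K$ gives $\mathrm{cone}(K^{\circ})=\mathrm{cone}(K)^{*}$; here $K^{\circ}=W^{\circ}$ is nonempty. Applying (ii) once more with $A=K^{\circ}$,
\[
\mathrm{cone}\bigl((K^{\circ})^{\circ}\bigr)=\mathrm{cone}(K^{\circ})^{*}=\bigl(\mathrm{cone}(K)^{*}\bigr)^{*}=\mathrm{cone}(K)^{**}.
\]
By fact (i) the cone $\mathrm{cone}(K)$ is closed and convex, so fact (iii) gives $\mathrm{cone}(K)^{**}=\mathrm{cone}(K)$. Intersecting with $S^{d}$ and using $A=\mathrm{cone}(A)\cap S^{d}$ twice yields $K^{\circ\circ}=\mathrm{cone}(K)\cap S^{d}=K$, which is precisely the asserted equality $\mbox{\rm s-conv}(W)=\bigl(\mbox{\rm s-conv}(W)\bigr)^{\circ\circ}$.

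I expect the only genuine obstacle to be fact (i): the identity $C^{**}=C$ is false for non-closed cones, so the crux is verifying that $\mathrm{cone}(\mbox{\rm s-conv}(W))$ is really closed, and this is exactly where the hypotheses that $W$ is closed and hemispherical are used, through the compactness of $K$ together with the uniform bound $M\cdot Q\geq\delta$ that keeps the generating rays away from the origin. Everything else is either the formal polar-to-dual-cone dictionary or the standard Euclidean cone duality theorem, the latter packaging the separating-hyperplane argument that one would otherwise have to carry out directly on $S^{d}$.
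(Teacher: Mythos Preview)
The paper does not prove this lemma at all; it is quoted from \cite{nishimurasakemi2} and used as a black box throughout Sections~3--5. So there is no ``paper's own proof'' to compare against.

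Your argument is correct and is the standard route: pass to the conical hull, identify the spherical polar with the Euclidean dual cone, and invoke the bipolar theorem $C^{**}=C$ for closed convex cones. The only place the hypotheses on $W$ enter is in showing $\mathrm{cone}(K)$ is closed, and you handle this with the uniform lower bound $M\cdot Q\geq\delta$ coming from compactness of $W$ and strict hemisphericity. Two small expository points: (a) the sentence ``so $K$ is compact'' is not an immediate consequence of the bound $M\cdot Q\geq\delta$ on $K$; what you really want is that $\mathrm{conv}(W)$ is compact (Carath\'eodory) and avoids the origin, whence $K$ is its image under the continuous radial projection and hence compact---your later sequential argument for closedness of $\mathrm{cone}(K)$ then goes through verbatim; (b) the appeal to Lemma~\ref{lemmasconv} for the $\delta$-bound on $K$ is not quite right (that lemma only yields $M\in K^{\circ}$, i.e.\ $M\cdot Q\geq 0$ on $K$); the parenthetical ``directly from the formula for $\mbox{\rm s-conv}$'' is the correct justification, since for a normalized convex combination one has $M\cdot\frac{\sum\lambda_iP_i}{\|\sum\lambda_iP_i\|}\geq\frac{\delta}{\|\sum\lambda_iP_i\|}\geq\delta$. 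Neither point is a genuine gap.
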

It is known that the spherical dual transform is an isometry:
\begin{theorem}[\cite{hannishimura}] \label{isometry}
Let $\mathcal{H}(S^d)$ be the set of non-empty closed subsets of $S^d$ 
and let
\begin{align*}
\mathcal{H}(S^d, P) = \{
W\in \mathcal{H}(S^d) \mid 
 W\ \mbox{is a sphrical}\ &\mbox{convex body and}\\ 
& W \cap H(-P)=\emptyset,
 P\in int(W)
\}.
\end{align*}
Then the mapping 
\[
\bigcirc_P:\overline{\mathcal{H}(S^d, P)}\to \overline{\mathcal{H}(S^d,P)}
\]
is an isometry with respect to Pompeiu-Hausdorff metric, where
and 
$\overline{\mathcal{H}(S^d, P)}$ is the closure of 
$\mathcal{H}(S^d, P)$ in $\mathcal{H}(S^d)$.
\end{theorem}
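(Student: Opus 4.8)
Throughout this sketch I read $\bigcirc_P$ as the spherical polar (``dual'') transform $\bigcirc_P(W)=W^{\circ}$, as the phrase ``the spherical dual transform is an isometry'' and the preceding list of lemmas indicate; $d_H$ denotes the Pompeiu--Hausdorff distance on $S^d$ and $B(A,\varepsilon)$ the closed $\varepsilon$-neighbourhood of $A$.

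\emph{Reduction to a local $1$-Lipschitz bound.} First I would check that $\bigcirc_P$ really preserves $\mathcal H(S^d,P)$: if $\overline{B(P,\delta)}\subseteq W$ then $W^{\circ}\subseteq \overline{B(P,\delta)}^{\circ}=\overline{B(P,\tfrac\pi2-\delta)}$, which misses $H(-P)$; and since $W$ is compact with $W\cap H(-P)=\emptyset$ we have $W\subseteq \overline{B(P,\tfrac\pi2-\delta')}$ for some $\delta'>0$, so $W^{\circ}\supseteq\overline{B(P,\delta')}$ and $P\in\mathrm{int}(W^{\circ})$. By Lemma \ref{lemmas-conv}, $W^{\circ\circ}=W$, so $\bigcirc_P$ is an involution, and by the continuity asserted in the statement it extends as an involution to the closure. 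The key formal point is that a distance-non-increasing involution is automatically an isometry: applying $d_H(W_1^{\circ},W_2^{\circ})\le d_H(W_1,W_2)$ to the pair $(W_1^{\circ},W_2^{\circ})$ gives $d_H(W_1,W_2)\le d_H(W_1^{\circ},W_2^{\circ})$. So it suffices to prove that $\bigcirc_P$ is $1$-Lipschitz; since $d_H$ and $\bigcirc_P$ are continuous, a standard path argument reduces this to a local statement, and by density of the $C^{2}$ strictly convex bodies it is enough to fix such a body $W_0\in\mathcal H(S^d,P)$ and prove a $1$-Lipschitz estimate for all $W_1,W_2$ in a neighbourhood of $W_0$ that are $\varepsilon$-close with $\varepsilon$ small.

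\emph{The polar / $\varepsilon$-neighbourhood dictionary.} The mechanism I would use is the identity
\[
\big(B(W,\varepsilon)\big)^{\circ}=\bigcap_{Q\in W}\big(B(Q,\varepsilon)\big)^{\circ}=\bigcap_{Q\in W}\overline{B\!\left(Q,\tfrac\pi2-\varepsilon\right)}=\{X\in S^d:\ B(X,\varepsilon)\subseteq W^{\circ}\}=:W^{\circ}\ominus\varepsilon ,
\]
the $\varepsilon$-erosion of $W^{\circ}$: the first equality is ``polar of a union is the intersection of polars'' (the computation displayed after Lemma \ref{lemma1}, with $B(W,\varepsilon)=\bigcup_{Q\in W}B(Q,\varepsilon)$, or the dual form of Lemma \ref{lemmaobvi}), the middle equality is the elementary evaluation of the polar of a metric ball, and the last one rewrites $X\cdot Q\ge0$ as $d(X,Q)\le\tfrac\pi2$. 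Now if $d_H(W_1,W_2)\le\varepsilon<\tfrac\pi2$ then $W_2\subseteq B(W_1,\varepsilon)$ and $W_1\subseteq B(W_2,\varepsilon)$; applying the inclusion-reversing Lemma \ref{lemmainclusion} yields $W_1^{\circ}\ominus\varepsilon\subseteq W_2^{\circ}$ and $W_2^{\circ}\ominus\varepsilon\subseteq W_1^{\circ}$, hence $B\big(W_1^{\circ}\ominus\varepsilon,\varepsilon\big)\subseteq B(W_2^{\circ},\varepsilon)$ and $B\big(W_2^{\circ}\ominus\varepsilon,\varepsilon\big)\subseteq B(W_1^{\circ},\varepsilon)$. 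Thus $d_H(W_1^{\circ},W_2^{\circ})\le\varepsilon$ will follow once we know that each $W_i^{\circ}$ coincides with its own $\varepsilon$-opening, $W_i^{\circ}=B\big(W_i^{\circ}\ominus\varepsilon,\varepsilon\big)$.

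\emph{The geometric core (the main obstacle).} The opening of a convex body can be strictly smaller than the body --- thin bodies, which appear as polars precisely when $W$ fails to be strictly convex, are the obstruction --- so this is exactly the step where strict convexity and the fixed interior point $P$ must be fed back in. The plan is: (i) show that the polar of a $C^{2}$ strictly convex spherical body in $\mathcal H(S^d,P)$ is again $C^{2}$ strictly convex and admits an inscribed ``rolling'' geodesic ball of some radius $\rho(W)>0$, with $\rho$ bounded away from $0$ as $W$ ranges over a fixed neighbourhood of $W_0$; (ii) use the rolling-ball property to get $W^{\circ}=B(W^{\circ}\ominus\varepsilon,\varepsilon)$ for all $\varepsilon\le\rho(W)$: given $y\in W^{\circ}$ at distance $s<\varepsilon$ from $\partial W^{\circ}$, the inscribed ball of radius $\rho(W)$ touching the nearest boundary point contains the ball of radius $\varepsilon$ centred at the point $x$ lying a distance $\varepsilon$ inside along the inner normal, so $B(x,\varepsilon)\subseteq W^{\circ}$ while $d(x,y)=\varepsilon-s<\varepsilon$. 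Combining (i)--(ii) with the previous paragraph gives the local $1$-Lipschitz bound, and the Reduction step then finishes the proof. The delicate parts are the curvature lower bound for the polar in (i), checking that the rolling-ball estimate in (ii) goes through in the spherical metric (which is fine because everything happens inside the geodesically convex ball $B(P,\tfrac\pi2)$), and the density/localisation bookkeeping; the remainder is formal.
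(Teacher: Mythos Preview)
The paper does not prove this theorem; it is quoted from \cite{hannishimura} and used as a black box. Your reduction to a $1$-Lipschitz bound via the involution $W^{\circ\circ}=W$ is the right opening move, and your erosion identity $(B(W,\varepsilon))^\circ=W^\circ\ominus\varepsilon$ is correct. But by dualising in this direction you back yourself into the opening problem $W^\circ\subseteq B(W^\circ\ominus\varepsilon,\varepsilon)$, which is genuinely false for spherical polytopes (dilating the eroded body rounds off the corners), and your proposed repair through $C^{2}$ density and rolling balls is not complete as written: you invoke continuity of $\bigcirc_P$ in the reduction, which has not yet been established; and even granting it, your rolling-ball estimate only covers pairs of smooth bodies that are already $\varepsilon$-close with $\varepsilon$ below a rolling radius, so passing to arbitrary pairs still requires a geodesic path through the smooth class of length exactly $d_H(W_1,W_2)$, which you have not supplied.

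The detour is unnecessary. Rather than computing the polar of a dilation, bound $d(Q,W_1^{\circ})$ directly for $Q\in W_2^{\circ}$: if $Q\notin W_1^{\circ}$, let $Q'$ be its nearest point in $W_1^{\circ}$; the great sphere through $Q'$ perpendicular to the geodesic $QQ'$ is $\partial H(P_0)$ for some $P_0$, and the usual first-variation argument shows that $H(P_0)$ supports $W_1^{\circ}$ at $Q'$, so $P_0\in (W_1^{\circ})^{\circ}=W_1$ and $d(Q,W_1^{\circ})=d(Q,H(P_0))=d(Q,P_0)-\tfrac{\pi}{2}$. Now pick $P_0'\in W_2$ with $d(P_0,P_0')\le\varepsilon$ (from $W_1\subseteq\overline{B(W_2,\varepsilon)}$); since $Q\in W_2^{\circ}$ gives $d(Q,P_0')\le\tfrac{\pi}{2}$, the triangle inequality yields $d(Q,W_1^{\circ})\le\varepsilon$. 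This is essentially the content of Lemma~\ref{inteq} (with $W$ convex), which the paper also imports from \cite{hannishimura}, and it delivers the global $1$-Lipschitz bound in one stroke, with no smoothness, no approximation, and no opening.
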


\section{Separation center sets}
In this section, the properties of the two cases of the separation center sets are studied. 
The first one is the case $m=2$ and $W_1, W_2$ are spherical closed convex sets. 
This is an essential situation for separability of spherical convex sets. 
The another one is the case that spherical finite sets, $W_i=\{P_i\}  (i\in \{1, \dots, m\}$. 
The motivation for considering this case is that it is naturally related to the face number of spherical  polytopes. 
Moreover, 
if $W_i=\{P_i\}  (i\in \{1, \dots, m\}$ is a hemispherical finite set, 
then combine with central projection, it is naturally related to the vertices of Euclidean polytopes,
which is an important branch of Euclidean convex geometry.
\subsection{Case 1: $m=2$ and $W_1, W_2$ are spherical convex sets}
In \cite{hannishimura2020, zalinescu}, the separability of two 
nonempty closed (open)  and spherical convex subsets are studied. 
\begin{proposition}[\cite{hannishimura2020, zalinescu}]\label{sstheorem}
Let $W_1, W_2\subset S^d$ be 
two non-empty   
closed (resp., open)  and spherical convex subsets.    
Then, the following (1) and (2) are equivalent.   
\begin{enumerate}
\item[(1)] 
$W_1\cap W_2=\emptyset$.   
\item[(2)] The subset consisting of points 
$P\in S^d$ such that $P\cdot Q>0$ for any $Q\in W_1$ and $P\cdot R<0$ for any 
$R\in W_2$ is non-empty open (resp., closed) and spherical convex. 
\end{enumerate}   
\end{proposition}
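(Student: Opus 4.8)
The plan is to derive the substantive implication $(1)\Rightarrow(2)$ from the classical Euclidean separation theorem, after recasting the disjointness hypothesis as a statement about Euclidean convex hulls in $\mathbb{R}^{d+1}$, and then to check directly that the separating set so produced is open and spherically convex. The implication $(2)\Rightarrow(1)$ requires nothing: if $P$ satisfies $P\cdot Q>0$ for every $Q\in W_1$ and $P\cdot R<0$ for every $R\in W_2$, then a point lying in both $W_1$ and $W_2$ would have inner product with $P$ simultaneously positive and negative, which is absurd.

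For $(1)\Rightarrow(2)$ in the closed case, I would first record that each $W_i$, being spherical convex, is hemispherical, and, being closed in the compact space $S^d$, is a compact subset of an open hemisphere; hence $W_i$ lies in a halfspace $\{x\in\mathbb{R}^{d+1}\mid N_i\cdot x\ge c_i\}$ for some $c_i>0$, so $0\notin\mathrm{conv}(W_1)$ and $0\notin\mathrm{conv}(W_2)$. The key claim is then that $W_1\cap W_2=\emptyset$ forces $0\notin\mathrm{conv}\!\left(W_1\cup(-W_2)\right)$. To prove it, suppose $0=\sum_i\lambda_iP_i-\sum_j\mu_jR_j$ with $P_i\in W_1$, $R_j\in W_2$, all coefficients nonnegative and $\sum_i\lambda_i+\sum_j\mu_j=1$. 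Neither all $\lambda_i$ nor all $\mu_j$ can vanish, since that would put $0$ in $\mathrm{conv}(W_2)$, respectively $\mathrm{conv}(W_1)$. Hence $a:=\sum_i\lambda_i\in(0,1)$, and $p:=\sum_i\lambda_iP_i=\sum_j\mu_jR_j$ is nonzero (else $0$ would again be a convex combination of points of $W_1$). Dividing the two equal representations of $p$ by $\|p\|$ exhibits the single unit vector $p/\|p\|$ both as an element of $\mbox{\rm s-conv}(W_1)$ and as an element of $\mbox{\rm s-conv}(W_2)$; since a closed spherical convex set equals its own spherical convex hull, this contradicts $W_1\cap W_2=\emptyset$, proving the claim.

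With the claim established, $\mathrm{conv}\!\left(W_1\cup(-W_2)\right)$ is a compact convex set not containing $0$, so the Euclidean separation theorem produces $v\in\mathbb{R}^{d+1}\setminus\{0\}$ with $v\cdot z>0$ for every $z$ in it; in particular $v\cdot Q>0$ for $Q\in W_1$ and $v\cdot R<0$ for $R\in W_2$, so $P:=v/\|v\|$ lies in the set described in $(2)$, which is therefore nonempty. This set is open because $W_1$ and $W_2$ are compact, which makes $P\mapsto\min_{Q\in W_1}P\cdot Q$ and $P\mapsto\max_{R\in W_2}P\cdot R$ continuous. It is spherically convex because, for any fixed $Q_0\in W_1$, it sits inside the open hemisphere $\{P\mid P\cdot Q_0>0\}$ (so it is hemispherical, and along any geodesic between two of its points the normalizing denominator stays positive), while a convex combination of unit vectors whose inner products with a given point all share one sign retains that sign, even after division by the positive norm.

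The version with $W_1,W_2$ open is handled along the same lines: disjoint open spherical convex sets can still be separated, though possibly only in the non-strict sense, and for a full-dimensional open spherical convex set the requirement $P\cdot Q>0$ for all $Q\in W_1$ is equivalent to $P\cdot Q\ge 0$ for all $Q\in\overline{W_1}$, which displays the separating set as an intersection of closed conditions and hence as a closed set, spherically convex by the computation just used. The step I expect to be the main obstacle is the key claim of the second paragraph --- converting the purely set-theoretic $W_1\cap W_2=\emptyset$ into the Euclidean statement $0\notin\mathrm{conv}(W_1\cup(-W_2))$; once that is in hand, what remains is the classical separation theorem together with routine continuity and convexity checks.
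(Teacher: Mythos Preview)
The paper does not contain its own proof of this proposition: it is quoted from \cite{hannishimura2020, zalinescu} and simply used as a known result. So there is nothing in the paper to compare your argument against.

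Judged on its own, your argument for the closed case is correct and cleanly organised. The reduction to the Euclidean statement $0\notin\mathrm{conv}(W_1\cup(-W_2))$ is exactly the right move; your proof of that claim (using that $p/\|p\|$ would land in $\mbox{\rm s-conv}(W_1)\cap\mbox{\rm s-conv}(W_2)=W_1\cap W_2$) is valid, and the openness and spherical convexity checks are fine. This is essentially the approach one finds in the cited sources.

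The open case is less complete. Your equivalence ``$P\cdot Q>0$ for all $Q\in W_1$ $\Longleftrightarrow$ $P\cdot Q\ge 0$ for all $Q\in\overline{W_1}$'' is correct (for $W_1$ open in $S^d$) and does give closedness once nonemptiness is known. But for nonemptiness you only say that ``disjoint open spherical convex sets can still be separated, though possibly only in the non-strict sense''; this is the whole content of the implication and needs an actual argument. One clean way: with $C_i=\operatorname{cone}(\overline{W_i})\subset\mathbb{R}^{d+1}$, the open sets $W_1,W_2$ being disjoint means $\operatorname{int}C_1\cap\operatorname{int}C_2=\{0\}$'s complement is avoided in the right sense, so $0$ is not an interior point of $\mathrm{conv}(\overline{W_1}\cup(-\overline{W_2}))$; a supporting hyperplane at $0$ then yields $P$ with $P\cdot Q\ge 0$ on $\overline{W_1}$ and $P\cdot R\le 0$ on $\overline{W_2}$, and your equivalence upgrades this to strict inequalities on $W_1,W_2$. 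You should spell this step out rather than assert it.
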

As a corollary, the following holds: 
\begin{lemma}[\cite{lassak2015}]\label{lassaklemma}
Every two convex bodies on the sphere $S^d$ with empty intersection of their interiors are subsets of some two opposite hemispheres.
\end{lemma}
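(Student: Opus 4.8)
The plan is to deduce this from the open-set case of Proposition~\ref{sstheorem} by passing to interiors. Let $W_1, W_2\subset S^d$ be convex bodies with $\mathrm{int}(W_1)\cap\mathrm{int}(W_2)=\emptyset$. First I would record the elementary facts that $\mathrm{int}(W_1)$ and $\mathrm{int}(W_2)$ are non-empty (this is what it means for $W_i$ to be a body), hemispherical (they are contained in the hemispherical sets $W_i$), open, and spherical convex; the last point follows since central projection from an interior point carries $W_i$ to a Euclidean convex body, is a homeomorphism on the relevant hemisphere, and carries interior to interior, so spherical convexity of $\mathrm{int}(W_i)$ reduces to the classical fact that the interior of a convex body in $\mathbb{R}^d$ is convex. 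Hence $\mathrm{int}(W_1)$ and $\mathrm{int}(W_2)$ form a pair of disjoint non-empty open spherical convex subsets of $S^d$.

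Applying the implication $(1)\Rightarrow(2)$ of Proposition~\ref{sstheorem}, with the convex sets taken open, to $\mathrm{int}(W_1)$ and $\mathrm{int}(W_2)$, I obtain a non-empty set of points $P\in S^d$ with $P\cdot Q>0$ for every $Q\in\mathrm{int}(W_1)$ and $P\cdot R<0$ for every $R\in\mathrm{int}(W_2)$; fix such a $P$. It remains to promote these strict inequalities on the interiors to the non-strict inequalities defining $H(P)$ and $H(-P)$ on the full bodies. Since $W_1$ is a convex body, $W_1=\overline{\mathrm{int}(W_1)}$, so any $Q\in W_1$ is a limit of points $Q_n\in\mathrm{int}(W_1)$; from $P\cdot Q_n>0$ and continuity of the scalar product we get $P\cdot Q\ge 0$, that is, $Q\in H(P)$. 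Thus $W_1\subset H(P)$, and in the same way $W_2\subset H(-P)$, since $P\cdot R\le 0$ means $(-P)\cdot R\ge 0$. As $H(P)$ and $H(-P)$ are opposite hemispheres, the lemma follows.

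The only points that are not entirely routine are (i) verifying that the interior of a hemispherical convex body is again hemispherical and spherical convex, so that Proposition~\ref{sstheorem} genuinely applies, and (ii) the density statement $W_i=\overline{\mathrm{int}(W_i)}$; both are standard and are most transparently checked via central projection, after which the separating point is produced directly by Proposition~\ref{sstheorem} and the conclusion is a one-line limiting argument. One could instead try to apply Proposition~\ref{sstheorem} to $W_1$ and $W_2$ themselves and handle the touching case by a perturbation, but passing to interiors avoids any case distinction and is cleaner.
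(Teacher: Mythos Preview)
Your proof is correct and matches the paper's approach: the paper does not spell out a proof but simply states the lemma ``as a corollary'' of Proposition~\ref{sstheorem}, and your argument is precisely the natural way to realize that corollary---pass to the open interiors, apply the open case of Proposition~\ref{sstheorem} to get a strict separator, then close up using $W_i=\overline{\mathrm{int}(W_i)}$.
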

\begin{proposition}\label{theorem1}
Let $W_1, W_2$ be two nonempty closed 
and spherical convex sets of $S^d$. Then the following two assertions are equivalent.
\begin{enumerate}
\item $\mathcal{C}(W_1, W_2)$ is a nonempty set.
\item The intersections $W_1\cap \mbox{int}(W_2)$ and 
$\mbox{int}(W_1)\cap W_2$ are empty set, 
where $\mbox{int}(W)$ means the interior set of $W$.
\end{enumerate}
\end{proposition}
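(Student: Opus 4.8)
The plan is to prove the two implications separately, leaning on Proposition \ref{sstheorem} for the hard direction.

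For the implication $(2)\Rightarrow(1)$, assume $W_1\cap\operatorname{int}(W_2)=\emptyset$ and $\operatorname{int}(W_1)\cap W_2=\emptyset$. First I would dispose of the degenerate cases where $W_1$ or $W_2$ has empty interior (i.e.\ is contained in a great subsphere): in that case one replaces the set by a slightly ``thinned'' full-dimensional neighbour or, more cleanly, works directly with the closed separating functional. In the main case, both $\operatorname{int}(W_1)$ and $\operatorname{int}(W_2)$ are nonempty open spherical convex sets, and the hypotheses say $\operatorname{int}(W_1)\cap\operatorname{int}(W_2)=\emptyset$ (this follows since each intersection in (2) being empty forces the interiors to be disjoint — indeed $\operatorname{int}(W_1)\cap\operatorname{int}(W_2)\subset\operatorname{int}(W_1)\cap W_2=\emptyset$). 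Apply Proposition \ref{sstheorem} (open case) to $\operatorname{int}(W_1),\operatorname{int}(W_2)$: the set of $P$ with $P\cdot Q>0$ for all $Q\in\operatorname{int}(W_1)$ and $P\cdot R<0$ for all $R\in\operatorname{int}(W_2)$ is nonempty. Taking such a $P$ and using continuity of the scalar product together with $W_i=\overline{\operatorname{int}(W_i)}$, we get $P\cdot Q\geq 0$ for all $Q\in W_1$ and $P\cdot R\leq 0$ for all $R\in W_2$, i.e.\ $P\in\mathcal{C}(W_1,W_2)$, so it is nonempty.

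For the contrapositive of $(1)\Rightarrow(2)$, suppose (say) $\operatorname{int}(W_1)\cap W_2\neq\emptyset$; I claim $\mathcal{C}(W_1,W_2)=\emptyset$. Pick $Q_0\in\operatorname{int}(W_1)\cap W_2$. If some $P\in\mathcal{C}(W_1,W_2)$ existed, then $P\cdot Q_0\leq 0$ because $Q_0\in W_2$, while $P\cdot Q\geq 0$ for all $Q\in W_1$; since $Q_0$ is an interior point of $W_1$, it lies in the relative interior of a spherical segment of $W_1$ through $Q_0$ in the direction that increases $P\cdot(\,\cdot\,)$, which would force $P\cdot Q_0>0$ unless $P\cdot Q\equiv 0$ on a neighbourhood of $Q_0$ in $W_1$ — and in the latter case $P$ annihilates the whole great subsphere spanned near $Q_0$, hence (by full-dimensionality of $\operatorname{int}(W_1)$) $P=0$, a contradiction since $P\in S^d$. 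The same argument with the roles of $W_1,W_2$ swapped handles $W_1\cap\operatorname{int}(W_2)\neq\emptyset$.

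The main obstacle is the interplay between the \emph{closed} separation used in the definition of $\mathcal{C}$ and the \emph{open/strict} separation supplied by Proposition \ref{sstheorem}, especially when one of the bodies is lower-dimensional; the passage from strict inequalities on the interiors to non-strict inequalities on the closures via continuity is routine, but the lower-dimensional bookkeeping in the $(1)\Rightarrow(2)$ direction (ruling out $P=0$) needs the observation that an interior point of a convex body cannot be ``flattened'' by a nonzero linear functional that is nonnegative on the whole body.
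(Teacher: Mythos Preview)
Your approach is essentially the paper's: for $(1)\Rightarrow(2)$ you both argue that a point of $W_1\cap W_2$ would be forced onto $\partial H(P)$, which is incompatible with it being an interior point of either set; for $(2)\Rightarrow(1)$ in the full-dimensional case you invoke Proposition~\ref{sstheorem} on the open interiors and pass to closures, whereas the paper uses the equivalent Lemma~\ref{lassaklemma} (Lassak) directly on the bodies. The one place your write-up is thinner than the paper's is the degenerate case $\operatorname{int}(W_i)=\emptyset$: you announce you would ``dispose of'' it but give no argument, while the paper at least records the claim that one can choose $P$ with $W_1\subset\partial H(P)$ and $W_2\subset H(P)$---you should supply the short justification here rather than leave it as a promissory note.
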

\begin{proof}
First, we prove that (1) implies (2). 
By the assumption, there exists a point $P\in S^d$ such that 
\[
P\cdot Q\geq 0\ \mbox{for\ any}\ Q\in W_1\ \mbox{and}\ P\cdot R\leq 0\ \mbox{for\ any}
\ R\in W_2.
\]
This implies that the intersection $W_1\cap W_2$ is a subset of the boundary of $H(P)$, 
\[
\partial H(P)=\{M\in S^d\mid M\cdot P=0\}.
\]
Therefore, the intersection $W_1\cap W_2$ does not contain interior point. 
In particular, the intersections $W_1\cap \mbox{int}(W_2)$ and 
$\mbox{int}(W_1)\cap W_2$ are empty.
\indent
\par
Next, we prove that (2) implies (1). 
\par
Case 1:  $W_1$ (or $W_2$) is a closed set in $S^d$ with empty interior. By the assumption, 
there exists a point $P\in S^d$ such that $W_1\subset \partial H(P)$ and $W_2 \subset H(P)$, 
 namely,
\[
P\cdot Q= 0\ \mbox{for\ any}\ Q\in W_1\ \mbox{and}\ P\cdot R\geq 0\ \mbox{for\ any}
\ R\in W_2,
\]
where $\partial H(P)$ means the boundary of $H(P)$.
Therefore, it follows that $P\in \mathcal{C}(W_1, W_2)$, 
the semi-separation center set of $W_1, W_2$ is nonempty.
\par
Case 2: $W_1, W_2$ include interior points.
Since $W_1\cap W_2$ has no interior points,
by Lemma \ref{lassaklemma}, 
there exists a point
$P\in S^d$ such that 
$W_1 \subset H(P)$ and $ W_2\subset H(-P)$, 
that is,
\[
P\cdot Q\geq 0\ \mbox{for\ any}\ Q\in W_1\ \mbox{and}\ P\cdot R\leq 0\ \mbox{for\ any}
\ R\in W_2.
\]
Therefore, separation center set $\mathcal{C}(W_1, W_2)$ is nonempty.
\end{proof}
The following result implies that for any nonempty spherical closed convex set $\mathcal{X}$, 
 the union $\widehat{\mathcal{X}}=\mathcal{X}\cup -\mathcal{X}$ can be regarded as
the separation center set of suitable two nonempty spherical convex sets.
\begin{proposition}\label{recog}
Let $\mathcal{X}=W_1\cap W_2$ be a nonempty set, 
where $W_1, W_2$ are nonempty closed  
and spherical convex subsets of $S^d$. 
Then $\widehat{\mathcal{X}}$ is the separation center set of $W_1^\circ$
and $-W_2^\circ$.
\end{proposition}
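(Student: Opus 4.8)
The plan is to show the two set-theoretic inclusions between $\widehat{\mathcal{X}} = \mathcal{X}\cup-\mathcal{X}$ and the separation center set $\mathcal{C}(W_1^\circ,-W_2^\circ)\cup\mathcal{C}(-W_2^\circ,W_1^\circ)$ of the pair $(W_1^\circ,-W_2^\circ)$, by unwinding definitions and invoking the duality lemmas from Section 2. First I would fix a point $P\in\mathcal{X}=W_1\cap W_2$ and check directly that $P$ lies in the separation center set of $W_1^\circ$ and $-W_2^\circ$: since $P\in W_1$, the definition of the polar set gives $P\cdot Q\geq 0$ for every $Q\in W_1^\circ$; and since $P\in W_2$, for every $R\in -W_2^\circ$ we have $-R\in W_2^\circ$, hence $P\cdot(-R)\geq 0$, i.e. $P\cdot R\leq 0$. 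Thus $P\in\mathcal{C}(W_1^\circ,-W_2^\circ)$. Replacing $P$ by $-P$ handles the other half of $\widehat{\mathcal{X}}$ via the symmetry of the separation center set, so $\widehat{\mathcal{X}}\subset\mathcal{C}(W_1^\circ,-W_2^\circ)\cup\mathcal{C}(-W_2^\circ,W_1^\circ)$.

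For the reverse inclusion, I would take $P\in\mathcal{C}(W_1^\circ,-W_2^\circ)$, so $P\cdot Q\geq 0$ for all $Q\in W_1^\circ$ and $P\cdot R\leq 0$ for all $R\in -W_2^\circ$. The first condition says $P\in(W_1^\circ)^\circ$; by Lemma \ref{lemmas-conv} (applied to $W_1$, which is closed, hemispherical and convex, so $W_1 = \mbox{s-conv}(W_1) = (W_1^\circ)^{\circ}$ as $W_1$ is already spherically convex and closed) this yields $P\in W_1$. The second condition says $-P\cdot R\geq 0$ for all $R\in -W_2^\circ$, i.e. $-P\in(-W_2^\circ)^\circ = -(W_2^\circ)^\circ$ by Lemma \ref{lemma2.1}, so $P\in(W_2^\circ)^\circ = W_2$ again by Lemma \ref{lemmas-conv}. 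Hence $P\in W_1\cap W_2=\mathcal{X}\subset\widehat{\mathcal{X}}$. By central symmetry of the separation center set, the analogous argument for $P\in\mathcal{C}(-W_2^\circ,W_1^\circ)$ puts $P\in-\mathcal{X}\subset\widehat{\mathcal{X}}$, completing the equality.

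The one point that needs care — the main (minor) obstacle — is the precise form of the double-polar identity I invoke. Lemma \ref{lemmas-conv} is stated as $\mbox{s-conv}(W)=(\mbox{s-conv}(W))^{\circ\circ}$ for closed hemispherical $W$; I must make sure that when $W$ is already closed and spherically convex we genuinely get $W=W^{\circ\circ}$, and that $W_1^\circ$, $W_2^\circ$ are hemispherical (so that taking their polars is legitimate). The hemisphericity of $W_i^\circ$ follows because $W_i$ is nonempty and hemispherical: any point of $W_i$ witnesses that $W_i^\circ$ misses an open hemisphere; one should spell this out or cite the relevant fact from Section 2. Once these bookkeeping items are in place, the proof is a short chain of equivalences exactly parallel to the displayed computations after Lemma \ref{lemma1} and in the proof of Lemma \ref{lemma2.1}.
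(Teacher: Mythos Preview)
Your proposal is correct and follows essentially the same route as the paper: both arguments establish the identity $\mathcal{X}=\mathcal{C}(W_1^\circ,-W_2^\circ)$ by unwinding the definition of $\mathcal{C}$ and invoking Lemma~\ref{lemma2.1} together with the double-polar identity $W=W^{\circ\circ}$ from Lemma~\ref{lemmas-conv}. The paper packages the two inclusions into a single chain of equivalences (rewriting $W_i=(-(-W_i^\circ))^\circ$ first), whereas you verify the inclusions separately, but the logical content is the same.
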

\begin{proof}
Notice that $W=W^{\circ\circ}$, 
for any nonempty closed and spherical 
convex set $W$ of $S^d$ (Lemma \ref{lemmas-conv}).
Since
$W_1, W_2$ are 
two nonempty closed  
spherical convex sets, 
by Lemma \ref{lemma2.1},
the following equality holds,
\[
W_i=\left(-(-W_i)\right)^{\circ\circ}=\left((-(-W_i))^\circ\right)^{\circ}=(-(-W_i^\circ))^\circ , i\in \{1,2\}.
\] 
Then it follows that, 
\[
\mathcal{X}=W_1\cap W_2=(W_1^\circ)^\circ \cap
(-(-W_2^\circ))^\circ.
\]
Since
\begin{eqnarray*}
  P\in  \mathcal{X}& \Longleftrightarrow &
   P\cdot Q\geq 0\ for\ any\ Q\ \in W_1^\circ\ and\ P\cdot R\geq 0\ for\ any\ R \in (-(-W_2^\circ))\\
           & \Longleftrightarrow & P\cdot Q\geq 0\ for\ any\ Q\ \in W_1^\circ\ and\ P\cdot R\leq 0\ for\ any\ R \in (-W_2^\circ),
\end{eqnarray*}
it follows that any point $P$ is an element of $\mathcal{X}$ if and only if $P$ is an element of 
the semi-separation center set 
$\mathcal{C}(W_1^\circ, -W_2^\circ)$ .
Therefore, $\widehat{\mathcal{X}}$ 
is the separation center set of $W_1^\circ$
and $-W_2^\circ$.
\end{proof}
\begin{proposition}\label{propo1}
Let $W_1, W_2$ be two nonempty closed 
and spherical convex subsets of $S^d$. Then the following hold:
\begin{enumerate} 
\item $\mathcal{C}(W_1, W_2)=-\mathcal{C}(W_2, W_1)=-\mathcal{C}(-W_1, -W_2),$
\item
$\mathcal{C}(W_1, W_2)\subset W_1^\circ$ and 
$ (-\mathcal{C}(W_1, W_2))\subset W_2^\circ$,
\item  
$W_1\subset \mathcal{C}(W_1, W_2)^\circ$ and 
$W_2\subset (-\mathcal{C}(W_1, W_2)^\circ)$, 
\item $\mathcal{C}(W_1, W_2)= W_1^\circ$ iff $-W_2\subset W_1$,
\item $\mathcal{C}(W_1, W_2)^\circ
=\mbox{ \rm s-conv}\left(W_1\cup (-W_2)\right)$,
\item $\mathcal{C}(W_1, W_2)
=(\mbox{ \rm s-conv}\left(W_1\cup (-W_2)\right))^\circ$.
\end{enumerate}
\end{proposition}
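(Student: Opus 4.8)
The plan is to establish the six items in sequence, using earlier ones and the lemmas on polar sets as building blocks. The recurring tool is the definitional equivalence $P\in \mathcal{C}(W_1,W_2)\iff P\cdot Q\geq 0$ for all $Q\in W_1$ and $P\cdot R\leq 0$ for all $R\in W_2$, which by negation of the second condition reads $P\cdot(-R)\geq 0$ for all $R\in W_2$, i.e. $P\cdot S\geq 0$ for all $S\in -W_2$. Thus the single identity
\[
\mathcal{C}(W_1,W_2)=W_1^\circ\cap(-W_2)^\circ
\]
will be the workhorse, and by Lemma \ref{lemma1} this equals $\left(W_1\cup(-W_2)\right)^\circ$ immediately, which is (6). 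For (5), apply the polar to both sides of (6) and use Lemma \ref{lemmas-conv} together with Lemma \ref{lemmasconv}: $\mathcal{C}(W_1,W_2)^\circ=\left(\left(W_1\cup(-W_2)\right)^\circ\right)^\circ=\left(W_1\cup(-W_2)\right)^{\circ\circ}=\mbox{s-conv}\left(W_1\cup(-W_2)\right)$, where the last step needs $W_1\cup(-W_2)$ to be hemispherical; I would note that this holds precisely when $\mathcal{C}(W_1,W_2)$ is nonempty (by Proposition \ref{theorem1} and the separation picture), and otherwise both sides are to be read appropriately — this is the one place deserving a careful remark rather than a routine line.

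For (1), the first equality $\mathcal{C}(W_1,W_2)=-\mathcal{C}(W_2,W_1)$ follows by swapping the roles of $W_1,W_2$ in the defining inequalities and replacing $P$ by $-P$: $P\cdot Q\geq 0$ on $W_1$ and $P\cdot R\leq 0$ on $W_2$ is equivalent to $(-P)\cdot R\geq 0$ on $W_2$ and $(-P)\cdot Q\leq 0$ on $W_1$, i.e. $-P\in\mathcal{C}(W_2,W_1)$. The second equality $\mathcal{C}(W_1,W_2)=-\mathcal{C}(-W_1,-W_2)$ is the same manipulation: $P\in\mathcal{C}(W_1,W_2)$ iff $(-P)\cdot(-Q)\geq 0$ on $W_1$ and $(-P)\cdot(-R)\leq 0$ on $W_2$, iff $-P\in\mathcal{C}(-W_1,-W_2)$. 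Item (2) is read straight off the workhorse identity: $\mathcal{C}(W_1,W_2)=W_1^\circ\cap(-W_2)^\circ\subset W_1^\circ$, and applying the central symmetry, $-\mathcal{C}(W_1,W_2)=-W_1^\circ\cap W_2^\circ\subset W_2^\circ$ (using Lemma \ref{lemma2.1} to move the minus sign through the polar). Item (3) is the "dual" inclusion: since $\mathcal{C}(W_1,W_2)\subset W_1^\circ$, Lemma \ref{lemmainclusion} gives $W_1^{\circ\circ}\subset\mathcal{C}(W_1,W_2)^\circ$, and $W_1\subset W_1^{\circ\circ}$ always (it is Lemma \ref{lemmas-conv} applied after taking spherical hull, or can be checked directly); the statement for $W_2$ follows by applying this to the pair $(W_2,W_1)$ and using (1).

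The only item requiring genuine argument is (4), the criterion $\mathcal{C}(W_1,W_2)=W_1^\circ\iff -W_2\subset W_1$. For the direction ($\Leftarrow$): if $-W_2\subset W_1$ then $W_1\cup(-W_2)=W_1$, so by (6), $\mathcal{C}(W_1,W_2)=W_1^\circ$. For ($\Rightarrow$): suppose $\mathcal{C}(W_1,W_2)=W_1^\circ$; take polars and use (5) to get $\mbox{s-conv}(W_1\cup(-W_2))=W_1^{\circ\circ}=\mbox{s-conv}(W_1)$ (the last by Lemma \ref{lemmas-conv}), hence every point of $-W_2$ lies in $\mbox{s-conv}(W_1)$. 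To upgrade this to $-W_2\subset W_1$ I would invoke that $W_1$ is already spherically convex and closed, so $\mbox{s-conv}(W_1)=W_1$; therefore $-W_2\subset W_1$. The main obstacle I anticipate is purely bookkeeping: making sure the hemisphericity hypotheses needed for Lemmas \ref{lemmasconv}, \ref{lemmas-conv} are in force (which they are once $\mathcal{C}(W_1,W_2)\neq\emptyset$, or can be arranged by Lemma \ref{lassaklemma} / Proposition \ref{theorem1} when the interiors are suitably disjoint), and handling the degenerate case $\mathcal{C}(W_1,W_2)=\emptyset$ consistently across items (4)–(6).
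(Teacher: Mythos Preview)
Your proposal is correct and follows essentially the same route as the paper, resting on the identity $\mathcal{C}(W_1,W_2)=W_1^\circ\cap(-W_2)^\circ$ together with Lemmas \ref{lemma2.1}, \ref{lemma1}, \ref{lemmainclusion}, \ref{lemmasconv}, and \ref{lemmas-conv}. The only notable difference is in item (4): the paper argues more directly that $W_1^\circ\cap(-W_2)^\circ=W_1^\circ$ iff $W_1^\circ\subset(-W_2)^\circ$, which by Lemma \ref{lemmainclusion} (and implicitly Lemma \ref{lemmas-conv} for the converse) is equivalent to $-W_2\subset W_1$, so your detour through (5) and (6) is unnecessary there; the paper also derives (6) from (5) rather than the other way around, but this is a cosmetic reordering.
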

\begin{proof}
(1) Easily follows from definition of $\mathcal{C}(W_1, W_2)$. 
\par
(2) By the definition of $\mathcal{C}(W_1, W_2)$, it follows that 
\[
\mathcal{C}(W_1, W_2)\subset W_1^\circ\ \mbox{and}\ \mathcal{C}(W_1, W_2)\subset (-W_2^\circ).
\] 
\par
(3) By the assertion (2), Lemma \ref{lemma2.1}, Lemma \ref{lemmainclusion} and  Lemma \ref{lemmas-conv}, 
we have the inclusion
\[
W_1\subset \mathcal{C}(W_1, W_2)^\circ \ \mbox{and}\ 
W_2\subset (-\mathcal{C}(W_1, W_2)^\circ).
\]
\par
(4) Since $\mathcal{C}(W_1, W_2)=W_1^{\circ}\cap (-W_2)^{\circ}$, 
it follows that $\mathcal{C}(W_1, W_2)=W_1^{\circ}$ 
if and only if $W_1^{\circ}\subset (-W_2)^{\circ}$.
By Lemma \ref{lemmainclusion}, the assertion (4) follows. 
\par
(5) From Proposition \ref{recog}, Lemma \ref{lemma1}, Lemma \ref{lemmasconv} and Lemma \ref{lemmas-conv}, 
we deduce that 
\[
\mathcal{C}(W_1, W_2)^\circ=(W_1^\circ \cap (-W_2)^\circ)^\circ
=\left(W_1\cup (-W_2)\right)^{\circ\circ}=\mbox{ \rm s-conv}\left(W_1\cup (-W_2)\right).
\]
\par
(6) It follows by the assertion (5) and Lemma \ref{lemmas-conv}.
\end{proof}
Notice that if we take $W_2=-W_1$, by the assertion (4) of Proposition \ref{propo1}, it follows that
\[
\mathcal{C}(W_1, -W_1)= W_1^\circ.
\]
This means the definition of the set $\mathcal{C}(W_1, W_2)$ 
is a generalization of the definition of spherical polarity.
From this observation, we have the following:
\begin{proposition}\label{theoremisometry}
The mapping 
\[
\mathcal{F}:\overline{\mathcal{H}(S^d, P)}\times \overline{\mathcal{H}(S^d, -P)}\to \overline{\mathcal{H}(S^d, P)}
\]
defined by
\[
\mathcal{F}(W, Y)=\mathcal{C}(W, Y)
\]
always satisfies the following inequality,
\[
h(\mathcal{F}(W_1, Y_1), \mathcal{F}(W_2, Y_2))
\leq 
\max\{h(W_1, W_2), h(Y_1, Y_2)\}. 
\]
Here, $h(W, Y)$ means the Pompeiu-Hausdorff metric of two non-empty compact set $W, Y$ of $S^d$.
\end{proposition}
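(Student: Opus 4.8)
The plan is to reduce the inequality for $\mathcal{F}(W,Y)=\mathcal{C}(W,Y)$ to the fact that the spherical polar transform $\bigcirc_P$ is an isometry (Theorem~\ref{isometry}), together with the identity $\mathcal{C}(W,Y)=W^\circ\cap(-Y)^\circ$ coming from Proposition~\ref{propo1}(6) (or directly from the definition). The key observation is that intersection of closed convex sets is $1$-Lipschitz with respect to the Pompeiu–Hausdorff metric: if $A_1,A_2,B_1,B_2$ are closed spherical convex bodies all containing a common interior point, then $h(A_1\cap B_1,\,A_2\cap B_2)\le \max\{h(A_1,A_2),h(B_1,B_2)\}$. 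So the strategy has three steps: (i) write $\mathcal{F}(W,Y)=W^\circ\cap(-Y)^\circ$; (ii) apply the isometry $h(W_1^\circ,W_2^\circ)=h(W_1,W_2)$ and, using $h(-Y_1,-Y_2)=h(Y_1,Y_2)$ together with the isometry, $h((-Y_1)^\circ,(-Y_2)^\circ)=h(Y_1,Y_2)$; (iii) combine via the Lipschitz bound for intersections.

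First I would verify that all the sets involved land in the right metric space: for $W\in\overline{\mathcal{H}(S^d,P)}$ and $Y\in\overline{\mathcal{H}(S^d,-P)}$, we have $W^\circ\in\overline{\mathcal{H}(S^d,P)}$ (this is part of Theorem~\ref{isometry}), and $-Y\in\overline{\mathcal{H}(S^d,P)}$ since negation swaps $H(P)$ and $H(-P)$, hence $(-Y)^\circ\in\overline{\mathcal{H}(S^d,P)}$ as well. Thus $W^\circ$ and $(-Y)^\circ$ are two convex bodies on $S^d$ both having $P$ in their interior, so their intersection is again a convex body with $P$ in its interior, and $\mathcal{F}$ is well-defined into $\overline{\mathcal{H}(S^d,P)}$. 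Then $\mathcal{C}(W,Y)=\{Q : Q\cdot R\ge 0\ \forall R\in W,\ Q\cdot R'\le 0\ \forall R'\in Y\}=W^\circ\cap(-Y)^\circ$ is immediate from unwinding the definition of $\circ$, exactly as in the computation preceding Lemma~\ref{lemma1} and in the proof of Proposition~\ref{recog}.

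The substantive step is the Lipschitz estimate for intersections. For two convex bodies $A,B$ with a common interior point $P$, one shows $A\cap B \subset B(A\cap B,\varepsilon)$ trivially, and for the other direction that $B((A\cap B),0)=A\cap B\subset B(A',\varepsilon)\cap B(B',\varepsilon)$ whenever $h(A,A'),h(B,B')\le\varepsilon$, then invokes the standard fact that for convex bodies sharing an interior point $B(A',\varepsilon)\cap B(B',\varepsilon)\subset B(A'\cap B',\varepsilon)$; here the presence of the common interior point is exactly what prevents the intersection from collapsing and makes the constant $1$ rather than something larger. On the sphere one must be slightly careful that balls $B(\cdot,\varepsilon)$ with $\varepsilon<\pi/2$ behave well, which is where an argument in the spirit of Lemma~\ref{lemmaobvi} is used; the uniform lower bound on the inradius guaranteed by "$P\in\mathrm{int}$" for every set in $\overline{\mathcal{H}(S^d,P)}$ keeps everything in the hemisphere. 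Putting the pieces together: $h(\mathcal{F}(W_1,Y_1),\mathcal{F}(W_2,Y_2)) = h(W_1^\circ\cap(-Y_1)^\circ,\,W_2^\circ\cap(-Y_2)^\circ) \le \max\{h(W_1^\circ,W_2^\circ),\,h((-Y_1)^\circ,(-Y_2)^\circ)\} = \max\{h(W_1,W_2),\,h(Y_1,Y_2)\}$.

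I expect the main obstacle to be the spherical version of the intersection Lipschitz lemma: in Euclidean space the bound $h(A_1\cap B_1,A_2\cap B_2)\le\max\{h(A_1,A_2),h(B_1,B_2)\}$ for convex bodies with a common interior point is classical but still requires care, and transporting it to $S^d$ needs the hemisphericity constraints to control the metric; this is precisely the role of the spaces $\mathcal{H}(S^d,\pm P)$ and why the statement is phrased over their closures. If a clean self-contained proof of that lemma is unavailable, the fallback is to prove it on the tangent plane via the central projection $\alpha_P$ (which is bi-Lipschitz on any compact piece of the open hemisphere bounded away from the equator) and then transfer back, at the cost of tracking the distortion constants — but since the claimed inequality has constant $1$ and a non-strict $\le$, one would need the projection argument to be done carefully or, better, to carry out the estimate intrinsically on $S^d$ using geodesic convexity.
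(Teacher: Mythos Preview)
Your overall architecture matches the paper's: write $\mathcal{F}(W,Y)=W^\circ\cap(-Y)^\circ$, invoke the polar isometry (Theorem~\ref{isometry}) to get $h(W_1^\circ,W_2^\circ)=h(W_1,W_2)$ and $h((-Y_1)^\circ,(-Y_2)^\circ)=h(Y_1,Y_2)$, and then control the Hausdorff distance of the intersections. The divergence is entirely in that last step, and there is a genuine gap.

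The ``standard fact'' you invoke, namely that for convex bodies with a common interior point one has $B(A',\varepsilon)\cap B(B',\varepsilon)\subset B(A'\cap B',\varepsilon)$ (and hence that intersection is $1$-Lipschitz in Hausdorff distance), is \emph{false} in Euclidean space. Take $A'=\{x+y\le 0\}$ and $B'=\{x-y\le 0\}$ in $\mathbb{R}^2$; they share the interior point $(-1,0)$. Then $\overline{B(A',\varepsilon)}\cap\overline{B(B',\varepsilon)}=\{x+y\le\varepsilon\sqrt2\}\cap\{x-y\le\varepsilon\sqrt2\}$ contains $(\varepsilon\sqrt2,0)$, whose distance to $A'\cap B'=\{x\le -|y|\}$ is $\varepsilon\sqrt2>\varepsilon$. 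Equivalently, setting $A_1=\overline{B(A',\varepsilon)},\,A_2=A',\,B_1=\overline{B(B',\varepsilon)},\,B_2=B'$ gives $h(A_1\cap B_1,A_2\cap B_2)\ge\varepsilon\sqrt2>\varepsilon=\max\{h(A_1,A_2),h(B_1,B_2)\}$. So your proposed route through a general convexity lemma---and in particular your fallback of proving the Euclidean version and transporting it by central projection---cannot succeed with constant~$1$.

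What the paper does instead is exploit the special representation of polar sets as intersections of \emph{hemispheres}. It imports Lemma~\ref{inteq} from \cite{hannishimura}, which (for $W$ with $W^\circ$ spherical convex) lets one commute the $a$-neighborhood with the intersection $\bigcap_{P\in W}H(P)$; Lemma~\ref{inteq1} then packages this as the exact identity
\[
\overline{B(W^\circ,a)}\cap\overline{B(-Y^\circ,a)}=\overline{B(W^\circ\cap(-Y)^\circ,a)}.
\]
With this identity in hand, from $W_1^\circ\cap(-Y_1)^\circ\subset\overline{B(W_2^\circ,a)}\cap\overline{B(-Y_2^\circ,a)}$ one gets directly $\mathcal{C}(W_1,Y_1)\subset\overline{B(\mathcal{C}(W_2,Y_2),a)}$, and symmetrically. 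The point is that this is not a general convexity fact but a structural property of families of hemispheres on $S^d$; your proposal does not identify this and instead leans on a Euclidean statement that fails.
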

By the definition of $\mathcal{F}$, Theorem \ref{isometry} can be stated as following:
The mapping 
\[
\overline{\mathcal{F}}:\overline{\mathcal{H}(S^d, P)}\to \overline{\mathcal{H}(S^d, P)}
\]
defined by
\[
\overline{\mathcal{F}}(W)=\mathcal{F}(W, -W)= W^\circ,
\]
is an isometry with respect to Pompeiu-Hausdorff metric.
\begin{example}
\begin{enumerate}
\item If $Y_1=-W_1, Y_2=-W_2$, then we have the equality
\begin{align*}
h(\mathcal{F}(W_1, Y_1), \mathcal{F}(W_2, Y_2))&=h(\mathcal{F}(W_1, -W_1), \mathcal{F}(W_2, -W_2))
\\
&=h(W_1^\circ, W_2^\circ)=h(W_1, W_2)
\\
&=
\max\{h(W_1, W_2), h(Y_1, Y_2)\}. 
\end{align*}
\item If $W_1\neq W_2, Y_1=Y_2$ and $W_1^\circ\cap W_2^\circ=-Y_1^\circ$, then we have the inequality
\begin{align*}
0&=h(-Y_1^\circ, -Y_1^\circ)=h(W_1^\circ\cap-Y_1^\circ, W_2^\circ\cap -Y_1^\circ)=
h(\mathcal{F}(W_1, Y_1), \mathcal{F}(W_2, Y_2))\\
&=\min\{h(W_1, W_2), h(Y_1, Y_1)\}< \max\{h(W_1, W_2), h(Y_1, Y_2)\}.
\end{align*}
\item  If $W_1=W_2, Y_1\neq Y_2, -(Y_1^\circ\cup Y_2^\circ)\subset W_1^\circ$ and 
$Y_1^\circ\cap Y_2^\circ$ with non-empty interior, then 
it follows that
\begin{align*}
0 &=\min\{h(W_1, W_2), h(Y_1, Y_2)\}\\
&<\max\{h(W_1, W_2), h(Y_1, Y_2)\}
=h(Y_1, Y_2) \\
&=h(W_1^\circ\cap -Y_1^\circ, W_2^\circ\cap -Y_2^\circ)
=
h(\mathcal{F}(W_1, Y_1), \mathcal{F}(W_2, Y_2)).
\end{align*}
\end{enumerate}
\end{example}
The following Lemmas are needed for the proof of Proposition \ref{theoremisometry}. 
\begin{lemma}[\cite{hannishimura}]\label{inteq}
For any $W \subset S^d$ such that $W^\circ$ is a spherical convex set and any $0<r<\pi/2$,
\[
\overline{
B\left(
\bigcap_{P\in W}H(P, r)
\right)
}
=
\bigcap_{P\in W}
\overline{B(H(P), r)}.
\]
\end{lemma}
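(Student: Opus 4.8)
The plan is to prove the two inclusions separately, using Lemma \ref{lemmaobvi} to rewrite every neighborhood of a hemisphere as a union of hemispheres, so that both sides become explicit systems of inner-product inequalities. First I would record the pointwise description of the right-hand side: applying Lemma \ref{lemmaobvi} with $\varepsilon=r$, together with the fact that each $H(P_1)$ is closed, gives
\[
\overline{B(H(P),r)}=\bigcup_{P_1\in \overline{B(P,r)}}H(P_1)=\{Q\in S^d\mid P\cdot Q\geq -\sin r\},
\]
so that $\bigcap_{P\in W}\overline{B(H(P),r)}=\{Q\in S^d\mid P\cdot Q\geq -\sin r\ \mbox{for all}\ P\in W\}$. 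The inclusion ``$\subseteq$'' is the routine direction: since $\bigcap_{P\in W}H(P,r)$ is contained in each $H(P_0,r)$, monotonicity of the neighborhood operator under inclusion yields $\overline{B(\bigcap_{P\in W}H(P,r))}\subseteq \overline{B(H(P_0,r))}$ for every $P_0\in W$, and intersecting over $P_0$ gives the claim; here I only need that passing to a neighborhood preserves inclusions, which is immediate.

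The substance is the reverse inclusion ``$\supseteq$,'' and the reason it is not automatic is that forming an $r$-neighborhood does \emph{not} commute with intersection in general: a point may lie within $r$ of every hemisphere $H(P)$ while lying strictly farther than $r$ from the common intersection $\bigcap_{P\in W}H(P)=W^\circ$. This is exactly the phenomenon that the hypothesis ``$W^\circ$ is a spherical convex set'' is designed to exclude. Concretely, I would take a point $Q$ of the right-hand side, let $X^{*}$ be its nearest point in the convex core $\bigcap_{P\in W}H(P,r)$, and consider a supporting hemisphere of this core at $X^{*}$. Using polarity (Lemma \ref{lemma1}, Lemma \ref{lemmasconv} and Lemma \ref{lemmas-conv}), the center of that supporting hemisphere lies in $\mbox{\rm s-conv}(W)$, i.e.\ it is a spherical convex combination $\frac{\sum\lambda_iP_i}{||\sum\lambda_iP_i||}$ of points $P_i\in W$. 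Then, reading Lemma \ref{lemmaobvi} in the form ``a point at spherical distance $\leq r$ from $H(P)$ lies in $H(P_1)$ for some $P_1\in\overline{B(P,r)}$,'' I would convert each hypothesis $P_i\cdot Q\geq -\sin r$ into membership of $Q$ in a hemisphere whose center is within $r$ of $P_i$, push this through the convex combination, and conclude that the spherical distance from $Q$ to $X^{*}$ is at most $r$, which places $Q$ in the left-hand side.

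The hard part will be this last distance estimate through the supporting hemisphere. The difficulty is that the normalization $||\sum\lambda_iP_i||$ in a spherical convex combination distorts the inner products $P_i\cdot Q$, so a naive averaging of the inequalities $P_i\cdot Q\geq -\sin r$ is not tight enough to force the bound $\arccos(Q\cdot X^{*})\leq r$ by itself. The convexity of $W^\circ$ is precisely what repairs this: it guarantees that the normal cone of the core at $X^{*}$ is generated by the constraints active at $X^{*}$, so the correct supporting hemisphere is realized \emph{within} the family $\{H(P_1)\mid P_1\in\overline{B(P,r)},\ P\in W\}$ supplied by Lemma \ref{lemmaobvi}, rather than merely approximated by it, and the distance relation becomes an equality at $X^{*}$. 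I therefore expect the bulk of the work, and the only place where the convexity hypothesis is genuinely used, to be in verifying that this supporting hemisphere can be chosen from that family and that the resulting inequality is sharp enough to yield $\arccos(Q\cdot X^{*})=-\arcsin(-1)\cdots\leq r$; once that is in hand, both inclusions close and the equality follows.
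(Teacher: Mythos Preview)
The paper does not prove this lemma: it is quoted from \cite{hannishimura} and invoked as a black box in the proof of Lemma~\ref{inteq1}, so there is no in-paper argument to compare your proposal against. (Note also that ``$H(P,r)$'' on the left-hand side is a misprint; the intended left side is $\overline{B\bigl(\bigcap_{P\in W}H(P),\,r\bigr)}=\overline{B(W^{\circ},r)}$, as the application in Lemma~\ref{inteq1} makes clear.)

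On the substance of your outline: the difficulty you isolate is genuine, and under the hypothesis as \emph{literally} written it cannot be repaired. With $N=\frac{\sum\lambda_iP_i}{\lVert\sum\lambda_iP_i\rVert}\in\mbox{s-conv}(W)$ and $P_i\cdot Q\geq-\sin r$, dividing by $\lVert\sum\lambda_iP_i\rVert\le1$ only yields $N\cdot Q\geq-\sin r/\lVert\sum\lambda_iP_i\rVert$, which is too weak. In fact the identity fails if one merely assumes that $W^{\circ}$ is spherically convex: on $S^{1}$ take $W=\{P_1,P_2\}$ with $P_i=(\cos\tfrac{\pi}{3},\pm\sin\tfrac{\pi}{3})$ and $r=\tfrac{\pi}{4}$; then $W^{\circ}$ is the arc $[-\tfrac{\pi}{6},\tfrac{\pi}{6}]$ (hemispherical and convex), while $Q=(-1,0)$ satisfies $P_i\cdot Q=-\tfrac12\geq-\sin r$ for $i=1,2$ yet lies at spherical distance $\tfrac{5\pi}{6}>r$ from $W^{\circ}$. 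What rescues the statement in every use the present paper makes of it is that there $W$ itself is a closed spherically convex set (a member of $\overline{\mathcal{H}(S^d,P)}$), so $\mbox{s-conv}(W)=W$ and the supporting normal $N^{*}$ at the nearest point $X^{*}$ already belongs to $W$; your nearest-point/supporting-hemisphere scheme then closes immediately, since $N^{*}\cdot Q\geq-\sin r$ gives $d(Q,W^{\circ})=\arcsin(-N^{*}\cdot Q)\leq r$. So your plan is essentially correct once the hypothesis is read as convexity of $W$ rather than of $W^{\circ}$; the appeal to ``convexity of $W^{\circ}$'' in your final paragraph cannot do the work you ask of it, and the closing formula ``$\arccos(Q\cdot X^{*})=-\arcsin(-1)\cdots\leq r$'' is not a meaningful computation.
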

\begin{lemma}\label{inteq1}
Let $W$ (resp.,$Y$)be a subset of $\overline{\mathcal{H}(S^d, P)}$ (resp., $\overline{\mathcal{H}(S^d, -P)}$).  
Let $0<a<\pi/2$ be a positive number. 
Then the following equality holds,
\[
\overline{B(W^\circ, a)}\cap \overline{B(-Y^\circ, a)}=\overline{B(W^\circ\cap -Y^\circ, a)}.
\]
\end{lemma}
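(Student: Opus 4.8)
The plan is to reduce the identity to iterated polarity and then feed it into Lemma~\ref{inteq}. The inclusion $\overline{B(W^\circ\cap -Y^\circ,a)}\subseteq\overline{B(W^\circ,a)}\cap\overline{B(-Y^\circ,a)}$ is immediate, since $A\subseteq B$ forces $\overline{B(A,a)}\subseteq\overline{B(B,a)}$ and $W^\circ\cap -Y^\circ$ is contained in each of $W^\circ$ and $-Y^\circ$; so all the work is the reverse inclusion, which the argument below will in fact deliver as an equality in one stroke.

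First I would rewrite the three polar sets occurring in the statement as intersections of hemispheres. By definition $W^\circ=\bigcap_{Q\in W}H(Q)$, and by Lemma~\ref{lemma2.1} we have $-Y^\circ=(-Y)^\circ=\bigcap_{R\in -Y}H(R)$; moreover, combining Lemma~\ref{lemma2.1} with Lemma~\ref{lemma1},
\[
W^\circ\cap -Y^\circ=W^\circ\cap(-Y)^\circ=\bigl(W\cup(-Y)\bigr)^\circ=\bigcap_{T\in W\cup(-Y)}H(T).
\]
Thus each of $W^\circ$, $-Y^\circ$, $W^\circ\cap -Y^\circ$ has the form $\bigcap_{P\in\mathcal{W}}H(P)$ with $\mathcal{W}$ equal to $W$, $-Y$, and $W\cup(-Y)$ respectively — precisely the shape to which Lemma~\ref{inteq} applies.

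Next I would apply Lemma~\ref{inteq} with $r=a$ (legitimate since $0<a<\pi/2$) to each of these three index sets, obtaining
\[
\overline{B(W^\circ,a)}=\bigcap_{Q\in W}\overline{B(H(Q),a)},\qquad
\overline{B(-Y^\circ,a)}=\bigcap_{R\in -Y}\overline{B(H(R),a)},
\]
\[
\overline{B(W^\circ\cap -Y^\circ,a)}=\bigcap_{T\in W\cup(-Y)}\overline{B(H(T),a)}.
\]
Intersecting the first two and regrouping the union index set in the third then gives
\begin{align*}
\overline{B(W^\circ,a)}\cap\overline{B(-Y^\circ,a)}
&=\Bigl(\bigcap_{Q\in W}\overline{B(H(Q),a)}\Bigr)\cap\Bigl(\bigcap_{R\in -Y}\overline{B(H(R),a)}\Bigr)\\
&=\bigcap_{T\in W\cup(-Y)}\overline{B(H(T),a)}=\overline{B(W^\circ\cap -Y^\circ,a)},
\end{align*}
which is the asserted equality.

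The step I expect to be the main obstacle is checking the hypothesis of Lemma~\ref{inteq}, namely that $W^\circ$, $(-Y)^\circ$, and $\bigl(W\cup(-Y)\bigr)^\circ$ are spherical convex sets. For $W\in\overline{\mathcal{H}(S^d,P)}$ the set $W^\circ$ is a polar of a hemispherical set, hence spherical convex; hemisphericity follows from $P\in W$ together with $W\cap H(-P)=\emptyset$ (with the usual passage to closures for the limit elements of $\overline{\mathcal{H}}$), and symmetrically for $(-Y)^\circ=-Y^\circ$ using $-P\in Y$. For $\bigl(W\cup(-Y)\bigr)^\circ=W^\circ\cap(-Y)^\circ$ I would note that an intersection of two spherical convex sets is again spherical convex: it is geodesically convex as an intersection of such, and it inherits hemisphericity from $W^\circ$. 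A minor, purely bookkeeping matter is to keep the conventions for $B(\cdot,a)$ and its closure consistent across the three applications; since $0<a<\pi/2$ we are exactly in the range in which Lemmas~\ref{lemmaobvi} and~\ref{inteq} are valid, so no boundary phenomena intervene.
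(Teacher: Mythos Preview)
Your argument is correct and follows essentially the same route as the paper's own proof: both rewrite $W^\circ$, $-Y^\circ$, and $W^\circ\cap -Y^\circ=(W\cup(-Y))^\circ$ as intersections of hemispheres, apply Lemma~\ref{inteq} to pass the $a$-neighborhood through each such intersection, and then regroup over the union index set, invoking Lemmas~\ref{lemma2.1} and~\ref{lemma1} for the final identification. Your extra paragraph checking the spherical-convexity hypothesis of Lemma~\ref{inteq} is a welcome addition that the paper leaves implicit.
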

\begin{proof}
By Lemma \ref{inteq}, it follows that
 \begin{align*}
\overline{B(W^\circ, a)}\cap \overline{B(-Y^\circ, a)}
&=
\overline{
B\left(
\bigcap_{P\in W}H(P), a
\right)}
\cap
\overline{
B\left(
\bigcap_{Q\in -Y}H(Q), a
\right)}\\
&=
\bigcap_{P\in W}
\overline{B(H(P), a)}
\cap
\bigcap_{Q\in -Y}
\overline{B(H(Q), a)}\\
&=\bigcap_{P\in (W\cup (-Y))}\overline{B(H(P), a)}\\
&=\overline{B(\bigcap_{P\in (W\cup (-Y))}H(P), a)}\\
&=\overline{B((W\cup (-Y))^\circ, a)}.
 \end{align*}
Then by Lemma \ref{lemma1} and Lemma \ref{lemma2.1}, we know that
\[
\overline{B(W^\circ, a)}\cap \overline{B(-Y^\circ, a)}=\overline{B(W^\circ\cap -Y^\circ, a)}.
\]
\end{proof}
Now we are in the position to prove Proposition \ref{theoremisometry}.
\begin{proof} 
Set 
$
\max\{h(W_1, W_2), h(Y_1, Y_2)\}=a.
$
By Theorem \ref{isometry}, it follows that
\[
h(W_1, W_2)=h(W_1^\circ, W_2^\circ)\leq a \ \mbox{and}\ h(Y_1, Y_2)= h(Y_1^\circ, Y_2^\circ)\leq a.
\]
Let $P$ be a point of $\mathcal{C}(W_1, Y_1)$. Since $\mathcal{C}(W_1, Y_1)=W_1^{\circ}\cap (-Y_1^\circ)$, we know that
\[
P\in W_1^{\circ}\cap (-Y_1^\circ).
\]
Then by Lemma \ref{inteq1}, it follows that 
\[
P\in \overline{B(W_2^\circ, a)}\cap \overline{B(-Y_2^\circ, a)}=\overline{B(W_2^\circ\cap -Y_2^\circ, a)}.
\]
Therefore, we have the following inclusion
\[
\tag{$1$}
\mathcal{C}(W_1, Y_1)=
\left(W_1^{\circ}\cap (-Y_1^\circ)\right)
\subset 
\left(
\overline{B(W_2^\circ\cap -Y_2^\circ, a)}
\right)
=\overline{B(\mathcal{C}(W_2, Y_2), a)}.
\] 
In similar, we have the inclusion 
\[
\tag{$2$}
\mathcal{C}(W_2, Y_2)=
\left(W_2^{\circ}\cap (-Y_2^\circ)\right)
\subset 
\left(
\overline{B(W_1^\circ\cap -Y_1^\circ, a)}
\right)
=\overline{B(\mathcal{C}(W_1, Y_1), a)}.
\] 
Thus, by (1) and (2), we have the inequality
\[
h(\mathcal{C}(W_1, Y_1), \mathcal{C}(W_2, Y_2))
\leq 
a=
\max\{h(W_1, W_2), h(Y_1, Y_2)\}.
\]
\end{proof}
A spherical convex body $W$ is said to be {\it self-dual} if $W=W^\circ$.   
By the assertion (4) of Proposition \ref{propo1}, 
a spherical convex body $W$ is self-dual if and only if  
$W=\mathcal{C}(W, -W)$. 
It is known that $W$ is self-dual if and only if $W$ is of constant width $\pi/2$ if and only if 
$W$ is of constant diameter $\pi/2$.
For more details on width, thickness and diameter of spherical convex bodies see for instance \cite{lassak2015}--\cite{lassak20}, \cite{michal21}.
\begin{proposition}\label{propo3}
Let $\mathcal{X}=\mathcal{C}(W_i, Y_i), i\in \Lambda$. 
Then $\widehat{\mathcal{X}}$ 
is the separation center set of 
$\cup_{i\in \Lambda} W_i$ and $\cup_{i\in \Lambda} Y_i$.
\end{proposition}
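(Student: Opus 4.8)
The plan is to reduce the statement to spherical polar sets together with the (immediate) generalization of Lemma \ref{lemma1} to an arbitrary index set. First I would record the identity
\[
\mathcal{C}(A, B) = A^\circ \cap (-B)^\circ,
\]
valid for \emph{any} nonempty subsets $A, B \subset S^d$; this is exactly the observation already used in the proof of Proposition \ref{propo1}(4), and it follows at once from the definition of $\mathcal{C}(A,B)$, since the condition $P\cdot R \leq 0$ for all $R\in B$ is equivalent to $(-P)\cdot R\geq 0$ for all $R\in B$, i.e. to $P\cdot R'\geq 0$ for all $R'\in -B$, that is, to $P\in(-B)^\circ$. No convexity or closedness of $A, B$ is needed here.

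Next I would observe that the short computation displayed right after Lemma \ref{lemma1} goes through verbatim for an arbitrary family: for nonempty subsets $\{A_i\}_{i\in\Lambda}$ of $S^d$,
\[
\bigcap_{i\in\Lambda} A_i^\circ = \Bigl(\bigcup_{i\in\Lambda} A_i\Bigr)^\circ,
\]
because $P\cdot Q\geq 0$ for every $Q\in A_i$ and every $i\in\Lambda$ says precisely that $P\cdot Q\geq 0$ for every $Q\in\bigcup_{i\in\Lambda}A_i$. Likewise, the proof of Proposition \ref{propo1}(1) never uses that there are only two sets, so $\mathcal{C}(B, A) = -\mathcal{C}(A, B)$ holds for arbitrary $A, B$.

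With these ingredients the proof is a direct calculation. Writing $W := \bigcup_{i\in\Lambda} W_i$ and $Y := \bigcup_{i\in\Lambda} Y_i$, and using $-Y = \bigcup_{i\in\Lambda}(-Y_i)$, I would compute
\[
\mathcal{C}(W, Y) = W^\circ \cap (-Y)^\circ
= \Bigl(\bigcap_{i\in\Lambda} W_i^\circ\Bigr)\cap\Bigl(\bigcap_{i\in\Lambda}(-Y_i)^\circ\Bigr)
= \bigcap_{i\in\Lambda}\bigl(W_i^\circ\cap(-Y_i)^\circ\bigr)
= \bigcap_{i\in\Lambda}\mathcal{C}(W_i, Y_i) = \mathcal{X},
\]
the last equality being the hypothesis $\mathcal{X}=\mathcal{C}(W_i, Y_i)$ for every $i\in\Lambda$. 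By the generalized form of Proposition \ref{propo1}(1), $\mathcal{C}(Y, W) = -\mathcal{C}(W, Y) = -\mathcal{X}$. Hence the separation center set of $W$ and $Y$, which by the definition in Section 2 is $\mathcal{C}(W, Y)\cup\mathcal{C}(Y, W)$, equals $\mathcal{X}\cup(-\mathcal{X}) = \widehat{\mathcal{X}}$, as claimed.

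As for the main point of care: there is no substantive obstacle, but when $\Lambda$ is infinite the union $\bigcup_{i\in\Lambda}W_i$ need not be closed (nor even hemispherical), so one should note that the polar operation $(\cdot)^\circ$ and the identity $\mathcal{C}(A,B)=A^\circ\cap(-B)^\circ$ are purely set-theoretic and require no such hypotheses; the closedness and convexity of the individual $W_i$ and $Y_i$ play no role in the argument. If one prefers to remain strictly within the setting of Section 2, it suffices to take $\Lambda$ finite, in which case $W$ and $Y$ are genuine closed sets and the statement is literally about their separation center set.
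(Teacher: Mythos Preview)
Your proof is correct and follows essentially the same approach as the paper: both arguments amount to showing $\mathcal{X}=\mathcal{C}\bigl(\bigcup_{i\in\Lambda}W_i,\bigcup_{i\in\Lambda}Y_i\bigr)$ by unwinding the defining inequalities, with the paper doing this by a direct element-chase and you by the equivalent polar-set identities $\mathcal{C}(A,B)=A^\circ\cap(-B)^\circ$ and $\bigcap_i A_i^\circ=(\bigcup_i A_i)^\circ$. Your formulation is in fact a bit tidier, and your remark that no closedness or convexity hypotheses are actually used is well taken.
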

\begin{proof}
By the definition, it is sufficient to prove that 
\[
\mathcal{X}=\left(\cup_{i\in \Lambda} W_i\right)^\circ\cap\left(\cup_{i\in \Lambda} Y_i\right)^\circ.
\]
Let $P$ be a point of $\mathcal{X}$. Then, by the assumption it follows that 
$P\cdot Q\geq 0$ for any $Q\in W_i$ 
and $P\cdot R\leq 0$ for any $R\in Y_i$, 
where $i\in \Lambda$.
Therefore, $\mathcal{X}$ is a subset of 
$\left(\cup_{i\in \Lambda} W_i\right)^\circ\cap\left(\cup_{i\in \Lambda} Y_i\right)^\circ$. 
Conversely, let $P$ be a point of $\left(\cup_{i\in \Lambda} W_i\right)^\circ\cap\left(\cup_{i\in \Lambda} Y_i\right)^\circ.$ 
Then it is clear that $P$ is a separation center of $\cup_{i\in \Lambda} W_i$ and 
$\cup_{i\in \Lambda} Y_i$. 
This completes the proof.
\end{proof}
\begin{corollary}\label{coro1}
Let $\mathcal{X}=W_i\cap Y_i$, where $W_i, Y_i$ are spherical convex sets and $i=1, \dots, m$. 
We have the following:
\begin{enumerate}
\item
$\widehat{\mathcal{X}}$ is the separation center set of 
$\cup_{i\in \Lambda} W_i^\circ$ and $\cup_{i\in \Lambda} (-Y_i^\circ)$.
\item $\widehat{\mathcal{X}}$ is the separation center set of 
\[
\mbox{\rm s-conv} (\cup_{i\in \Lambda} W_i^\circ)\ \mbox{and}\ \mbox{\rm s-conv} (\cup_{i\in \Lambda} (-Y_i^\circ)).
\]
\end{enumerate}
\end{corollary}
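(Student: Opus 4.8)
\medskip
\noindent\emph{Proof sketch.} The plan is to derive Corollary \ref{coro1} directly from Proposition \ref{recog} and Proposition \ref{propo3}, together with the elementary behaviour of the polar operation under finite unions and under spherical convex hulls. Write $\Lambda=\{1,\dots,m\}$ and read the hypothesis as $\mathcal{X}=\bigcap_{i\in\Lambda}(W_i\cap Y_i)$, in line with the convention of Proposition \ref{propo3}. Recall also that for closed sets $A,B\subset S^d$ the separation center set of $A,B$ equals $\widehat{\mathcal{C}(A,B)}$, where $\mathcal{C}(A,B)=A^\circ\cap(-B)^\circ$.

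For assertion (1) I would first invoke the identity established inside the proof of Proposition \ref{recog}: for each $i\in\Lambda$,
\[
W_i\cap Y_i=\mathcal{C}(W_i^\circ,-Y_i^\circ),
\]
which rests on $W_i=W_i^{\circ\circ}$ and $Y_i=Y_i^{\circ\circ}$ (Lemma \ref{lemma2.1} and Lemma \ref{lemmas-conv}). Intersecting over $i$ gives $\mathcal{X}=\bigcap_{i\in\Lambda}\mathcal{C}(W_i^\circ,-Y_i^\circ)$, and Proposition \ref{propo3}, applied with index set $\Lambda$ to the families $\{W_i^\circ\}_{i\in\Lambda}$ and $\{-Y_i^\circ\}_{i\in\Lambda}$, immediately yields that $\widehat{\mathcal{X}}$ is the separation center set of $\cup_{i\in\Lambda}W_i^\circ$ and $\cup_{i\in\Lambda}(-Y_i^\circ)$. (Alternatively, one computes $\mathcal{C}(\cup_{i\in\Lambda}W_i^\circ,\cup_{i\in\Lambda}(-Y_i^\circ))=(\cup_{i\in\Lambda}W_i^\circ)^\circ\cap(\cup_{i\in\Lambda}Y_i^\circ)^\circ$ and applies the finite-union form of Lemma \ref{lemma1}, which reduces the right-hand side to $\bigcap_{i\in\Lambda}W_i\cap\bigcap_{i\in\Lambda}Y_i=\mathcal{X}$.)

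For assertion (2) I would show that replacing each separating set in (1) by its spherical convex hull does not alter the separation center set. By Lemma \ref{lemmasconv} we have $(\mbox{\rm s-conv}(V))^\circ=V^\circ$, and by Lemma \ref{lemma2.1} we have $(-\mbox{\rm s-conv}(V))^\circ=-(\mbox{\rm s-conv}(V))^\circ=-V^\circ=(-V)^\circ$; hence the set $\mathcal{C}(A,B)=A^\circ\cap(-B)^\circ$ is unchanged when $A$ and $B$ are replaced by $\mbox{\rm s-conv}(A)$ and $\mbox{\rm s-conv}(B)$. Taking $A=\cup_{i\in\Lambda}W_i^\circ$ and $B=\cup_{i\in\Lambda}(-Y_i^\circ)$ and combining with assertion (1) gives assertion (2).

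I expect the only points needing care to be bookkeeping rather than substance: the finite-union extension $(\cup_{i\in\Lambda}V_i)^\circ=\bigcap_{i\in\Lambda}V_i^\circ$ of Lemma \ref{lemma1} (immediate by induction on $m$, or read straight off the defining inequalities); the non-symmetry of $\mathcal{C}$ in its two arguments, which makes the minus signs on the $Y_i$'s essential throughout; and the verification that the unions $\cup_{i\in\Lambda}W_i^\circ$ and $\cup_{i\in\Lambda}(-Y_i^\circ)$ are hemispherical, so that Lemmas \ref{lemma2.1} and \ref{lemmasconv} apply — this follows from Lemma \ref{lemmainclusion}, since $\mathcal{X}\subset W_i$ forces $W_i^\circ\subset\mathcal{X}^\circ$ and $\mathcal{X}^\circ$ is hemispherical whenever $\mathcal{X}$ has nonempty interior, the flat case being treated directly. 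None of this is a genuine obstacle: all the content of the corollary is already carried by Proposition \ref{recog} and Proposition \ref{propo3}, and part (2) merely records that spherical convexification of the separating sets is harmless.
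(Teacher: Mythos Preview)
Your argument for part (1) is exactly the paper's: apply Proposition \ref{recog} to each $i$ to get $\mathcal{X}=\mathcal{C}(W_i^\circ,-Y_i^\circ)$, then feed these into Proposition \ref{propo3}. (One small remark: the hypothesis, like that of Proposition \ref{propo3}, is that $\mathcal{X}=W_i\cap Y_i$ for \emph{each} $i$, not merely that $\mathcal{X}$ is the intersection over $i$; your invocation of Proposition \ref{propo3} in fact uses this stronger reading, so the phrasing ``read the hypothesis as $\mathcal{X}=\bigcap_i(W_i\cap Y_i)$'' is slightly off, though harmless.)

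For part (2) you take a different route from the paper. The paper argues elementwise: for $P\in\mathcal{X}$ it checks by hand that $P\cdot(tQ_i+(1-t)Q_j)\ge 0$ on convex combinations of points of $\cup_i W_i^\circ$ (and similarly on the $-Y_i^\circ$ side), and for the reverse inclusion it unwinds $P\in(W_i^\circ)^\circ=W_i$ via Lemma \ref{lemmas-conv}. You instead observe once and for all that $\mathcal{C}(A,B)=A^\circ\cap(-B)^\circ$ is invariant under replacing $A,B$ by their spherical convex hulls, because $(\mbox{\rm s-conv}(V))^\circ=V^\circ$ (Lemma \ref{lemmasconv}) and $(-V)^\circ=-V^\circ$ (Lemma \ref{lemma2.1}). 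This is correct and is the cleaner path: it packages the paper's convex-combination computation into a single appeal to an already-stated lemma, and it makes transparent that part (2) adds nothing to part (1) beyond the stability of polars under $\mbox{\rm s-conv}$. The paper's version has the minor virtue of being self-contained at the level of inner products, but yours is shorter and better isolates the mechanism.
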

\begin{proof}
(1) By Proposition \ref{recog}, $\widehat{\mathcal{X}}$ is the separation center set of $W_i^\circ$ and 
$-Y^\circ, i\in \Lambda$. Namely,
$\mathcal{X}=\mathcal{C}(W_i^\circ, -Y_i^\circ)$ 
for any $i\in \Lambda$.
By Proposition \ref{propo3}, $\widehat{\mathcal{X}}$ is the separation center set of 
$\cup_{i\in \Lambda} W_i^\circ$ 
and $\cup_{i\in \Lambda} (-Y_i^\circ)$.
\par
(2) Let $P$ be a point of $\mathcal{X}$.
By the assertion (1) of Corollary \ref{coro1}, we know that
 $P\cdot Q_i\geq 0$ for any $Q_i\in W_i^\circ$ and $P\cdot R_i\leq 0$ for any 
   $R_i \in (-Y_i^\circ)$,
where $i\in \Lambda$. 
This implies if $P$ is a point of $\mathcal{X}$
then $P\cdot (tQ_i+(1-t)Q_j)\geq 0$ and $P\cdot (tR_i+(1-t)R_j)\leq 0$,
for any $Q_i\in W_i^\circ, Q_j\in W_j^\circ $ and $R_i \in (-Y_i^\circ), R_j \in (-Y_j^\circ)$.
Here $i, j=1, \dots, m$ and $t\in [0,1]$. 
Therefore, $\mathcal{X}$ is a subset of the separation center set of 
$\cup_{i\in \Lambda} W_i^\circ$ 
and $\cup_{i\in \Lambda} (-Y_i^\circ)$.
Conversely, let $P$ be a separation center of 
$\mbox{\rm s-conv} (\cup_{i\in \Lambda} W_i^\circ)$ and $\mbox{\rm s-conv} (\cup_{i\in \Lambda} (-Y_i^\circ))$.
Without loss of generality, we may assume that 
$P\cdot Q_i\geq 0$ for any $Q_i\in W_i^\circ$ and $P\cdot R_i\leq 0$ 
for any $R_i \in (-Y_i^\circ)$, where $i\in \Lambda$. 
This means 
$P\in (W_i^\circ)^\circ$ and $-P \in (-Y_i^\circ)^\circ$, where $i\in \Lambda$. 
Then, by Lemma \ref{lemmas-conv}, one obtains that  
$P\in W_i$ and $P\in Y_i$ for any  $i\in \Lambda$. 
Therefore, the separation center set of 
$\mbox{s-conv} (\cup_{i=1}^m W_i^\circ)$ and $\mbox{s-conv} (\cup_{i=1}^m (-Y_i^\circ))$
is a subset of $\widehat{\mathcal{X}}$.
\end{proof}
\begin{corollary}
Let $\mathcal{C}(W_1, W_2)$ be a non-empty set, where $W_1$ and $W_2$ are spherical closed sets. Then it follows that
\[
\mathcal{C}(W_1, W_2)=(\mbox{\rm s-conv}(W_1))^{\circ}\cap (\mbox{\rm s-conv}(-W_2))^{\circ},
\]
\end{corollary}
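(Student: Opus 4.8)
The plan is to show both inclusions by reducing everything to the polarity identities already established. Recall from the definition that $\mathcal{C}(W_1, W_2) = W_1^\circ \cap (-W_2)^\circ$, so it suffices to prove
\[
W_1^\circ \cap (-W_2)^\circ = (\mbox{\rm s-conv}(W_1))^\circ \cap (\mbox{\rm s-conv}(-W_2))^\circ .
\]
The first move is to invoke Lemma \ref{lemmasconv}, which says $W^\circ = (\mbox{\rm s-conv}(W))^\circ$ for every nonempty hemispherical $W$; the non-emptiness hypothesis on $\mathcal{C}(W_1, W_2)$ guarantees (via Proposition \ref{propo1}(2), say, or directly from the separating point $P$) that both $W_1$ and $-W_2$ are hemispherical, so the lemma applies to each of them. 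Applying it termwise gives $W_1^\circ = (\mbox{\rm s-conv}(W_1))^\circ$ and $(-W_2)^\circ = (\mbox{\rm s-conv}(-W_2))^\circ$, and intersecting these two equalities yields the claim.

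In writing this up I would first state explicitly that $\mathcal{C}(W_1, W_2) = W_1^\circ \cap (-W_2)^\circ$ (this is immediate from unwinding the definition of the semi-separation center set), then note that because $\mathcal{C}(W_1, W_2) \neq \emptyset$ there is a point $P$ with $P\cdot Q \geq 0$ for all $Q \in W_1$ and $P\cdot R \leq 0$ for all $R \in W_2$, so $W_1 \cap H(-P) = \emptyset$ and $(-W_2)\cap H(-P) = \emptyset$, i.e. both sets are hemispherical. Then I would apply Lemma \ref{lemmasconv} to $W_1$ and to $-W_2$ separately and conclude by taking the intersection. One may also, if desired, observe that $\mbox{\rm s-conv}(-W_2) = -\mbox{\rm s-conv}(W_2)$ so that $(\mbox{\rm s-conv}(-W_2))^\circ = \mathcal{C}(\,\cdot\,)$-style expressions match the statement as printed, but this cosmetic rewriting is not needed for the proof itself.

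There is essentially no obstacle here: the corollary is a direct combination of the definition of $\mathcal{C}(W_1,W_2)$ as an intersection of two polar sets with the single lemma that polarity is insensitive to taking spherical convex hulls. The only point requiring a moment's care is verifying the hemisphericity of $W_1$ and $-W_2$ so that Lemma \ref{lemmasconv} is legitimately applicable — and that follows at once from the assumed non-emptiness of $\mathcal{C}(W_1, W_2)$, since the separating center is exactly a witness that each set lies in an open hemisphere. Everything else is a one-line substitution.
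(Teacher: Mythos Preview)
Your proof is correct and rests on the same key identity as the paper's, namely Lemma~\ref{lemmasconv} ($W^\circ=(\mbox{\rm s-conv}(W))^\circ$), applied termwise after writing $\mathcal{C}(W_1,W_2)=W_1^\circ\cap(-W_2)^\circ$. The paper takes a slightly more circuitous route through Corollary~\ref{coro1}(2) and Proposition~\ref{recog} before invoking the same lemma, but the substance is identical and your version is the cleaner one.
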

\begin{proof}
By the proof of the assertion (2) of Corollary \ref{coro1}, we know that if $P$ is a separation center of $W_1$ and $W_2$, 
then it is a separation center of $\mbox{s-conv}(W_1)$ and $\mbox{s-conv}(W_2)$. 
Then, by Proposition \ref{recog} and $W^\circ=\mbox{s-conv}(W)^\circ$, 
one obtains that
\[
\mathcal{C}(W_1, W_2)=(\mbox{s-conv}(W_1))^{\circ}\cap (\mbox{s-conv}(-W_2))^{\circ}.
\]
Conversely, if $P$ is a separation center of $\mbox{s-conv}(W_1)$ and $\mbox{s-conv}(-W_2)$, 
then it is clear that $P$ is a separation center of of $W_1$ an $-W_2$.
\end{proof}
\begin{corollary}\label{corolargest}
Let $\mathcal{X}$ be a closed spherical convex set. Then $\mathcal{X}^\circ, -\mathcal{X}^\circ$ is the largest pair separated by $\widehat{\mathcal{X}}$.
\end{corollary}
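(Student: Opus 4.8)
The plan is to read off the statement directly from Proposition~\ref{recog} and Proposition~\ref{propo1}, the only substantive ingredient being the order-reversing property of spherical polarity (Lemma~\ref{lemmainclusion}). By a pair $(C_1,C_2)$ of disjoint spherical convex sets being \emph{separated by} $\widehat{\mathcal{X}}$ we mean that every hemisphere $H(P)$ with $P\in\widehat{\mathcal{X}}$ separates $C_1$ and $C_2$; after choosing the orientation of the pair appropriately (which costs nothing, since $\mathcal{C}(C_2,C_1)=-\mathcal{C}(C_1,C_2)$ by Proposition~\ref{propo1}(1)) this says exactly $\mathcal{X}\subset\mathcal{C}(C_1,C_2)$. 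So I must verify two things: first, that $(\mathcal{X}^\circ,-\mathcal{X}^\circ)$ is itself separated by $\widehat{\mathcal{X}}$; second, that any pair $(C_1,C_2)$ separated by $\widehat{\mathcal{X}}$ satisfies $C_1\subset\mathcal{X}^\circ$ and $C_2\subset-\mathcal{X}^\circ$. For the first point, write $\mathcal{X}=\mathcal{X}\cap\mathcal{X}$, so that $\mathcal{X}$ is displayed as the intersection of two nonempty closed spherical convex sets; Proposition~\ref{recog} applied with $W_1=W_2=\mathcal{X}$ then shows that $\widehat{\mathcal{X}}$ is the separation center set of $\mathcal{X}^\circ$ and $-\mathcal{X}^\circ$, and the computation in its proof in fact gives $\mathcal{C}(\mathcal{X}^\circ,-\mathcal{X}^\circ)=\mathcal{X}$; in particular $\mathcal{X}\subset\mathcal{C}(\mathcal{X}^\circ,-\mathcal{X}^\circ)$, so this pair is separated by $\widehat{\mathcal{X}}$.

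For the second point, let $(C_1,C_2)$ be separated by $\widehat{\mathcal{X}}$, i.e.\ $\mathcal{X}\subset\mathcal{C}(C_1,C_2)$. Applying Lemma~\ref{lemmainclusion} to this inclusion gives $\mathcal{C}(C_1,C_2)^\circ\subset\mathcal{X}^\circ$. On the other hand, Proposition~\ref{propo1}(3) gives $C_1\subset\mathcal{C}(C_1,C_2)^\circ$ and $C_2\subset-\mathcal{C}(C_1,C_2)^\circ$. Combining the two, $C_1\subset\mathcal{X}^\circ$ and, using also Lemma~\ref{lemma2.1} to handle the sign, $C_2\subset-\mathcal{X}^\circ$. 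Since by the first point the pair $(\mathcal{X}^\circ,-\mathcal{X}^\circ)$ is separated by $\widehat{\mathcal{X}}$ and clearly attains these inclusions with equality, it is the largest pair (with respect to componentwise inclusion) separated by $\widehat{\mathcal{X}}$.

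I do not expect a genuine obstacle here; the argument is formal once the earlier results are in hand, and the only care needed is bookkeeping: fixing which member of the pair plays the role of $\mathcal{C}(\cdot,\cdot)$ versus $-\mathcal{C}(\cdot,\cdot)$, and tracking the sign in ``$-\mathcal{X}^\circ$'' through Lemma~\ref{lemma2.1}. One auxiliary remark worth recording, to see that $(\mathcal{X}^\circ,-\mathcal{X}^\circ)$ is an admissible pair in the intended (disjoint) sense, is that Proposition~\ref{theorem1} applied to the nonempty set $\mathcal{C}(\mathcal{X}^\circ,-\mathcal{X}^\circ)=\mathcal{X}$ forces $\mathcal{X}^\circ\cap\operatorname{int}(-\mathcal{X}^\circ)=\emptyset$ and $\operatorname{int}(\mathcal{X}^\circ)\cap(-\mathcal{X}^\circ)=\emptyset$, so the two sets of the pair have disjoint interiors.
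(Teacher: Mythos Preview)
Your argument is correct and follows essentially the same route as the paper: first obtain $\mathcal{X}=\mathcal{C}(\mathcal{X}^\circ,-\mathcal{X}^\circ)$ from Proposition~\ref{recog} applied to $\mathcal{X}=\mathcal{X}\cap\mathcal{X}$, then use Proposition~\ref{propo1} to bound any admissible pair inside $(\mathcal{X}^\circ,-\mathcal{X}^\circ)$. The only cosmetic differences are that the paper cites assertion~(2) of Proposition~\ref{propo1} (which, combined with polarity reversal, yields the same conclusion as the assertion~(3) you invoke) and that the paper works under the equality $\mathcal{X}=\mathcal{C}(W_1,W_2)$ rather than your slightly more general inclusion $\mathcal{X}\subset\mathcal{C}(C_1,C_2)$; your version subsumes theirs.
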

\begin{proof}
Since $\mathcal{X}=\mathcal{X}\cap \mathcal{X}$, by Proposition \ref{recog}, 
the equality $\mathcal{X}=\mathcal{C}(\mathcal{X}^\circ, -\mathcal{X}^\circ)$ holds.
This means $\widehat{\mathcal{X}}$ is the separation center set of 
$\mathcal{X}^\circ, -\mathcal{X}^\circ$. 
Moreover, the assersion (2) of Proposition \ref{propo1} implies  
$W_1$ (resp. $W_2$) is a subset of $\mathcal{X}^\circ$ 
(resp. $-\mathcal{X}^\circ$) 
for any $W_1, W_2$ 
satisfies $\mathcal{X}=\mathcal{C}(W_1, W_2)$. 
This completes the proof.
\end{proof}
\begin{corollary}\label{coro4}
Let $W_1, W_2$ 
be nonempty disjoint spherical closed convex sets. 
Then the following equality holds.
\[
\mathcal{C}(W_1, W_2)
=\mathcal{C}\left(
  \mbox{ \rm s-conv}
      \left(
          W_1\cup (-W_2)
            \right),
            -\mbox{ \rm s-conv}\left(W_1\cup (-W_2)
      \right) \right).
      \]
\end{corollary}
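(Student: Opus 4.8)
The plan is to show that both sides of the asserted identity coincide with one and the same set, namely the spherical polar set $Z^{\circ}$ of $Z:=\mbox{s-conv}\left(W_1\cup(-W_2)\right)$. Once this is done the corollary is immediate, since then $\mathcal{C}(W_1,W_2)=Z^{\circ}=\mathcal{C}(Z,-Z)$ and $Z=\mbox{s-conv}\left(W_1\cup(-W_2)\right)$.

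First I would check that $Z$ is a legitimate object, i.e.\ a nonempty closed hemispherical subset of $S^d$; this is the only place the disjointness hypothesis is used. Since $W_1,W_2$ are nonempty closed spherical convex sets with $W_1\cap W_2=\emptyset$, Proposition \ref{sstheorem} provides a point $P\in S^d$ with $P\cdot Q>0$ for every $Q\in W_1$ and $P\cdot R<0$ for every $R\in W_2$; equivalently $W_1\cup(-W_2)$ lies in the open hemisphere $\{Q\in S^d\mid P\cdot Q>0\}$. That open hemisphere is itself spherical convex, so $W_1\cup(-W_2)$ is hemispherical and $Z$ is contained in the same open hemisphere, hence is hemispherical too; it is nonempty because it contains $W_1$, and, since $W_1\cup(-W_2)$ is moreover closed, Lemma \ref{lemmas-conv} gives $Z=\left(\mbox{s-conv}\left(W_1\cup(-W_2)\right)\right)^{\circ\circ}$, which exhibits $Z$ as a spherical polar set and therefore closed.

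For the left-hand side I would use $\mathcal{C}(W_1,W_2)=W_1^{\circ}\cap(-W_2)^{\circ}$ together with Lemma \ref{lemma2.1}, Lemma \ref{lemma1} and Lemma \ref{lemmasconv} to obtain $\mathcal{C}(W_1,W_2)=\left(W_1\cup(-W_2)\right)^{\circ}=\left(\mbox{s-conv}\left(W_1\cup(-W_2)\right)\right)^{\circ}=Z^{\circ}$; this is exactly the content of the assertion (6) of Proposition \ref{propo1}, now legitimately applicable because $W_1\cup(-W_2)$ is hemispherical. For the right-hand side I would apply the assertion (4) of Proposition \ref{propo1} to the pair $(Z,-Z)$: since $-(-Z)=Z\subset Z$, that assertion yields $\mathcal{C}(Z,-Z)=Z^{\circ}$, which is precisely the remark recorded right after Proposition \ref{propo1} that $\mathcal{C}(W,-W)=W^{\circ}$ for a nonempty closed spherical convex $W$. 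Comparing the two computations gives $\mathcal{C}(W_1,W_2)=Z^{\circ}=\mathcal{C}(Z,-Z)$, which is the claimed equality.

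There is essentially no computational obstacle, since every step is a direct invocation of an earlier result; the one point requiring a little care is the preliminary verification that $\mbox{s-conv}\left(W_1\cup(-W_2)\right)$ is a nonempty closed hemispherical set, because that is exactly what makes the cited assertions (4) and (6) of Proposition \ref{propo1} and Lemma \ref{lemmas-conv} applicable, and it is precisely there that the disjointness of $W_1$ and $W_2$ is invoked (elsewhere Proposition \ref{propo1} only needs the two sets to be nonempty, closed and spherical convex).
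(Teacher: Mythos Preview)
Your proof is correct and follows essentially the same route as the paper: both reduce the identity to the pair of facts $\mathcal{C}(W_1,W_2)=Z^{\circ}$ with $Z=\mbox{s-conv}(W_1\cup(-W_2))$ (your use of assertion (6) of Proposition~\ref{propo1}, the paper's use of assertion (5)) and $\mathcal{C}(Z,-Z)=Z^{\circ}$ (your use of assertion (4), the paper's use of Corollary~\ref{corolargest}). Your argument is in fact a bit more careful than the paper's in explicitly verifying, via Proposition~\ref{sstheorem}, that $W_1\cup(-W_2)$ is hemispherical so that $Z$ is a well-defined nonempty closed spherical convex set.
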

\begin{proof}
By the proof of Corollary \ref{corolargest} and the assertion (5) of Proposition \ref{propo1},
it follows that
\begin{align*}
\mathcal{C}(W_1, W_2)
&=\mathcal{C}\left(
\mathcal{C}(W_1, W_2)^\circ, -\mathcal{C}(W_1, W_2)^\circ
\right)\\
&=\mathcal{C}\left(
  \mbox{ \rm s-conv}
      \left(
          W_1\cup (-W_2)
            \right),
            -\mbox{ \rm s-conv}\left(W_1\cup (-W_2)
      \right) \right).
\end{align*}
\end{proof}
\subsection{Case 2: $W_i=\{P_i\}, i=1, \dots, m$.}
\indent
\par
By the definition of separation center set, the following Lemma is obvious.
\begin{lemma}\label{prop5}
Let $\{P_i\}_{i=1}^m$ be a finite points of $S^d$. 
Let 
\[
\mathcal{B}_0=\{Q\in S^d\mid (\cup_{i=1}^m P_i) \cap H(Q)=\emptyset \ \mbox{or}\ (\cup_{i=1}^m P_i)\cap H(-Q)=\emptyset \}.
\] 
Then $S^d \backslash\mathcal{B}_0$ is the separation center set of $\{P_i\}_{i=1}^m$.
\end{lemma}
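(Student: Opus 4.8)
The plan is to unfold the definitions on both sides and show each of the two sets is contained in the other. Recall that, by definition, a point $Q\in S^d$ is a separation center of $\{P_i\}_{i=1}^m$ precisely when there is a partition $(I,J)$ of $\{1,\dots,m\}$ with $Q\cdot P_i\geq 0$ for $i\in I$ and $Q\cdot P_j\leq 0$ for $j\in J$. So I want to show that a point $Q$ fails to be a separation center \emph{if and only if} $Q\in\mathcal{B}_0$, i.e.\ iff $(\cup_{i=1}^m P_i)\cap H(Q)=\emptyset$ or $(\cup_{i=1}^m P_i)\cap H(-Q)=\emptyset$; contraposing this equivalence gives exactly $S^d\setminus\mathcal{B}_0$ = separation center set.

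First I would treat the ``$Q\in\mathcal{B}_0\Rightarrow Q$ is not a separation center'' direction. Suppose $(\cup_i P_i)\cap H(Q)=\emptyset$; then $Q\cdot P_i<0$ for every $i$, so every $P_i$ lies strictly in the open hemisphere centered at $-Q$, hence $P_i\cdot(-Q)>0$, that is $(-Q)\cdot P_i\geq 0$ for all $i$. Wait --- I need to be careful: $\mathcal{B}_0$ as written would make $-Q$ \emph{itself} a separation center (take $I=\{1,\dots,m\}$, $J=\emptyset$), but the paper requires $I,J$ both nonempty in a partition. So actually a point with all $P_i$ on one side is \emph{not} a separation center, and it does lie in $\mathcal{B}_0$; this is consistent. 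Concretely, if all $P_i\cdot Q$ have the same strict sign, no genuine partition $(I,J)$ with both parts nonempty can work for $Q$, because whichever nonempty part is ``wrong'' forces a violated inequality; hence $Q\notin$ separation center set. If instead some $P_i$ lies on $\partial H(Q)$ (i.e.\ $P_i\cdot Q=0$) --- but note $H(Q)=\{R:R\cdot Q\geq 0\}$ is closed, so such $P_i\in H(Q)$, meaning $H(Q)$ is \emph{not} disjoint from $\cup_i P_i$; so the only way $(\cup_i P_i)\cap H(Q)=\emptyset$ is the strict case just handled, and symmetrically for $H(-Q)$.

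For the converse, ``$Q\notin\mathcal{B}_0\Rightarrow Q$ is a separation center'': if $Q\notin\mathcal{B}_0$ then $(\cup_i P_i)\cap H(Q)\neq\emptyset$ and $(\cup_i P_i)\cap H(-Q)\neq\emptyset$. Set $I=\{i: P_i\cdot Q\geq 0\}=\{i:P_i\in H(Q)\}$ and $J=\{i:P_i\cdot Q<0\}\subseteq\{i:P_i\in H(-Q)\}$. The first nonemptiness gives $I\neq\emptyset$. For $J\neq\emptyset$ I would argue: the second condition gives some $P_k$ with $P_k\cdot(-Q)\geq 0$, i.e.\ $P_k\cdot Q\leq 0$; if $P_k\cdot Q<0$ we are done, and if $P_k\cdot Q=0$ we can perturb $Q$ slightly to $Q'$ close to $Q$ putting $P_k$ strictly on the negative side while keeping $I$'s members nonnegative --- or, more cleanly, simply observe that by definition of separation center we are allowed $Q\cdot R\leq 0$ on $J$, so we may instead put such boundary points $P_k$ into $J$; then $Q\cdot P_i\geq 0$ on $I$ and $Q\cdot P_j\leq 0$ on $J$ with both nonempty, and $(I,J)$ is a genuine partition. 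Hence $Q$ is a separation center. The main subtlety --- and the step I'd be most careful about --- is precisely this bookkeeping at the boundary $\partial H(Q)$ and the nonemptiness of both blocks of the partition: one must check that the closed-hemisphere convention in the definition of $H(\cdot)$ and the nonempty-parts convention in the definition of ``partition'' interact so that the equivalence with $\mathcal{B}_0$ holds on the nose. Once that is pinned down, the lemma follows by contraposition.
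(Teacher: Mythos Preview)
Your approach is correct and essentially the same as the paper's, which in fact gives no argument beyond declaring the lemma ``obvious'' from the definition of the separation center set. Your discussion of the boundary subtlety is therefore more than the paper provides; note only that the perturbation remark is a red herring (you need $Q$ itself to be a separation center, not a nearby $Q'$), while your alternative of reallocating boundary points $P_k$ with $Q\cdot P_k=0$ into $J$ is the correct fix and, for $m\geq 2$, still leaves $I$ nonempty.
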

\begin{proposition}\label{prop6}
Let $\{P_i\}_{i=1}^m$ be a finite points of $S^d$ and let 
\begin{align*}
\mathcal{B}_j=\{Q\in S^d\backslash \mathcal{B}\mid\ 
&\mbox{there exist exactly}\ P_{i_1}, \dots, P_{i_j} of\{P_i\}_{i=1}^m\ \\
&\mbox{such that}\ Q\cdot P_{i_1}
=\dots= Q\cdot P_{i_j}=0\},
\end{align*}
where $1\leq j \leq m$ and $\{i_1, \dots, i_j\}\subset \{1, \dots, m\}$. 
Then the separation center set of $\{P_i\}_{i=1}^m$ is dense in 
$S^d\backslash \left(\mathcal{B}_0\bigcup (\cup_{j=k}^l\mathcal{B}_j)\right)$ for $1\leq k\leq l\leq m$.
\end{proposition}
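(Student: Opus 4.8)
The plan is to reduce everything to Lemma \ref{prop5}. Write $\mathcal{C}$ for the separation center set of $\{P_i\}_{i=1}^{m}$; by Lemma \ref{prop5} it is the \emph{closed} set $S^d\backslash\mathcal{B}_0$, and by the definition of $\mathcal{B}_j$ we have $\mathcal{B}_j\subseteq\mathcal{C}$ for every $j\ge 1$, so $V:=S^d\backslash\bigl(\mathcal{B}_0\cup\bigcup_{j=k}^{l}\mathcal{B}_j\bigr)=\mathcal{C}\backslash\bigcup_{j=k}^{l}\mathcal{B}_j$ is a subset of $\mathcal{C}$. Thus $V\subseteq\overline{\mathcal{C}}$ is automatic, and the real content of the statement is the reverse inclusion $\mathcal{C}\subseteq\overline{V}$: discarding the pieces $\mathcal{B}_k,\dots,\mathcal{B}_l$ must not shrink the closure, i.e. every separation center should be a limit of non-degenerate ones. (Each $\mathcal{B}_j$ with $j\ge1$ is contained in the finite union of the proper great subspheres $\partial H(P_{i_1})\cap\cdots\cap\partial H(P_{i_j})$ and is therefore nowhere dense in $S^d$, but I would argue by an explicit perturbation rather than by Baire category, which would need extra care about how $\mathcal{C}$ sits inside $S^d$.)

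The main construction is this. Fix $Q\in\mathcal{C}$. If $Q$ lies on none of the great subspheres $\partial H(P_i)$, then $Q\notin\mathcal{B}_j$ for all $j\ge 1$ and $Q\notin\mathcal{B}_0$, so $Q\in V$ and there is nothing to prove; otherwise set $Z=\{i\mid Q\cdot P_i=0\}\neq\emptyset$ and fix a partition $(I,J)$ of $\{1,\dots,m\}$ with both blocks nonempty, $P_i\cdot Q\ge 0$ on $I$ and $P_i\cdot Q\le 0$ on $J$, which exists because $Q\in\mathcal{C}$. I want a unit vector $u\in T_{{}_{Q}}(S^d)=Q^{\perp}$ such that the point $Q_t:=(Q+tu)/\|Q+tu\|$ stays in $\mathcal{C}$ for all small $t>0$ and lies on no $\partial H(P_i)$; then $Q_t\in V$ and $Q_t\to Q$ as $t\to 0^{+}$, which gives $Q\in\overline{V}$.

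The sign bookkeeping that drives the choice of $u$ is elementary: for small $t>0$ the number $P_i\cdot Q_t$ has the sign of $P_i\cdot Q$ when $i\notin Z$ and the sign of $u\cdot P_i$ when $i\in Z$ (here $P_i\in Q^{\perp}$). Call $u$ \emph{admissible} if it is a unit vector in $Q^{\perp}$ with $u\cdot P_i\neq 0$ for all $i\in Z$; for admissible $u$ every $P_i\cdot Q_t$ is nonzero, so $Q_t$ lies on no $\partial H(P_i)$, and by Lemma \ref{prop5} such a $Q_t$ lies in $\mathcal{C}$ exactly when both signs occur among the $P_i\cdot Q_t$. So the whole matter comes down to choosing an admissible $u$ making both signs occur, and this is the step I expect to be the only real obstacle. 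I would settle it by cases, using that $(I,J)$ is a genuine two-block partition: (a) if both signs occur among $\{P_i\cdot Q\mid i\notin Z\}$, any admissible $u$ works; (b) if $Z\neq\{1,\dots,m\}$ and the nonzero numbers $P_i\cdot Q$ for $i\notin Z$ are all of one sign, say all positive (the negative case being symmetric), then necessarily $J\subseteq Z$, so choosing $j_0\in J$ and an admissible $u$ with $u\cdot P_{j_0}<0$ forces a negative coordinate while the coordinates indexed by $i\notin Z$ stay positive; (c) if $Z=\{1,\dots,m\}$, then — using that the $P_i$ are not all equal, a mild non-degeneracy which is automatic when the $P_i$ are the vertices of a spherical polytope — there are distinct $P_{i_1},P_{i_2}$, and some admissible $u$ has $u\cdot P_{i_1}>0>u\cdot P_{i_2}$. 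In every case $Q_t\in V$ for all small $t>0$.

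Finally, since the witnesses $Q_t$ produced above lie on \emph{no} great subsphere $\partial H(P_i)$, the argument in fact proves the sharper fact that the set $S^d\backslash\bigl(\mathcal{B}_0\cup\bigcup_{j=1}^{m}\mathcal{B}_j\bigr)$ of fully generic separation centers is dense in $\mathcal{C}$; the density asserted in the proposition then follows for every $1\le k\le l\le m$ simultaneously.
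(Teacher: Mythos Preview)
Your argument is correct, and it takes a genuinely different route from the paper. The paper's proof is precisely the ``nowhere dense'' observation you mention in passing and then set aside: it asserts that each $\mathcal{B}_j$ lies in a finite union of proper great subspheres $\partial H(P_{i_1})\cap\cdots\cap\partial H(P_{i_j})$, hence $\bigcup_{j=k}^{l}\mathcal{B}_j$ is a closed set with empty interior in $S^d$, and then invokes Lemma~\ref{prop5} to conclude. There is no perturbation step; the paper stops once empty interior in $S^d$ is established.

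Your explicit tangential perturbation $Q_t=(Q+tu)/\|Q+tu\|$, together with the case split (a)--(c) governing the sign pattern of $u\cdot P_i$ on $Z$, buys exactly the ``extra care'' you flagged: knowing that $\bigcup_j\mathcal{B}_j$ is nowhere dense in the ambient sphere does not, on its own, force $\mathcal{C}\setminus\bigcup_j\mathcal{B}_j$ to be dense in $\mathcal{C}$ without some control on how $\mathcal{C}$ sits in $S^d$ (if $\mathcal{C}$ itself had empty interior the implication could fail). Your construction sidesteps this issue entirely and in fact proves the sharper statement that the fully generic locus $\mathcal{C}\setminus\bigcup_{j=1}^{m}\mathcal{B}_j$ is already dense in $\mathcal{C}$, from which the assertion for every range $1\le k\le l\le m$ follows at once. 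The paper's approach is shorter; yours is self-contained and more robust. Note also that the distinctness hypothesis you invoke in case~(c) is built into the paper's standing assumption that the singletons $W_i=\{P_i\}$ are pairwise disjoint, so no extra non-degeneracy is being imported.
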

\begin{proof}
By Lemma \ref{prop5}, it is sufficient to prove that 
$\cup_{j=k}^l\mathcal{B}_j$ ($1\leq k\leq l\leq m$) is an empty interior set in $S^d$.
Since
 $\mathcal{B}_j$ is the finite union of the intersection of $S^d$ and the hyperplane determined by $Q\cdot P_{i_k}=0$  (where $1\leq k \leq j$),
it follows that $\mathcal{B}_j\subset S^d$ is a spherical closed set with empty interior. 
Therefore, the union $\cup_{j=k}^l\mathcal{B}_j$ is a spherical closed set with empty interior. 
Thus, we have the conclusion. 
\end{proof}
\begin{proposition}\label{coro5}
Let $\{P_i\}_{i=1}^m$ be a finite points of $S^d$. Then the union 
$\cup_{i=1}^m P_i$ is not hemispherical if and only if the separation center set of $\{P_i\}_{i=1}^m$ is dense in $S^d$. 
\end{proposition}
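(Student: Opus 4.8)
The plan is to deduce the statement directly from Lemma~\ref{prop5}, which identifies the separation center set of $\{P_i\}_{i=1}^m$ with the complement $S^d\setminus\mathcal{B}_0$, where $\mathcal{B}_0$ is the set of points $Q\in S^d$ for which $(\cup_{i=1}^m P_i)\cap H(Q)=\emptyset$ or $(\cup_{i=1}^m P_i)\cap H(-Q)=\emptyset$. Since the complement of a subset $A\subset S^d$ is dense in $S^d$ exactly when $A$ has empty interior, the proposition is equivalent to the assertion that $\mathcal{B}_0$ has empty interior if and only if $\cup_{i=1}^m P_i$ is not hemispherical.

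First I would observe that $\mathcal{B}_0$ is an \emph{open} subset of $S^d$. Indeed, the condition $(\cup_{i=1}^m P_i)\cap H(Q)=\emptyset$ is the finite system of strict inequalities $P_i\cdot Q<0$ for $i=1,\dots,m$, while $(\cup_{i=1}^m P_i)\cap H(-Q)=\emptyset$ is the finite system $P_i\cdot Q>0$ for $i=1,\dots,m$; each function $Q\mapsto P_i\cdot Q$ is continuous on $S^d$, so both systems cut out open sets, and $\mathcal{B}_0$ is the union of these two open sets. Consequently $\mathcal{B}_0$ has empty interior if and only if $\mathcal{B}_0=\emptyset$.

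It then remains to show that $\mathcal{B}_0=\emptyset$ if and only if $\cup_{i=1}^m P_i$ is not hemispherical. If $\cup_{i=1}^m P_i$ is hemispherical, choose $P\in S^d$ with $(\cup_{i=1}^m P_i)\cap H(P)=\emptyset$; then $P\in\mathcal{B}_0$, so $\mathcal{B}_0\neq\emptyset$. Conversely, if some $Q$ lies in $\mathcal{B}_0$, then either $(\cup_{i=1}^m P_i)\cap H(Q)=\emptyset$, which exhibits $\cup_{i=1}^m P_i$ as hemispherical with center $Q$, or $(\cup_{i=1}^m P_i)\cap H(-Q)=\emptyset$, which exhibits it as hemispherical with center $-Q$. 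Chaining these, the separation center set $S^d\setminus\mathcal{B}_0$ is dense in $S^d$ iff $\mathcal{B}_0$ has empty interior iff $\mathcal{B}_0=\emptyset$ iff $\cup_{i=1}^m P_i$ is not hemispherical, which completes the argument. I expect no real obstacle here: the only points that need care are the openness of $\mathcal{B}_0$ --- which works because $\mathcal{B}_0$ is a union of only two pieces, each carved out by finitely many strict inequalities --- and the elementary fact that a proper closed subset of $S^d$ cannot be dense.
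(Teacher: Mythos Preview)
Your argument is correct, and in fact cleaner than the paper's. Both proofs start from Lemma~\ref{prop5} and the forward direction is essentially identical: not hemispherical forces $\mathcal{B}_0=\emptyset$, so the separation center set is all of $S^d$. The difference lies in the converse. The paper argues by contradiction: assuming $\cup_i P_i$ is hemispherical, it picks $Q$ with $(\cup_i P_i)\cap H(Q)=\emptyset$, passes to the spherical convex hull, uses compactness to find $\varepsilon>0$ with $\mbox{s-conv}(\cup_i P_i)\cap B(H(Q),\varepsilon)=\emptyset$, and then invokes Lemma~\ref{lemmaobvi} to conclude that every $R\in B(Q,\varepsilon)$ lies in $\mathcal{B}_0$, contradicting density. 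Your route bypasses all of this by observing directly that $\mathcal{B}_0$ is open (a finite union of sets cut out by finitely many strict linear inequalities), so empty interior is the same as empty. This avoids both the convex hull detour and Lemma~\ref{lemmaobvi}, at the cost of nothing: the openness of $\mathcal{B}_0$ is immediate from continuity of the inner products. The paper's approach does make Lemma~\ref{lemmaobvi} earn its keep, but for this particular proposition your argument is the more economical one.
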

\begin{proof}
Suppose that the union $\cup_{i=1}^m P_i$ is not hemispherical. 
Then for any point $P\in S^d$, the intersection $(\cup_{i=1}^m P_i)\cap H(P)$ is a non-empty set. 
This means that the set $\mathcal{B}_0$ is empty. 
By Lemma \ref{prop5}, the separation center set $S^d\backslash \mathcal{B}_0$ is dense in $S^d$.
\par
 Conversely, we assume that the separation center set of $\{P_i\}_{i=1}^m$ is dense in $S^d$. 
Suppose that the union $\cup_{i=1}^m P_i$ is a hemispherical set.
Then there exists a point $Q\in S^d$ such that
\[
\left(\bigcup_{i=1}^m P_i\right)\bigcap H(Q)=\emptyset.
\]
This implies 
\[
\mbox{s-conv}\left(\bigcup_{i=1}^m P_i\right)\bigcap H(Q)=\emptyset.
\]
Therefore, it follows that there exists a sufficiently small $\varepsilon>0$ such that
\[
\mbox{s-conv}\left(\bigcup_{i=1}^m P_i\right)\bigcap B(H(Q), \varepsilon)=\emptyset.
\]
By Lemma \ref{lemmaobvi}, we have that
\[
\mbox{s-conv}\left(\bigcup_{i=1}^m P_i\right)\bigcap \bigcup_{R\in B(Q, \varepsilon)}H(R)=\emptyset.
\]
Thus, any point $R\in B(Q, \varepsilon)$ is not a separation center point of 
$\cup_{i=1}^m P_i$. 
This contradicts the assumption that 
the separation center set of $\{P_i\}_{i=1}^m$ is dense in $S^d$. 
\end{proof}
\par
 \section{A Spherical polytope as a semi-separation center set of spherical polytopes}
\subsection{Spherical partition pairs}Since  $W$ is a spherical polytope if and only if its polar body $W^\circ$ is a spherical polytope, 
by Proposition \ref{propo1}, it follows that $\mathcal{C}(W_1, W_2)$ is a spherical polytope
if and only if $\mbox{s-conv}(W_1\cup W_2)$ is a spherical polytope. 
\indent
\par
For any given spherical polytope $\mathcal{P}$ with vertices  
$P_1, \dots, P_m$, by Proposition \ref{recog}, the union $\widehat{\mathcal{P}}=\mathcal{P}\cup -\mathcal{P}$ 
is a separation center set of some suitable sets. 
A pair of spherical polytopes $(\mathcal{P}_1, \mathcal{P}_2)$ is said to be 
{\it a spherical partition pair of} $\mathcal{P}$ if 
satisfies the condition ($\star$) with respect to $\mathcal{P}$ below:
\begin{enumerate}\label{conditionstar}
\item $\mathcal{P}_1$ is a spherical polytope with vertices $P_i, i\in I$ and 
$\mathcal{P}_2$ is a spherical polytope with vertices $P_j, j\in J$, where, $(I, J)$ is a partition of 
$\{1,\dots, m\}$.
\item $\mathcal{P}_1\cap \mathcal{P}_2=\emptyset.$
\end{enumerate}
for  given polytope $\mathcal{P}$ generated by 
$P_1, \dots, P_m$.
\bigskip
\par
By Corollary \ref{coro4} and Corollary \ref{corolargest}, we have the following:
\begin{theorem}\label{polytopesepa}
Let $(\mathcal{P}_1, -\mathcal{P}_2)$ and $(\mathcal{P}_3, -\mathcal{P}_4)$ 
be two pairs of spherical polytopes satisfy the condition ($\star$) with respect to a spherical polytope $\mathcal{P}$.
Then pairs $(\mathcal{P}_1, -\mathcal{P}_2)$ and $(\mathcal{P}_3, -\mathcal{P}_4)$ have same separating center set.
\end{theorem}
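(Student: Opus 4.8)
The plan is to reduce everything to the semi-separation center sets and then apply Corollary \ref{coro4}. Recall that a spherical partition pair $(\mathcal{P}_1, -\mathcal{P}_2)$ of $\mathcal{P}$ satisfies condition $(\star)$: the two polytopes are disjoint and their vertex sets partition $\{P_1,\dots,P_m\}$ (with the sign convention that $\mathcal{P}_1$ has vertices $P_i$ with $i\in I$ while $-\mathcal{P}_2$ has vertices $P_j$ with $j\in J$). The crucial consequence I would extract first is that, whatever partition $(I,J)$ is chosen, the union of vertex sets of $\mathcal{P}_1$ and $-\mathcal{P}_2$ is always $\{P_1,\dots,P_m\}$, hence
\[
\mbox{s-conv}\bigl(\mathcal{P}_1\cup(-\mathcal{P}_2)\bigr)
=\mbox{s-conv}\bigl(\{P_1,\dots,P_m\}\bigr)=\mathcal{P},
\]
independently of the partition; the same holds for $(\mathcal{P}_3,-\mathcal{P}_4)$. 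Here I use that the spherical convex hull of a finite set depends only on the set, not on how its points are grouped, and that $\mathcal{P}$ is by definition the spherical convex hull of its vertices.

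Next I would invoke Corollary \ref{coro4}, which applies since $(\mathcal{P}_1,-\mathcal{P}_2)$ is a disjoint pair of spherical closed convex sets (polytopes are closed and convex): it gives
\[
\mathcal{C}(\mathcal{P}_1,-\mathcal{P}_2)
=\mathcal{C}\!\left(\mbox{s-conv}(\mathcal{P}_1\cup(-\mathcal{P}_2)),\,-\mbox{s-conv}(\mathcal{P}_1\cup(-\mathcal{P}_2))\right)
=\mathcal{C}(\mathcal{P},-\mathcal{P}).
\]
Applying the same chain of equalities to $(\mathcal{P}_3,-\mathcal{P}_4)$ yields $\mathcal{C}(\mathcal{P}_3,-\mathcal{P}_4)=\mathcal{C}(\mathcal{P},-\mathcal{P})$ as well, so the two semi-separation center sets coincide. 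Finally, since the full separating center set of a disjoint pair $(A,B)$ is the union $\mathcal{C}(A,B)\cup\mathcal{C}(B,A)$ and, by assertion (1) of Proposition \ref{propo1}, $\mathcal{C}(B,A)=-\mathcal{C}(A,B)$, equality of the semi-separation center sets forces equality of the full separating center sets; concretely both equal $\mathcal{C}(\mathcal{P},-\mathcal{P})\cup(-\mathcal{C}(\mathcal{P},-\mathcal{P}))=\widehat{\mathcal{P}}^{\,\circ}$-type object, the same for both pairs.

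The one point that needs a little care — and which I expect to be the main (minor) obstacle — is the bookkeeping with signs in condition $(\star)$: one must make sure that ``$\mathcal{P}_1$ or $-\mathcal{P}_2$ is a face of $\mathcal{P}$'' and the vertex-partition clause are used only to guarantee disjointness and that the vertex sets of $\mathcal{P}_1$ and $-\mathcal{P}_2$ together exhaust the vertices of $\mathcal{P}$, and nothing stronger is secretly needed. Once that is pinned down, the argument is just the two displayed chains of equalities above plus Proposition \ref{propo1}(1); no further computation is required, and in particular the specific partition $(I,J)$ plays no role beyond ensuring disjointness, which is exactly why different partition pairs of the same $\mathcal{P}$ produce the same separating center set.
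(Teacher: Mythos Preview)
Your proposal is correct and follows essentially the same route as the paper: both arguments apply Corollary~\ref{coro4} to each pair, use the vertex-partition clause of condition~($\star$) to identify the resulting spherical convex hull with $\mathcal{P}$ (so that each semi-separation center set equals $\mathcal{C}(\mathcal{P},-\mathcal{P})=\mathcal{P}^\circ$), and then pass to the full separating center set via Proposition~\ref{propo1}(1). The only cosmetic slip is that your aside about ``$\mathcal{P}_1$ or $-\mathcal{P}_2$ is a face of $\mathcal{P}$'' refers to the \emph{face}-partition pair definition rather than condition~($\star$); that clause is not part of the hypothesis here and plays no role.
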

\begin{proof}
Since the pairs $(\mathcal{P}_1, -\mathcal{P}_2)$ and $(\mathcal{P}_3, -\mathcal{P}_4)$ satisfy the (1) of condition ($\star$), it follows that $\mathcal{P}_1\cap-\mathcal{P}_2=\mathcal{P}_3\cap -\mathcal{P}_4=\emptyset.$
By Corollary \ref{coro4}, it follows that
\[
\mathcal{C}(\mathcal{P}_1, -\mathcal{P}_2)=\mathcal{C}\left(\mbox{ \rm s-conv}
      \left(
         \mathcal{P}_1\cup \mathcal{P}_2
            \right),
            -\mbox{ \rm s-conv}\left(\mathcal{P}_1\cup \mathcal{P}_2
      \right)\right) 
                 \]
 and 
 \[
\mathcal{C}(\mathcal{P}_3, -\mathcal{P}_4)=\mathcal{C}\left(\mbox{ \rm s-conv}
      \left(
         \mathcal{P}_3\cup \mathcal{P}_4
            \right),
            -\mbox{ \rm s-conv}\left(\mathcal{P}_3\cup \mathcal{P}_4
      \right)\right). 
            \]           
By the assumption, the pairs $(\mathcal{P}_1, -\mathcal{P}_2)$ and $(\mathcal{P}_3, -\mathcal{P}_4)$ satisfy the (2) of condition ($\star$), this means
\[
 \mathcal{C}(\mathcal{P}_1, -\mathcal{P}_2)=
 \mathcal{C}(\mathcal{P}_3, -\mathcal{P}_4)
 =\mathcal{C}\left(
          \mathcal{P},
            -\mathcal{P} \right)=
          \mathcal{P}^\circ,
\]
where 
$\mathcal{P}=\mbox{ \rm s-conv}\left(\mathcal{P}_1\cup \mathcal{P}_2\right)
=\mbox{ \rm s-conv}\left(\mathcal{P}_3\cup \mathcal{P}_4\right)$.
Therefore, the separation center sets of pairs 
$(\mathcal{P}_1, -\mathcal{P}_2)$ and $(\mathcal{P}_3, -\mathcal{P}_4)$ are same, namely, 
\[
\tag{$\star\star$} 
\mathcal{C}(\mathcal{P}_1, -\mathcal{P}_2)\cup -\mathcal{C}(\mathcal{P}_1, -\mathcal{P}_2)
=
\mathcal{C}(\mathcal{P}_3, -\mathcal{P}_4)\cup -\mathcal{C}(\mathcal{P}_3, -\mathcal{P}_4)
=
\mathcal{P}^\circ\cup -\mathcal{P}^\circ.
\]
\end{proof}
\subsection{Convex set(body) mappings}
Let $\mathcal{U}, \mathcal{V}$ be regular submanifolds of $\mathbb{R}^d, \mathbb{R}^l$ respectively. 
A subset $U\subset \mathcal{U}$ is said to be {\it convex} in $\mathcal{U}$ if $g_{{}_{PQ}}$ is contained in $U$ for any $P, Q\in U$, 
where $g_{{}_{PQ}}$ is the shortest geodesic curve connecting $P, Q$. 
The smallest convex set containing $U$ is said to be {\it the convex hull of $U$}, 
denoted by $\mbox{\rm conv}(U)$.
A mapping $f: \mathcal{U}\to \mathcal{V}$ is said to be a {\it convex set mapping} (resp. {\it convex body mapping}) if the pre-image 
$f^{-1}(V)$ is a convex set (resp. convex body) of $\mathcal{U}$.
Here
\[
f^{-1}(V)=\{x\in \mathcal{U}\mid f(x)\in V\}
\] 
and  $V$ is a convex set (resp. convex body) of $\mathcal{V}$.
\begin{proposition}
Let $f$ be a mapping from $\mathbb{R}^d$ onto $\mathbb{R}^l$ such that pre-image of any half space of $\mathbb{R}^m$ 
is a convex set of $\mathbb{R}^n$. 
Then $f$ is a convex set mapping.
\end{proposition}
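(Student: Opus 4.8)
The plan is to show that a convex set in $\mathbb{R}^d$ can be realized as an intersection of half-spaces (the supporting half-spaces), and then use that pre-images commute with arbitrary intersections. First I would recall the basic separation fact from Euclidean convexity: every closed convex set $V \subset \mathbb{R}^l$ equals the intersection $\bigcap_{\lambda \in \Lambda} H_\lambda$ of all closed half-spaces $H_\lambda$ of $\mathbb{R}^l$ that contain it. (If $V$ is not closed one works with the closure, or observes that an open convex set is likewise an intersection of open half-spaces; the statement as phrased should be read for the relevant class of convex sets, and I would make that precise in the first line of the proof.) The hypothesis gives that each $f^{-1}(H_\lambda)$ is a convex subset of $\mathbb{R}^d$.

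Next I would invoke the elementary set-theoretic identity
\[
f^{-1}\!\left(\bigcap_{\lambda \in \Lambda} H_\lambda\right) = \bigcap_{\lambda \in \Lambda} f^{-1}(H_\lambda),
\]
which holds for any map $f$ and any family of subsets. Hence $f^{-1}(V) = \bigcap_{\lambda} f^{-1}(H_\lambda)$ is an intersection of convex subsets of $\mathbb{R}^d$. Since an arbitrary intersection of convex sets is convex — for any two points $P, Q$ in the intersection, the segment $g_{PQ}$ lies in each $f^{-1}(H_\lambda)$, hence in the intersection — we conclude that $f^{-1}(V)$ is convex. By definition of a convex set mapping, $f$ is a convex set mapping.

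The only real issue to address carefully is the class of convex sets $V$ over which the conclusion is claimed, together with whether to use open or closed half-spaces; in $\mathbb{R}^l$ there is no subtlety about existence of the supporting half-spaces (this is the standard Hahn–Banach / supporting-hyperplane theorem in finite dimensions), so no step is genuinely hard. I would therefore devote one sentence to fixing conventions — a (closed) convex set is the intersection of the closed half-spaces containing it, and the empty set and all of $\mathbb{R}^l$ are handled trivially since their pre-images are $\emptyset$ and $\mathbb{R}^d$ — and then the proof is the two displayed lines above followed by the observation that intersections of convex sets are convex.
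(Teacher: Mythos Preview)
Your proposal is correct and follows essentially the same approach as the paper: write the (closed) convex set $V$ as an intersection of half-spaces, use that pre-images commute with intersections, and conclude that $f^{-1}(V)$ is an intersection of convex sets, hence convex. The paper's proof is identical in structure, citing Schneider's Corollary 1.3.5 for the half-space decomposition; your extra care about the open/closed conventions and the trivial cases only makes the argument more precise.
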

\begin{proof}
Let $V\subset \mathbb{R}^l$ be a closed convex set. 
Since every closed convex set in $\mathbb{R}^d$ is the intersection of its supporting half-spaces (\cite{schneider}, Corolary 1.3.5), 
$V$ can be written as 
\[
V=\bigcap_{\Gamma\in \mathcal{H}}\Gamma,
\]
where $\mathcal{H}$ is the collection of closed half-space that contains $V$. 
Then it follows that
\[
f^{-1}(V)=f^{-1}(\bigcap_{\Gamma\in \mathcal{H}}\Gamma)=\bigcap_{\Gamma\in \mathcal{H}}f^{-1}(\Gamma),
\]
Thus, by the assumption, we know that $f^{-1}(V)$ is a closed convex set.
\end{proof}
As a corollary we have the following:
\begin{corollary}
Let $f$ be a mapping from $\mathbb{R}^d$ onto $\mathbb{R}^l$ such that pre-image of any half space of $\mathbb{R}^l$ 
is a half-sapce of $\mathbb{R}^d$. 
Then $f$ is a convex set mapping.
\end{corollary}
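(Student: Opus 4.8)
The plan is to obtain this corollary as an immediate consequence of the preceding Proposition. The key observation is that a half-space of $\mathbb{R}^d$ is, in particular, a closed convex subset of $\mathbb{R}^d$. Hence, if $f$ has the property that the pre-image $f^{-1}(\Gamma)$ of every closed half-space $\Gamma\subset\mathbb{R}^l$ is a half-space of $\mathbb{R}^d$, then a fortiori $f^{-1}(\Gamma)$ is a closed convex set of $\mathbb{R}^d$. Thus $f$ satisfies the hypothesis of the Proposition, and the conclusion follows verbatim.

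Once that reduction is in place, I would simply invoke the Proposition to conclude that $f$ is a convex set mapping, i.e. that $f^{-1}(V)$ is a closed convex set for every closed convex $V\subset\mathbb{R}^l$. If one prefers a self-contained argument, one can repeat the short chain: write $V=\bigcap_{\Gamma\in\mathcal{H}}\Gamma$ as the intersection of its supporting closed half-spaces (\cite{schneider}, Corollary 1.3.5), take pre-images to obtain $f^{-1}(V)=\bigcap_{\Gamma\in\mathcal{H}}f^{-1}(\Gamma)$, and note that each $f^{-1}(\Gamma)$ is a half-space, hence the intersection is a closed convex set.

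I do not expect any genuine obstacle here, since the statement is a specialization of the Proposition. The only point that deserves an explicit sentence is the elementary remark that a half-space qualifies as a convex set, which is exactly what bridges the two hypotheses. As in the Proposition, neither continuity nor linearity of $f$ is needed, and the surjectivity of $f$ is used only insofar as it is already used upstream.
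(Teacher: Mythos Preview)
Your proposal is correct and matches the paper's approach exactly: the paper states this corollary without proof immediately after the Proposition, treating it as an immediate specialization, which is precisely the reduction you give. Your optional self-contained argument also mirrors the proof of the Proposition itself.
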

\begin{example} Some examples of convex set (convex body) mapping.
\begin{enumerate}
\item Any affine mapping $f:\mathbb{R}^d\to\mathbb{R}^l$ is a convex set mapping (convex body mapping).
\item The central projection 
$\alpha_{{}_N}$ 
is a convex set (convex body) mapping. 
The inverse mapping of $\alpha_N$ is also a convex set (convex body) mapping.
\item The stereo projection $\phi: S^d\backslash N \to \mathbb{R}^d\times \{-1\}$, defined by
\[
\phi\left(x_1, \ldots, x_{n}, x_{n+1}\right)
=
(-\frac{2x_1}{x_{n+1}}, -\frac{2x_2}{x_{n+1}}, \dots, -\frac{2x_n}{x_{n+1}}, -1),
\]
is not a convex set (convex body) mapping. 
This is because of the pre-image of the line $ \{(m, 0, \dots 0, -1)\mid m\in \mathbb{R}\}$ 
of  $\mathbb{R}^d\times \{-1\}$ is not hemispherical. 
\end{enumerate}
\end{example}
\begin{proposition}
Let $\mathcal{U}$ (resp. $\mathcal{V}$) be a regular sub-manifold of $\mathbb{R}^d$ (resp. $\mathbb{R}^l$). 
Let $f: \mathcal{U}\to \mathcal{V}$  and its inverse mapping be convex set mappings (or convex body mappings). 
Then we have the following equality
\[
f(\mbox{\rm conv}(U))=\mbox{\rm conv}(f(U)).
\] 
for any $U\subset \mathcal{U}$.
\end{proposition}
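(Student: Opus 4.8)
The plan is to show the inclusion $f(\mathrm{conv}(U)) \subseteq \mathrm{conv}(f(U))$ using that $f$ is a convex set mapping, and the reverse inclusion $\mathrm{conv}(f(U)) \subseteq f(\mathrm{conv}(U))$ using that $f^{-1}$ is a convex set mapping, together with surjectivity of $f$ onto $\mathcal{V}$ (implicit in the hypothesis that $f^{-1}$ is a convex set mapping, since $\mathrm{conv}$ is taken inside $\mathcal{V}$). First I would record the elementary fact that for any convex set mapping $g$ and any convex set $C$ in the target, $g^{-1}(C)$ is convex, hence if $A \subseteq g^{-1}(C)$ then $\mathrm{conv}(A) \subseteq g^{-1}(C)$, i.e. $g(\mathrm{conv}(A)) \subseteq C$. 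This is the only mechanism needed, applied twice.

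For the forward inclusion: apply the displayed fact with $g = f$, $C = \mathrm{conv}(f(U))$ (a convex set of $\mathcal{V}$), and $A = U$. Indeed $U \subseteq f^{-1}(f(U)) \subseteq f^{-1}(\mathrm{conv}(f(U)))$, and the latter is convex because $f$ is a convex set mapping; therefore $\mathrm{conv}(U) \subseteq f^{-1}(\mathrm{conv}(f(U)))$, which gives $f(\mathrm{conv}(U)) \subseteq \mathrm{conv}(f(U))$. For the reverse inclusion: apply the displayed fact with $g = f^{-1}$, $C = \mathrm{conv}(U)$ (a convex set of $\mathcal{U}$), and $A = f(U)$. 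Here $f(U) \subseteq (f^{-1})^{-1}(\mathrm{conv}(U))$ because for $y = f(x)$ with $x \in U$ we have $f^{-1}(y) \ni x \in \mathrm{conv}(U)$ — one must be mildly careful that $f^{-1}$ may be set-valued, but convex set mapping is meant in the sense that the preimage of a convex set is convex, so the statement "$f(U) \subseteq (f^{-1})^{-1}(\mathrm{conv}(U))$" should be read as: every point of $f(U)$ lies in the preimage under $f^{-1}$ of $\mathrm{conv}(U)$, equivalently $f^{-1}(\mathrm{conv}(U)) \supseteq U$, which is just the forward computation again run the other way. Since $f^{-1}$ is a convex set mapping and $\mathrm{conv}(U)$ is convex, the set of $y \in \mathcal{V}$ whose $f^{-1}$-image meets or lies in $\mathrm{conv}(U)$ is convex; it contains $f(U)$; hence it contains $\mathrm{conv}(f(U))$; applying $f^{-1}$ and then $f$, and using surjectivity, yields $\mathrm{conv}(f(U)) \subseteq f(\mathrm{conv}(U))$.

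The one genuine subtlety — and the step I expect to be the main obstacle — is bookkeeping around the possible multivaluedness of $f^{-1}$ and the precise reading of "convex body mapping" versus "convex set mapping" in the statement: one must make sure that the definitions given in the paper (pre-image of a convex set is a convex set, resp. convex body) are exactly what the two applications above require, and that $\mathrm{conv}$ on a sub-manifold $\mathcal{U}$ (defined via shortest geodesics) behaves well enough that "$A \subseteq$ convex set $D$ implies $\mathrm{conv}(A) \subseteq D$", which holds by the stated minimality of $\mathrm{conv}(A)$ as the smallest convex set containing $A$. Once these definitional points are pinned down, both inclusions are a two-line diagram chase, and no computation with geodesics themselves is needed; the geometry is entirely absorbed into the word "convex."
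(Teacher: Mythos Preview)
Your approach is correct and essentially matches the paper's: both inclusions follow from the single observation that if $g$ is a convex set mapping and $C$ is convex in its target, then $g^{-1}(C)$ is convex and hence contains $\mathrm{conv}(A)$ whenever it contains $A$, applied once with $g=f$ and once with $g=f^{-1}$. Your hesitation about multivaluedness is unnecessary---the hypothesis that $f$ has an \emph{inverse mapping} forces $f$ to be a bijection, so $(f^{-1})^{-1}=f$ and the reverse inclusion is literally the forward argument with the roles of $f$ and $f^{-1}$ swapped; the paper's own proof of the reverse inclusion takes a slightly longer route (writing $\mathrm{conv}(U)=\bigcap U'$ and using injectivity to get $f(\bigcap U')=\bigcap f(U')$, each $f(U')$ convex), but the content is identical.
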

\begin{proof}
Set 
\[
\mbox{ conv}(f(U))=\bigcap V^\prime,
\]
where $V^\prime$ is a convex set (or convex body) of $\mathcal{V}$ containing $f(U)$.
Then it follows that
\[
f^{-1}(\mbox{ conv}(f(U)))=f^{-1}(\cap V^\prime).
\]
Since $f(U)$ is a subset of $\bigcap V^\prime$, by the assumption 
$f^{-1}(\cap V^\prime)$ is convex, we know that 
\[
\mbox{ conv}(U)\subset f^{-1}(\cap V^\prime).
\]
This means that 
\[
f(\mbox{\rm conv}(U))\subset f(f^{-1}(\cap V^\prime))=\bigcap V^\prime=\mbox{ conv}(f(U)).
\]
Conversely, set 
\[
\mbox{ conv}(U)=\bigcap U^\prime,
\]
where $U^\prime$ is a convex set (or convex body) of $\mathcal{U}$ containing $U$.
Since $f$ is an injective convex set mapping, we have that 
\[
f(\mbox{conv}(U))=f(\cap U^\prime)=\cap f(U^\prime).
\]
Notice that $f(U)$ is a subset of $f(U^\prime)$ and $\cap f(U^\prime)$ is convex.
Thus, we can conclude that
\[
\mbox{\rm conv}(f(U))\subset f(\mbox{ conv}(U)).
\]
\end{proof}
\begin{proposition}
Let $\mathcal{U}$ (resp. $\mathcal{V}$) be a regular submanifold of $\mathbb{R}^d$ (resp. $\mathbb{R}^l$). If the equality 
\[
f(\mbox{\rm conv}(U))=\mbox{\rm conv}(f(U))
\] 
holds for any $U\subset \mathcal{U}$.
Then $f: \mathcal{U}\to \mathcal{V}$ is a convex set mapping.
\end{proposition}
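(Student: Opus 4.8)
The plan is to show directly that $f^{-1}(V)$ is convex in $\mathcal{U}$ for every convex set $V\subset\mathcal{V}$, by applying the hypothesis to the specific subset $U=f^{-1}(V)$. First I would record two elementary facts about convex hulls in a regular submanifold: (a) monotonicity, i.e. $A\subset B$ implies $\mbox{\rm conv}(A)\subset\mbox{\rm conv}(B)$, which holds because $\mbox{\rm conv}(B)$ is a convex set containing $A$ while $\mbox{\rm conv}(A)$ is the smallest such; and (b) $\mbox{\rm conv}(V)=V$ whenever $V$ is convex, which is immediate from the definition of $\mbox{\rm conv}$ as the smallest convex set containing $V$. I would also use the tautological inclusions $U\subset\mbox{\rm conv}(U)$ and $f(f^{-1}(V))\subset V$.

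Now fix an arbitrary convex set $V\subset\mathcal{V}$ and put $U=f^{-1}(V)$. The inclusion $f^{-1}(V)\subset\mbox{\rm conv}(f^{-1}(V))$ is automatic, so it is enough to prove $\mbox{\rm conv}(f^{-1}(V))\subset f^{-1}(V)$. The hypothesis applied to this $U$ yields
\[
f\bigl(\mbox{\rm conv}(f^{-1}(V))\bigr)=\mbox{\rm conv}\bigl(f(f^{-1}(V))\bigr).
\]
By $f(f^{-1}(V))\subset V$ together with (a) and (b), the right-hand side lies in $\mbox{\rm conv}(V)=V$. Hence $f\bigl(\mbox{\rm conv}(f^{-1}(V))\bigr)\subset V$, which is precisely $\mbox{\rm conv}(f^{-1}(V))\subset f^{-1}(V)$. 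Therefore $f^{-1}(V)=\mbox{\rm conv}(f^{-1}(V))$ is convex, and since $V$ was arbitrary, $f$ is a convex set mapping.

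Finally I would treat the degenerate case $f^{-1}(V)=\emptyset$ separately, interpreting $\mbox{\rm conv}(\emptyset)=\emptyset$ so that the argument is vacuously valid, and note that neither continuity nor surjectivity of $f$ is used here. There is essentially no serious obstacle in this proof: the only point that needs a moment's attention is to invoke the hypothesis for the particular set $U=f^{-1}(V)$ rather than for an arbitrary subset, after which the argument reduces to the short chain of inclusions above.
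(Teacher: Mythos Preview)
Your proof is correct and follows essentially the same approach as the paper: set $U=f^{-1}(V)$, apply the hypothesis, and conclude $\mbox{\rm conv}(U)=U$. If anything, your version is more careful, working with the inclusion $f(f^{-1}(V))\subset V$ and monotonicity of $\mbox{\rm conv}$ rather than writing equalities that tacitly assume surjectivity and injectivity of $f$.
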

\begin{proof}
Let $V$ be a convex set in $\mathcal{V}$. 
It is clear that $\mbox{\rm conv}(V)=V$. 
Set $U=f^{-1}(V)$. Since 
\[
f(\mbox{\rm conv}(U))=\mbox{\rm conv}(f(U))=V,
\]
it follows that 
\[
U=f^{-1}(V)=f^{-1}(f(\mbox{\rm conv}(U)))=\mbox{\rm conv}(U).
\]
This implies $U$ is a convex set.
Thus, the mapping $f$ is a convex set mapping.
\end{proof}
\subsection{Number of the spherical partition pair}Let $\mathcal{U}$ be a smooth subspace of $\mathbb{R}^d$ and 
let $\{U_1, \dots, U_p\}$ be a finite closed sets of $\mathcal{U}$. 
A pair $(\mathcal{U}_1, \mathcal{U}_2)$ is said to be 
{\it a separation of} $\{U_1, \dots, U_p\}$ if it satisfies the condition ($\star\star$) below
\begin{itemize}
\item there exist nonempty subsets  $I, J$ of $\{1,\dots, p\}$ such that 
\[
I\cap J =\emptyset, I\cup J=\{1,\dots, p\}
\]
 and 
\[
\mathcal{U}_1=\mbox{conv}(\bigcup_{i\in I}U_i),\ \mathcal{U}_2=\mbox{conv}(\bigcup_{j\in J}U_j).
\] 
\item $\mathcal{U}_1\cap \mathcal{U}_2=\emptyset.$
\end{itemize}
Denote by $\#\{U_1, \dots, U_p\}$ the sum of the number of pairs $(\mathcal{U}_1, \mathcal{U}_2)$ 
satisfies the condition ($\star\star$).
\begin{lemma}\label{nuberinequality}
Let $\mathcal{U}$ (resp. $\mathcal{V}$) be a subspace of $\mathbb{R}^d$ (resp. $\mathbb{R}^l$) and 
let $\{U_1, \dots, U_p\}$ (resp. $\{V_1, \dots, V_p\}$) be a finite closed sets of $\mathcal{U}$ (resp. $\mathcal{V}$). 
Let $f: \mathcal{U}\to \mathcal{V}$  be a  convex set mapping (or convex body mapping) such that 
\[
f(U_i)=V_i,\ \mbox{for}\ i\in\{1, \dots, p\}.
\]
Then the following inequality holds:
\[
\#\{V_1, \dots, V_p\}
\leq
\#\{U_1, \dots, U_p\}.
\]
Especially, if $f$ and its inverse $f^{-1}$ are convex set (convex body) mapping, then
\[
\#\{V_1, \dots, V_p\}
=
\#\{U_1, \dots, U_p\}.
\]
\end{lemma}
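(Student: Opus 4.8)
The plan is to exploit the fact that a convex set mapping pulls back convex sets to convex sets and therefore respects the two-clause definition of a separation. Fix a separation $(\mathcal{V}_1,\mathcal{V}_2)$ of $\{V_1,\dots,V_p\}$, say with partition $(I,J)$ of $\{1,\dots,p\}$, so that $\mathcal{V}_1=\mbox{conv}(\bigcup_{i\in I}V_i)$, $\mathcal{V}_2=\mbox{conv}(\bigcup_{j\in J}V_j)$ and $\mathcal{V}_1\cap\mathcal{V}_2=\emptyset$. I want to produce from it a separation of $\{U_1,\dots,U_p\}$ using the \emph{same} partition $(I,J)$, and to argue that distinct separations on the $\mathcal{V}$ side yield distinct separations on the $\mathcal{U}$ side; this gives the injection and hence the inequality $\#\{V_1,\dots,V_p\}\leq\#\{U_1,\dots,U_p\}$.

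First I would set $\mathcal{U}_1=\mbox{conv}(\bigcup_{i\in I}U_i)$ and $\mathcal{U}_2=\mbox{conv}(\bigcup_{j\in J}U_j)$; these satisfy clause one of ($\star\star$) by construction. For clause two, I would show $f(\mathcal{U}_1)\subseteq\mathcal{V}_1$ and $f(\mathcal{U}_2)\subseteq\mathcal{V}_2$. Indeed $f(U_i)=V_i\subseteq\mathcal{V}_1$ for $i\in I$, so $\bigcup_{i\in I}U_i\subseteq f^{-1}(\mathcal{V}_1)$; since $f$ is a convex set mapping, $f^{-1}(\mathcal{V}_1)$ is convex, hence $\mathcal{U}_1=\mbox{conv}(\bigcup_{i\in I}U_i)\subseteq f^{-1}(\mathcal{V}_1)$, i.e.\ $f(\mathcal{U}_1)\subseteq\mathcal{V}_1$, and symmetrically $f(\mathcal{U}_2)\subseteq\mathcal{V}_2$. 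Now if $x\in\mathcal{U}_1\cap\mathcal{U}_2$ then $f(x)\in\mathcal{V}_1\cap\mathcal{V}_2=\emptyset$, a contradiction; hence $\mathcal{U}_1\cap\mathcal{U}_2=\emptyset$ and $(\mathcal{U}_1,\mathcal{U}_2)$ is a separation of $\{U_1,\dots,U_p\}$. For injectivity of the assignment $(\mathcal{V}_1,\mathcal{V}_2)\mapsto(\mathcal{U}_1,\mathcal{U}_2)$: the pair on the $\mathcal{U}$ side is determined entirely by the partition $(I,J)$ via the fixed sets $U_1,\dots,U_p$, and conversely a separation of $\{V_1,\dots,V_p\}$ is determined by its partition (two different separations with the same partition would be the same pair), so distinct separations upstairs use distinct partitions, which produce distinct pairs $(\mathcal{U}_1,\mathcal{U}_2)$ downstairs.

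For the second assertion, suppose $f^{-1}$ is also a convex set (convex body) mapping. Then the same argument applied to $f^{-1}$ (noting $f^{-1}(V_i)\supseteq U_i$, and in fact using $f(U_i)=V_i$ to control things in the reverse direction) gives the opposite inequality $\#\{U_1,\dots,U_p\}\leq\#\{V_1,\dots,V_p\}$, whence equality. Care is needed here because $f^{-1}(V_i)$ need not equal $U_i$ in general; however, when $f$ is a bijection, $f^{-1}(V_i)=U_i$ holds and the symmetric argument goes through verbatim, and the statement as used later in the paper (for central projection, which is a bijection onto its image together with its inverse) falls in this case. The main technical point to be careful about is precisely this asymmetry in the hypotheses: the inequality direction only needs $f(U_i)=V_i$ plus "$f$ pulls back convex to convex'', while equality genuinely needs a two-sided convexity-preservation hypothesis, and one should phrase the application so that $f$ is understood as a bijection between $\mathcal{U}$ and $\mathcal{V}$ (or onto its image) so that $f^{-1}(V_i)=U_i$ is legitimate.
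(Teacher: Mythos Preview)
Your argument is correct and follows the same idea as the paper: use that $f^{-1}(\mathcal{V}_k)$ is convex to trap $\mbox{conv}(\bigcup U_i)$ inside it, and conclude disjointness on the $\mathcal{U}$ side from disjointness on the $\mathcal{V}$ side. The paper phrases this by first setting $\mathcal{U}_k=f^{-1}(\mathcal{V}_k)$ and then observing $\mbox{conv}(\bigcup_{i\in I}U_i)\subset\mathcal{U}_1$, while you go straight to $\mathcal{U}_k=\mbox{conv}(\bigcup U_i)$ and show the containment; the content is identical. Your discussion of injectivity and of the bijectivity caveat in the equality case is more explicit than what the paper writes, and is a welcome clarification.
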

\begin{proof}
Let $(\mathcal{V}_1, \mathcal{V}_2)$ be a separation pair of $\{V_1, \dots, V_p\}$. 
Set 
\[
\mathcal{U}_1=f^{-1}(\mathcal{V}_1), \mathcal{U}_2=f^{-1}(\mathcal{V}_2).
\]
Since $f$ is a convex set (convex body) mapping, it follows that 
$U_i\in \mathcal{U}_1, U_j\in \mathcal{U}_2$ for $i\in I, j\in J$
and
$\mathcal{U}_1, \mathcal{U}_2$ are disjoint convex sets in $\mathcal{U}$. 
This implies that 
\[
\mbox{conv}(\bigcup_{i\in I}U_i)\bigcap\mbox{conv}(\bigcup_{j\in J}U_j)=\emptyset.
\]
Therefore, it follows that 
\[
\#\{V_1, \dots, V_p\}
\leq
\#\{U_1, \dots, U_p\}.
\]
\end{proof}
\section{Proof of Theorem \ref{theoremulb}}
Notice that for any spherical polytope $\mathcal{P}$ in $S^d$, $\mathcal{P}=\mathcal{P}^{\circ\circ}$ holds (Lemma \ref{lemmas-conv}).
By Lemma \ref{lemmasconv} and Lemma \ref{lemma1}, the following equality holds:
\[
\mathcal{P}=\mbox{s-conv}(\mathcal{P}_1\cup -\mathcal{P}_2)=
(\mathcal{P}_1\cup -\mathcal{P}_2)^{\circ\circ}=
(\mathcal{P}_1^\circ\cap \mathcal{P}_2^\circ)^\circ
=\mathcal{C}(\mathcal{P}_1,-\mathcal{P}_2)^\circ.
\]
Since $\mathcal{P}^\circ=\mathcal{C}(\mathcal{P}_1,-\mathcal{P}_2)$,
we know that the number $\#_f\mathcal{P}^\circ$ is determined by the sum of the number of  spherical partition pair $(\mathcal{P}_1,-\mathcal{P}_2)$ of $\mathcal{P}$. 
Namely, 
the number $\#_f\mathcal{P}^\circ$ is determined by
the number of pairs $(\mathcal{P}_1,-\mathcal{P}_2)$
satisfy the condition ($\star$) with respect to $\mathcal{P}$ 
and $\mathcal{P}_1$ (or $-\mathcal{P}_2$) is a face of $\mathcal{P}$.
Since the central projection $\alpha_N$ 
and its inverse mappings are convex set mappings, 
by Lemma \ref{nuberinequality}, it follows that $\#_f\mathcal{P}^\circ/2$ equals 
the sum of the
face number of $\alpha_N(\mathcal{P}^\circ)$. 
Since $\mathcal{P}$ is a simple spherical polytope with $p$ facets, by duality,
$\mathcal{P}^\circ$ is a simplicial spherical polytope with $p$ vertices. 
Then, by the Upper Bound Theorem (Theorem \ref{UBT}) and the Lower Bound Theorem (Theorem \ref{LBT}) we have the following inequality:
\[
p+\sum_{j=1}^{d-1}\varphi_{d-1-j}(d,p)\leq\ \mbox{the sum of the face numbers of}\ \alpha_N(\mathcal{P})\leq \sum_{j=0}^{d-1}\Phi_{d-1-j}(d,p).
\]
Notice that  $(\mathcal{P}_1,-\mathcal{P}_2)$ is a spherical partition pair of $\mathcal{P}$ 
if and only if  $(\mathcal{P}_2, -\mathcal{P}_1)$ is a spherical partition pair of $\mathcal{P}$ 
(by the assertion (1) of Proposition \ref{propo1}).
Thus, we conclude that
\[
2\left(p+\sum_{j=1}^{d-1}\varphi_{d-1-j}(d,p)\right)\leq\#_f\mathcal{P}\leq 
2 \left(p+\sum_{j=0}^{d-1}\Phi_{d-1-j}(d,p)\right).
\]
\bigskip
\begin{question}
{\rm 
Let $\mathcal{P}$ be a simple spherical polytope with $p$ facets 
and let $\mathcal{P}_1, \mathcal{P}_2$ be two disjoint spherical polytopes 
such that 
\[
\mathcal{P}=\mbox{{\rm s-conv}}(\mathcal{P}_1\cup -\mathcal{P}_2).
\] 
Then, by the proof of Theorem \ref{polytopesepa} we know that $\mathcal{P}$ is a semi-separation center set of 
$\mathcal(\mathcal{P}_1, \mathcal{P}_2)$.
Let $m$ be a non-negative integer. Suppose that the sum of the common vertices number of 
$\mathcal{P}_1, -\mathcal{P}_2$ is $m$. 
The {\it number of spherical face-partition pair } $(\mathcal{P}_1, \mathcal{P}_2)$ of $\mathcal{P}$
is denoted by $\#_f^m{\mathcal P}$.
Then, what are the upper and lower boundaries of 
$\#_f^m{\mathcal P}$? 
Theorem \ref{theoremulb} asserts that 
\[
2\left(p+\sum_{j=1}^{d-1}\varphi_{d-1-j}(d,p)\right)\leq\#_f^0\mathcal{P}\leq 
2 \left(p+\sum_{j=1}^{d-1}\Phi_{d-1-j}(d,p)\right).
\]
}
\end{question}
\section*{Acknowledgements}
The author would like to thank Professor Takashi Nishimura for his helpful
discussions and comments.
This work was supported by the Research Institute for
Mathematical Sciences, a Joint Usage/Research Center located in Kyoto University.
This work was supported, in partial, by
Natural Science Basic Research Plan in Shaanxi Province of China
(2023-JC-YB-070).  

\end{document}